\documentclass[12pt]{amsart}

\usepackage{amsthm,amssymb,amstext,amscd,amsfonts,amsbsy,amsxtra,latexsym,amsmath}
\usepackage{fullpage}
\usepackage[english]{babel}
\usepackage[latin1]{inputenc}
\usepackage{verbatim}
\usepackage{graphicx} 
\usepackage{enumitem}
\numberwithin{figure}{section}

\newcommand{\field}[1]{\mathbb{#1}}
\newcommand{\N}{\field{N}}
\newcommand{\Z}{\field{Z}}
\newcommand{\R}{\field{R}}

\newcommand{\Q}{\field{Q}}

\newcommand{\SL}{\operatorname{SL}}
\newcommand{\PSL}{\operatorname{PSL}}

\newcommand{\Res}{\operatorname{Res}}

\newcommand{\im}{\operatorname{Im}}
\newcommand{\re}{\operatorname{Re}}
\renewcommand{\H}{\mathbb{H}}

\newcommand{\slz}{\mathrm{SL}_2(\mathbb{Z})}

\newcommand{\Lpow}{r}

\newtheorem{theorem}{Theorem}[section]

\newtheorem{lemma}[theorem]{Lemma}
\newtheorem{corollary}[theorem]{Corollary}
\newtheorem{proposition}[theorem]{Proposition}

\newtheorem*{theorem*}{Theorem}

\theoremstyle{remark}
\newtheorem*{remark}{Remark}
\newtheorem*{remarks}{Remarks}
\numberwithin{equation}{section}

\renewenvironment{proof}[1][Proof]{\begin{trivlist}
\item[\hskip \labelsep {\bfseries #1:}]}{\qed\end{trivlist}}

\newcommand{\bea}{\begin{eqnarray}} 
\newcommand{\eea}{\end{eqnarray}} 
\newcommand{\be}{\begin{equation*}} 
\newcommand{\ee}{\end{equation*}} 
\newcommand{\benn}{\begin{equation}} 
\newcommand{\eenn}{\end{equation}}

\begin{document}

\title[]{Ramanujan and coefficients of meromorphic modular forms}
\author{Kathrin Bringmann} 
\address{Mathematical Institute\\University of Cologne\\ Weyertal 86-90 \\ 50931 Cologne \\Germany}
\email{kbringma@math.uni-koeln.de}
\author{Ben Kane}
\address{Department of Mathematics\\ University of Hong Kong\\ Pokfulam\\ Hong Kong}
\email{bkane@maths.hku.hk}

\date{\today}
\subjclass[2010] {11F30, 11F37, 11F11}
\keywords{Fourier coefficients, meromorphic modular forms,  Poincar\'e series, quasi-modular forms, Ramanujan}
\thanks{ The research of the first author was supported by the Alfried Krupp Prize for Young University Teachers of the Krupp foundation and the research leading to these results has received funding from the European Research Council under the European Union's Seventh Framework Programme (FP/2007-2013) / ERC Grant agreement n. 335220 - AQSER.  The research of the second author was supported by grant project numbers 27300314 and 17302515 of the Research Grants Council. 
}
\begin{abstract}
The study of Fourier coefficients of meromorphic modular forms dates back to Ramanujan, who, together with Hardy, studied the reciprocal of the weight $6$ Eisenstein series.  Ramanujan conjectured a number of further identities for other meromorphic modular forms and quasi-modular forms which were subsequently established by Berndt, Bialek, and Yee.  In this paper, we place these identities into the context of a larger family by making use of Poincar\'e series introduced by Petersson and a new family of Poincar\'e series which we construct here and which are of independent interest.  In addition we establish a number of new explicit identities.  In particular, we give the first examples of Fourier expansions for meromorphic modular form with third-order poles and quasi-meromorphic modular forms with second-order poles. 
\vspace{.05in}

\noindent
French resume:  On calcule les coefficients de Fourier de formes modulaires m\'eromorphes dans le style de Ramanujan.  
\end{abstract}
\maketitle

\section{Introduction and statement of results}
In contrast to the situation for weakly holomorphic forms,  Fourier coefficients of meromorphic modular forms have only been considered in a number of isolated special cases.  
However, Berndt, Bialek, and Yee \cite{BeBiYe} noted similarities between the coefficients of a number of examples.  Based on their observation, the goal of 
 this paper 
is to begin the search for a general structure that 
puts these special cases into a general framework, paralleling the development of a general theory for Fourier coefficients of weakly holomorphic modular forms following Ramanujan's investigation of the partition function. 

In work which gave birth to the Circle Method, Hardy and Ramanujan \cite{HR1, HR2} derived their famous asymptotic formula for the partition function $p(n)$, namely, as $n\to\infty$,
\begin{equation}\label{partitiongrowth}
p(n)\sim \frac{1}{4n\sqrt{3}}\cdot  e^{\pi \sqrt{  \frac{2n}3}  }.
\end{equation}
Rademacher \cite{Rad} then perfected  the method to derive the exact formula
\begin{equation}\label{Radformula}
p(n)= 2 \pi (24n-1)^{-\frac{3}{4}} \sum_{k =1}^{\infty}\frac{A_k(n)}{k}\cdot  I_{\frac{3}{2}}\left( \frac{\pi\sqrt{24n-1}}{6k}\right).
\end{equation}
Here $I_{\ell}(x)$ is the $I$-Bessel function of order $\ell$ and $A_k(n)$ 
denotes the Kloosterman sum
\begin{displaymath}
A_k(n):=\frac{1}{2} \sqrt{\frac{k}{12}} \sum_{\substack{h \pmod {24k}\\
h^2 \equiv -24n+1 \pmod{24k}}}  \chi_{12}(h) \cdot
e\left(\frac{h}{12k}\right),
\end{displaymath}
where $e(\alpha):=e^{2\pi i \alpha}$ and $\chi_{12}(h):=\left(\frac{12}{h}\right)$.  A key ingredient of the proof of this formula is the fact that the partition generating function is the reciprocal of a modular form with no poles in the upper half-plane.  To be more precise, the function ($q:=e^{2\pi iz}$ throughout) 
\begin{displaymath}
P(z):=\sum_{n=0}^{\infty}p(n)q^{n-\frac{1}{24}}=
q^{-\frac{1}{24}}\prod_{n=1}^{\infty}\frac{1}{1-q^{n}},
\end{displaymath}
is a weight -1/2 {\it weakly holomorphic modular form}, a meromorphic modular form whose poles (if any) are supported at cusps. Rademacher and Zuckerman \cite{RZ, Zu1, Zu2} subsequently generalized \eqref{Radformula} to obtain exact formulas for the coefficients of all weakly holomorphic modular forms of negative weight.

Much less is known about the coefficients of general meromorphic modular forms.  Hardy and Ramanujan \cite{HR3} considered the special case that the meromorphic modular form has a unique simple pole $\text{modulo} \SL_2 (\Z)$. In particular, they found a formula for the reciprocal of the weight $6$ Eisenstein series $E_6$.  Ramanujan (see pages 102--104 of \cite{RaLost}) then stated further formulas for other meromorphic functions but, as usual for his writing, did not provide a proof. His claims concerning meromorphic modular forms with simple poles were then subsequently proven by Bialek in his Ph.D. thesis written under Berndt \cite{Bi}.  Berndt, Bialek, and Yee \cite{BeBiYe} were then first to explicitly compute the Fourier coefficients of meromorphic modular forms with second-order poles, resolving the last of Ramanujan's claims about their coefficients.

  Formulas for Fourier coefficients of meromorphic modular forms look very different from \eqref{Radformula}.  To expound upon one example, we state the result for $
1/E_4$. In this case, we have
\begin{equation*}
 \frac{1}{E_4(z)} = \sum_{n=0}^{\infty} \beta_n q^n
\end{equation*}
with 
\begin{equation}\label{bn2}
 \beta_n := (-1)^n \frac{3}{E_6(\rho)}\sum_{(\lambda)}\sum_{(c,d)} \frac{h_{(c,d)}(n)}{\lambda^3} e^{\frac{\pi n \sqrt{3}}{\lambda}}.
\end{equation}
Here $\rho:=e^{\frac{\pi i}{3}}$, $\lambda$ runs over the integers of the form $\lambda = 3^a \prod_{j=1}^{r}p_j^{a_j}$
 where $a=0$ or $1$, $p_j$ is a prime of the form $6m+1$, and $a_j \in \N_0$. Moreover $(c,d)$ runs over distinct solutions to $\lambda=c^2 -cd +d^2$ and $(a,b)\in\Z^2$ is any solution to $ad-bc=1$.  We recall the definition of distinct in Section \ref{sec:firstorder}.  Finally, we let
 $h_{(1,0)}(n):=1$, $h_{(2,1)}(n):=(-1)^n$, and for $\lambda \geq 7$
\begin{equation*}
 h_{(c,d)}(n):=  2 \cos\left( (ad+bc -2ac-2bd + \lambda)\frac{\pi n}{\lambda}-6 \arctan\left(\frac{c\sqrt{3}}{2d-c}\right)\right).
\end{equation*}
\begin{remarks}
\noindent

\noindent
\begin{enumerate}[leftmargin=*]
\item
Note that $1/E_4$ has integral coefficients which can be seen by 
directly inserting the  Fourier expansion of $E_4$ and then expanding the geometric series.  However, in parallel with the right-hand side of \eqref{Radformula}, the transcendence properties of the infinite sum defining $\beta_n$ in \eqref{bn2} are in no way obvious. One concludes that for each $n\in \N_0$, the sum in the definition of $\beta_n$ is algebraic, up to division by $E_6(\rho)$.  Moreover, by the Chowla--Selberg formula \cite{ChowlaSelberg} (see the corollary to Proposition 27 of \cite{123} and the following table for the specific form used here), $E_6(\rho)=24\sqrt{3}\Omega_{\Q(\rho)}^6$, where 
$$
\Omega_{\Q(\rho)}:=\frac{1}{\sqrt{6\pi}}\left(\frac{\Gamma\left(\frac{1}{3}\right)}{\Gamma\left(\frac{2}{3}\right)}\right)^{\frac{3}{2}}.
$$
One hence obtains an explicit constant whose ratio with the sum in \eqref{bn2} is integral.
\item
Expansions like \eqref{bn2} converge extremely rapidly, giving good approximations with very few terms.  
After noting that the six terms with $\lambda=1$ are equivalent, it is easy to check that the main asymptotic growth in \eqref{bn2} comes from the $(c,d)=(1,0)$ term, yielding 
$$
\beta_n\sim (-1)^n\frac{3}{E_6(\rho)}  e^{\pi n \sqrt{3}}.
$$  
Thus the coefficients of meromorphic modular forms grow much faster than the coefficients of weakly holomorphic modular forms (compare 
with 
\eqref{partitiongrowth}).
To give an impression of the rate of convergence, $\beta_{15}$ is a 36 digit number, while the main term
 gives the first 25 digits accurately (see \cite{BeBiYe} for further examples and tables with specific data).  
\item
The coefficients $\beta_n$ may also be related to (positive weight) Poincar\'e series evaluated at $\rho$, resulting in interesting identities which were investigated by Berndt and Bialek \cite{BeBi}. 
\end{enumerate}
\end{remarks}

The identities of Bialek \cite{Bi} and Berndt, Bialek, and Yee \cite{BeBiYe} were proven using the Circle Method, while in this paper various different Poincar\'e series are used to compute the Fourier coefficients.  This yields a 
treatment for the coefficients of a wide variety of meromorphic modular forms.  Fundamentally speaking, since our method extends to meromorphic forms with poles at arbitrary points in $\H$, this indicates that all meromorphic modular forms have Fourier expansions resembling \eqref{bn2}.  We also more closely examine expansions in the shape of \eqref{bn2}, realizing \eqref{bn2} and Fourier coefficients of many other meromorphic modular forms as sums over certain ideals in the ring of integers of imaginary quadratic fields. To describe the shape of these Fourier expansions, for $K=\Q(\mathfrak{z})$ with $\mathfrak{z}=\rho$ or $\mathfrak{z}=i$, we denote the norm in $\mathcal{O}_K$ by $N$ and the sum over all primitive ideals of $\mathcal{O}_K$ by $\sum_{\mathfrak{b}\subseteq\mathcal{O}_K}^*$. We further denote by $\omega_{\mathfrak{z}}:=\#\Gamma_{\mathfrak{z}}$ the size of the stabilizer $\Gamma_{\mathfrak{z}}\subset\PSL_2(\Z)$.  
We also require a function $C_{m}\left(\mathfrak{b},n\right)$ which closely resembles $h_{c,d}(n)$ for $\mathfrak{b}:=(c\mathfrak{z}+d)$.  Explicit definitions are given in Section \ref{sec:firstorder}.  Using this notation, we may rewrite Hardy and Ramanujan's formula for $1/E_6$ as 
$$
\frac{1}{E_6(z)} = \frac{\omega_{i}}{E_4^2(i)} \sum_{n=0}^{\infty} \;\sideset{}{^*}\sum_{\mathfrak{b}\subseteq\mathcal{O}_{\Q(i)}}\frac{C_{8}\left(\mathfrak{b},n\right)}{N(\mathfrak{b})^{4}} e^{\frac{2\pi n}{N(\mathfrak{b})}} q^n.
$$

 Berndt, Bialek, and Yee \cite{BeBiYe} investigated a beautiful relation between the coefficients of $1/E_6$ and the coefficients of weight $-4$ meromorphic modular forms with second-order poles at $z=i$. In particular, they obtained 
\begin{equation}\label{eqn:E4^2/E6^2}
\frac{E_4^2(z)}{E_6^2(z)}= \frac{{2\omega_i}}{E_4^2(i)}\sum_{n=0}^{\infty}\; \sideset{}{^*}\sum_{\mathfrak{b}\subseteq\mathcal{O}_{\Q(i)}}\left(n + \frac{3N(\mathfrak{b})}{2\pi}\right)\frac{C_{8}\left(\mathfrak{b},n\right)}{N(\mathfrak{b})^{4}} e^{\frac{2\pi n}{N(\mathfrak{b})}} q^n.
\end{equation}

It is striking that all of the meromorphic modular forms encountered above 
have Fourier expansions which may be written as linear combinations of the series 
(with $\mathfrak{z}_2:=\im(\mathfrak{z})$) 
\begin{equation}\label{Fklr}
F_{k,\ell,r}(q)=F_{k,\ell,r}(\mathfrak{z};q):=\sum_{n=0}^{\infty} \;\sideset{}{^*}\sum_{\mathfrak{b}\subseteq\mathcal{O}_{\Q(\mathfrak{z})}} \frac{C_{k}\left(\mathfrak{b},n\right)}{N(\mathfrak{b})^{\frac{k}{2}-\ell}} n^r e^{\frac{2\pi n}{N(\mathfrak{b})}\mathfrak{z}_2} q^n.
\end{equation}
The appearance of these functions in the above formulas for meromorphic modular forms is not isolated.  Ramanujan found similar formulas for powers of the weight $2$ Eisenstein series multiplied by meromorphic modular forms, which were later proven by Bialek \cite{Bi}.
To state two examples (again rewritten in terms of ideals), Bialek showed (see Theorem 2.12 of \cite{Bi})
\begin{equation}\label{eqn:E2/E6coeff}
\frac{E_2(z)}{E_6(z)} = \frac{\omega_i}{E_4^2(i)} \sum_{n=0}^{\infty} 
\;\sideset{}{^*}\sum_{\mathfrak{b}\subseteq\mathcal{O}_{\Q(i)}} \frac{C_{8}\left(\mathfrak{b},n\right)}{N(\mathfrak{b})^{3}} e^{\frac{2\pi n}{N(\mathfrak{b})}} q^n,
\end{equation}
\begin{equation}\label{eqn:E2^2/E6coeff}
\frac{E_2^2(z)}{E_6(z)} = \frac{\omega_i}{E_4^2(i)} \sum_{n=0}^{\infty} 
\;\sideset{}{^*}\sum_{\mathfrak{b}\subseteq\mathcal{O}_{\Q(i)}} \frac{C_{8}\left(\mathfrak{b},n\right)}{N(\mathfrak{b})^{2}} e^{\frac{2\pi n}{N(\mathfrak{b})}} q^n.
\end{equation}
The similarity between the Fourier coefficients in \eqref{eqn:E4^2/E6^2}, \eqref{eqn:E2/E6coeff}, and \eqref{eqn:E2^2/E6coeff} leads one to question whether the coefficients of all meromorphic (quasi-)modular forms have this shape.  In this paper, certain Poincar\'e series (see Section \ref{sec:firstorder}) considered by Petersson \cite{Pe1} and a new family of Poincar\'e series, introduced in Section \ref{sec:HigherOrder}, are employed to show that the formulas \eqref{eqn:E4^2/E6^2}, \eqref{eqn:E2/E6coeff}, and \eqref{eqn:E2^2/E6coeff} all fit into a general family of identities.  To describe the result, write $z=x+iy\in\H$ and denote by $\mathcal{M}_{\mathfrak{z},\nu}$ the space of negative weight meromorphic modular forms whose only poles modulo $\SL_2(\Z)$ are at the point $z=\mathfrak{z}$ and have order at most $\nu$. 
\begin{theorem}\label{thm:mainmero}
If $\mathfrak{z}\in \{i,\rho\}$ and $y>\mathfrak{z}_2$, then every $F\in \mathcal{M}_{\mathfrak{z},3}$ has a Fourier expansion which is  a linear combination of the expansions $F_{2k\omega_{\mathfrak{z}},\ell,r}
$ with $k\in \N$, $\ell\in \N_0$, and $r\in \N_0$.
\end{theorem}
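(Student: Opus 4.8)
The plan is to represent an arbitrary $F\in\mathcal{M}_{\mathfrak{z},3}$ as an explicit finite linear combination of the meromorphic Poincar\'e series studied in Sections \ref{sec:firstorder} and \ref{sec:HigherOrder}, and then to read off its Fourier expansion from the already computed expansions of those series, each of which is one of the $F_{2k\omega_{\mathfrak{z}},\ell,r}$. The first step is a reduction to principal parts: since $F$ has negative weight and the only holomorphic modular form of negative weight on $\SL_2(\Z)$ is identically zero, two elements of $\mathcal{M}_{\mathfrak{z},3}$ of the same weight with the same principal part at $\mathfrak{z}$ must coincide, so $F$ is determined by its principal part at the elliptic point $\mathfrak{z}$. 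Because the pole has order at most $3$ and $\mathfrak{z}$ is the only pole modulo $\SL_2(\Z)$, this principal part lies in a finite-dimensional space, whose dimension is further constrained by invariance under the stabilizer $\Gamma_{\mathfrak{z}}$ of order $\omega_{\mathfrak{z}}$. This elliptic structure also explains the weights that occur: a meromorphic Poincar\'e series with a pole at $\mathfrak{z}$ is nonzero only when its weight is divisible by $2\omega_{\mathfrak{z}}$ (recall $E_6(i)=0$ while $E_4(i)\neq0$, and symmetrically at $\rho$), which forces the index $2k\omega_{\mathfrak{z}}$.

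Next I would show that the Poincar\'e series of Sections \ref{sec:firstorder} and \ref{sec:HigherOrder} realize a spanning set of admissible principal parts of order at most $3$. Petersson's classical series account for the lower-order parts, while the new family of Section \ref{sec:HigherOrder} supplies the third-order parts; the passage to genuinely modular forms of higher weight is effected by the Serre derivative $\vartheta=\frac{1}{2\pi i}\frac{d}{dz}-\frac{\wt}{12}E_2$, which preserves modularity and produces the combined structure recorded by the indices $r$ (the factors $n^r$ coming from $\frac{1}{2\pi i}\frac{d}{dz}$) and $\ell$ (the accompanying powers of $N(\mathfrak{b})$). Matching the principal part of $F$ then becomes a finite, explicit linear-algebra problem in the Laurent coefficients of these series at $\mathfrak{z}$, which are determined in Sections \ref{sec:firstorder} and \ref{sec:HigherOrder}. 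Writing $F$ as the resulting linear combination and substituting the known Fourier expansions of the constituents, each equal to some $F_{2k\omega_{\mathfrak{z}},\ell,r}$, gives the assertion; the hypothesis $y>\mathfrak{z}_2$ ensures that $z$ lies above the pole at $\mathfrak{z}$ and the relevant $\SL_2(\Z)$-translates, so that the Fourier expansion converges to $F$ there.

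I expect the main obstacle to be this completeness step: proving that the available Poincar\'e series really do span all admissible principal parts up to order $3$ at the elliptic fixed point $\mathfrak{z}$. The difficulty is localized precisely at $\mathfrak{z}$, where several of the naive Poincar\'e series degenerate or vanish identically; this is exactly the gap that the new family in Section \ref{sec:HigherOrder} is built to fill, and verifying the non-vanishing and linear independence of the precise combinations needed to reach orders $2$ and $3$ is the technical heart of the argument.
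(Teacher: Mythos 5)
Your overall architecture --- reduce to principal parts (using that negative-weight holomorphic forms vanish), exhibit explicit series realizing the admissible principal parts, and read off the Fourier expansion from the constituents --- matches the paper's, and your remarks on the congruence forcing the index $2k\omega_{\mathfrak{z}}$ are consistent with \eqref{eqn:zRes}. But the paper gets the spanning step for free from Satz 3 of \cite{Pe1}: every $F\in\mathcal{M}_{2-m,\mathfrak{z},\nu}$ is a linear combination of $H_m(\mathfrak{z},z)$ and the functions $Y_{2-m,-\mu}(\mathfrak{z},z)$, $2\le\mu\le\nu$, of \eqref{eqn:Ydef}, which raise the pole order in $z$ by differentiating in the \emph{location} variable $\mathfrak{z}$, not by applying an operator in $z$. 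Consequently the technical heart is not the completeness question you flag (which is essentially classical), but the conversion step: Propositions \ref{prop:Ycoeff} and \ref{prop:Y3coeff} rewrite $Y_{2-m,-2}$ and $Y_{2-m,-3}$ as explicit combinations of the new Poincar\'e series $H_{m,\ell}^{(r)}$, whose expansions (Theorem \ref{genFC}) are precisely the $F_{2k\omega_{\mathfrak{z}},\ell,r}$; the theorem is then immediate from Propositions \ref{realFourier}, \ref{thm:Order2poles}, and \ref{thm:Order3poles}.

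The genuine gap in your route is the Serre derivative step. Applying $\vartheta=\frac{1}{2\pi i}\frac{d}{dz}-\frac{\kappa}{12}E_2$ to a simple-pole series does raise the pole order while preserving modularity, but it introduces the term $E_2(z)H_m(\mathfrak{z},z)$ (and, to reach third-order poles, $E_2^2H_m$ and $E_2'H_m$), and you assert without argument that these account for \emph{the accompanying powers of} $N(\mathfrak{b})$. That assertion is exactly the nontrivial identity $E_2^{\ell}(z)H_m(\mathfrak{z},z)=\left(\frac{3}{\pi}\right)^{\ell}H_{m,\ell}(\mathfrak{z},z)$ of \eqref{eqn:E2simple}, whose proof is the content of Section \ref{sec:quasi}: one must complete $E_2$ to $\widehat{E}_2$ as in \eqref{E2comp}, verify via \eqref{lowerH} and Lemma \ref{E2lemma} that the combination $\mathcal{F}_{m,\ell}^{(r)}$ is annihilated by the lowering operator, and match principal parts, using crucially that $\widehat{E}_2$ vanishes at $i$ and $\rho$. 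Without this input your expansion contains the unevaluated convolution of the coefficients of $E_2$ with those of $H_m$, which is not visibly of the form $F_{2k\omega_{\mathfrak{z}},\ell,r}$, so the proof does not close. A secondary point: the finite linear-algebra problem of matching principal parts at an elliptic fixed point must be set up in the local variable $X_{\mathfrak{z}}(z)$ as in \eqref{eqn:ellipticexp}, where the congruence condition on the exponents eliminates most naive Laurent coefficients; this, rather than a delicate independence argument, is what makes the dimension count work.
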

\begin{remarks}
\noindent

\noindent
\begin{enumerate}[leftmargin=*]
\item
By writing meromorphic forms as linear combinations of the images of 
certain differential operators on Poincar\'e series, Petersson obtained a formula for the coefficients of meromorphic modular forms (see (4b.9) of \cite{Pe1}) as sums of derivatives of Poincar\'e series.  However, to obtain an explicit formula for the coefficients, one  has   to take derivates of the Fourier expansions of these Poincar\'e series, yielding a formula which has infinite sums of infinite sums of Kloosterman sums.  To obtain formulas resembling those of Hardy and Ramanujan \cite{HR3}, one has to explicitly compute the action of the differential operators on 
these Poincar\'e series.  This is essentially the method undertaken in this paper.
\item
The method used here seems to generalize to meromorphic modular forms with poles of arbitrary order.  
\end{enumerate}
\end{remarks}

In individual cases, 
the methods in this paper also give the implied constants in Theorem \ref{thm:mainmero}  (see \eqref{eqn:explicitconsts1} and \eqref{eqn:explicitconsts2} for the exact constants), yielding a number of identities closely resembling \eqref{eqn:E4^2/E6^2}.
\begin{corollary}\label{cor:explicit}
\noindent

\noindent
\begin{enumerate}[leftmargin=*]
\item[\rm(1)]
If $f\in \{ 1/E_6^2, E_4/E_6^2, 1/E_4^2, E_6/E_4^2\}$ and  $\mathfrak{z}\in\left\{i,\rho\right\}$ is chosen so that $f\in \mathcal{M}_{\mathfrak{z},2}$, then
there exist constants $k=k_f$, $a=a_f$, and $c=c_f$ such that we have, for $y>\mathfrak{z}_2$, 
$$
f(z) = \sum_{n=0}^{\infty} \;\sideset{}{^*}\sum_{\mathfrak{b}\subseteq\mathcal{O}_{\Q(\mathfrak{z})}} \frac{C_{k}\!\left(\mathfrak{b},n\right)}{N(\mathfrak{b})^{\frac{k}{2}}}\left(a N(\mathfrak{b})+ c n\right)e^{\frac{2 \pi n}{N(\mathfrak{b})}\mathfrak{z}_2} q^n.
$$
\item[\rm(2)]
If $f\in \{1/E_4^3, E_6/E_4^3\}$, then there exist explicit constants $k=k_f$, $a=a_f$, $c=c_f$, and $d=d_f$ such that we have for $y>
\sqrt{3}/2
$
$$
f(z) =  \sum_{n=0}^{\infty} \;\sideset{}{^*}\sum_{\mathfrak{b}\subseteq\mathcal{O}_{\Q(\rho)}} \frac{C_{k}\!\left(\mathfrak{b},n\right)}{N(\mathfrak{b})^{\frac{k}{2}}}\left(a N(\mathfrak{b})^2+ c N(\mathfrak{b}) n+ dn^2\right)e^{\frac{\sqrt{3} \pi n}{N(\mathfrak{b})}} q^n.
$$
\end{enumerate}
\end{corollary}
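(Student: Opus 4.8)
The plan is to combine Theorem~\ref{thm:mainmero} with a dimension count that simultaneously fixes the weight index and forces the homogeneous shape, and then to read off the constants from the explicit Fourier expansion of a single Poincar\'e series. First I would record the local data: $E_4$ has a simple zero at $\rho$ and $E_6$ a simple zero at $i$, and neither vanishes elsewhere modulo $\SL_2(\Z)$ nor at the cusp, so each $f$ in part~(1) lies in $\mathcal{M}_{\mathfrak{z},2}$ and each $f$ in part~(2) in $\mathcal{M}_{\rho,3}$, with pole order equal to the indicated $\nu$. The key point is that the minimal pole order at $\mathfrak{z}$ of a nonzero weight-$w$ form ($w=\mathrm{wt}(f)$) that is holomorphic away from $\mathfrak{z}$ equals exactly $\nu$: multiplying such a form by $E_6^{j}$ (for $\mathfrak{z}=i$) or $E_4^{j}$ (for $\mathfrak{z}=\rho$) clears a pole of order at most $j$ and yields a holomorphic modular form of weight $w+6j$, respectively $w+4j$, and the smallest $j$ for which this weight supports a nonzero holomorphic form---necessarily of weight in $\{0,4,6\}$, since the intermediate weight $2$ admits only the zero form---is exactly $\nu$. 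At that weight the space is one-dimensional, so the forms of pole order exactly $\nu$ make up a line spanned by $f$.

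I would then feed this into Theorem~\ref{thm:mainmero}, which expresses the Fourier expansion of $f$ as a linear combination of the series $F_{k',\ell,r}$. Since each $F_{k',\ell,r}$ is the Fourier expansion of a form of weight $2-k'+2(\ell+r)$, matching weights leaves only terms with $2-k'+2(\ell+r)=w$; and since a term with $\ell+r=m$ contributes a pole of order $m+1$, the minimal-pole-order computation rules out every $m<\nu-1$ while the membership $f\in\mathcal{M}_{\mathfrak{z},\nu}$ rules out $m>\nu-1$. This forces $\ell+r=\nu-1$ and the single index $k=2\nu-w$, which one checks is divisible by $2\omega_{\mathfrak{z}}$ in each case, so the coefficient of $q^n$ becomes a homogeneous polynomial of degree $\nu-1$ in $N(\mathfrak{b})$ and $n$---that is, $aN(\mathfrak{b})+cn$ when $\nu=2$ and $aN(\mathfrak{b})^2+cN(\mathfrak{b})n+dn^2$ when $\nu=3$.

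To determine the constants I would produce a single meromorphic Poincar\'e series $Y$ of weight $w$ with a pole of order exactly $\nu$ at $\mathfrak{z}$---the Petersson series of Section~\ref{sec:firstorder} for $\nu\le2$ and the new family of Section~\ref{sec:HigherOrder} for $\nu=3$---and compute its Fourier expansion directly, reading off explicit coefficients of the shape just described. Because $Y$ again lies in the one-dimensional space spanned by $f$, it is a scalar multiple of $f$, and I would fix the scalar by comparing leading Laurent coefficients at $\mathfrak{z}$. The Laurent expansion of $f$ at $\mathfrak{z}$ I would compute from $E_4(\rho)=0$ and $E_6(i)=0$ together with Ramanujan's relations $q\,\partial_qE_4=(E_2E_4-E_6)/3$ and $q\,\partial_qE_6=(E_2E_6-E_4^2)/2$, which supply the needed derivatives of $E_4,E_6$ at the elliptic points; the nonvanishing values $E_4(i)$ and $E_6(\rho)$ (the latter via the Chowla--Selberg formula) then enter the normalizations, exactly as $\omega_i/E_4^2(i)$ does for $1/E_6$. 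The explicit outputs are the constants recorded in~\eqref{eqn:explicitconsts1} and~\eqref{eqn:explicitconsts2}.

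The step I expect to be the main obstacle is the direct Fourier computation of $Y$, above all for $\nu=3$, where the new higher-order Poincar\'e series of Section~\ref{sec:HigherOrder} must be controlled. There one has to unfold the series and evaluate the resulting integrals, whose order-$\nu$ pole produces the degree-$(\nu-1)$ polynomial factor in $n$ and $N(\mathfrak{b})$ rather than a single monomial, while checking that convergence persists throughout the range $y>\mathfrak{z}_2$. This is precisely the explicit evaluation of the action of the differential operators flagged in the remark after Theorem~\ref{thm:mainmero} as the method of the present paper, and pushing the normalizing constants through it is the remaining bookkeeping.
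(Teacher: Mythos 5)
Your proposal is correct and follows essentially the same route as the paper: the paper's proof of Corollary \ref{cor:explicit} consists of Lemmas \ref{secondpolelemma} and \ref{thirdpolelemma}, which realize each $f$ as an explicit scalar multiple of a single series $Y_{2-m,-\nu}(\mathfrak{z},z)$ by comparing leading Laurent coefficients at $\mathfrak{z}$ (computed, as you suggest, from $E_4(\rho)=0$, $E_6(i)=0$ and Ramanujan's derivative formulas \eqref{E4D}--\eqref{E6D}), followed by substituting the already-established Fourier expansions of Propositions \ref{thm:Order2poles} and \ref{thm:Order3poles}. The only real difference is how one sees that a single $Y$ suffices with no lower-order correction: you argue via the dimension count $g\mapsto gE_4^{\nu}$ (resp. $gE_6^{\nu}$) landing in a one-dimensional space of holomorphic forms, whereas the paper invokes Petersson's Satz 3 together with the congruence conditions in the elliptic expansion \eqref{eqn:ellipticexp}, which force all sub-leading Laurent coefficients of the principal part to vanish; both arguments are valid and yield the same conclusion.
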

\begin{remark}
Corollary \ref{cor:explicit} (2) gives the first examples for Fourier coefficients 
of meromorphic modular forms with 
third-order poles.
\end{remark}
Denote by $\mathcal{M}_{\kappa, \mathfrak{z}, \nu}$ the subspace of $\mathcal{M}_{\mathfrak{z},\nu}$ consisting of forms satisfying weight $\kappa\in\Z$ modularity.  The following theorem generalizes formulas like \eqref{eqn:E2/E6coeff} and \eqref{eqn:E2^2/E6coeff} as well as a formula for $\frac{E_2}{E_4}$ obtained by Bialek \cite{Bi}.

\begin{theorem}\label{thm:mainE2}
Suppose that $\mathfrak{z}\in \{i,\rho\}$, $m\geq 4+2\ell$, $\ell\in\mathbb{N}$, and $1\leq \nu\leq 2$.  If $F\in \mathcal{M}_{2-m,\mathfrak{z},\nu}$, then, for $y>\mathfrak{z}_2$, the function $E_2^{\ell}F$ has a Fourier expansion which is a linear combination of $F_{2k\omega_{\mathfrak{z}},\ell,r}$ with $k\in \N$, $\ell\in \N_0$, and $r\in \N_0$.
\end{theorem}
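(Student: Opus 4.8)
The plan is to strip off the quasimodularity of $E_2^{\ell}F$, reduce to the genuinely meromorphic modular forms already treated in Theorem \ref{thm:mainmero}, and then transport the resulting Fourier expansions through the operator $D:=q\frac{d}{dq}=\frac{1}{2\pi i}\frac{d}{dz}$. The key elementary observation is that the class $\mathcal{S}$ of Fourier expansions equal to a finite linear combination of the $F_{2k\omega_{\mathfrak{z}},\ell,r}$ is stable under $D$: differentiating \eqref{Fklr} term by term multiplies the $n$-th coefficient by $n$, so $DF_{k,\ell,r}=F_{k,\ell,r+1}$ and hence $D\mathcal{S}\subseteq\mathcal{S}$. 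Since $\mathcal{S}$ also contains every negative-weight meromorphic modular form with poles only at $\mathfrak{z}$ by Theorem \ref{thm:mainmero}, it suffices to exhibit $E_2^{\ell}F$ as a linear combination of functions $D^{j}H$ with $H$ such a form.

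To produce that decomposition I would pass to the weight-$2$ completion $E_2^{*}:=E_2-\frac{3}{\pi y}$, which transforms as a (nonholomorphic) modular form of weight $2$. Writing $E_2=E_2^{*}+\frac{3}{\pi y}$ and using that $E_2^{\ell}F$ is the holomorphic part of the almost-meromorphic modular form $\left(E_2^{*}\right)^{\ell}F$, repeated application of the Serre derivative $\vartheta_{\kappa}g=Dg-\frac{\kappa}{12}E_2g$ (equivalently, matching powers of $1/y$) writes
\[
E_2^{\ell}F=\sum_{j=0}^{\ell}c_j\,D^{j}H_j,
\]
where each $H_j$ is a genuine meromorphic modular form of weight $2\ell+2-m-2j$ with poles only at $\mathfrak{z}$. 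Here the hypothesis $m\geq 4+2\ell$ plays a triple role: it forces every weight $2\ell+2-m-2j$ to be negative, which (i) makes the inversions $E_2g=\frac{12}{\kappa}\left(Dg-\vartheta_{\kappa}g\right)$ legitimate since $\kappa\neq 0$, (ii) rules out the exceptional $E_2$-term in the structure theory of quasimodular forms, which would require weight at least two, and (iii) places each $H_j$ in the negative-weight range to which Theorem \ref{thm:mainmero} applies.

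With the decomposition in hand, the proof concludes by applying Theorem \ref{thm:mainmero} to each $H_j$ and then $D$-stability of $\mathcal{S}$ to each $D^{j}H_j$. The main obstacle is the pole-order bookkeeping: since $D$ raises the order of the pole at $\mathfrak{z}$ by one, the forms $H_j$ acquire poles of order up to $\nu+(\ell-j)\leq 2+\ell$, so the conclusion for general $\ell$ requires the extension of Theorem \ref{thm:mainmero} to poles of arbitrary order indicated in the remark following it, rather than the stated bound $\leq 3$. An alternative that keeps all pole orders $\leq\nu\leq 2$, and is closer to the Poincar\'e series philosophy of the paper, is to construct directly a quasi-meromorphic Poincar\'e series $\mathbb{P}$ of weight $2\ell+2-m$ whose principal part at $\mathfrak{z}$ agrees with that of $E_2^{\ell}F$ and whose Fourier expansion is visibly a combination of the $F_{2k\omega_{\mathfrak{z}},\ell,r}$; then $E_2^{\ell}F-\mathbb{P}$ is a holomorphic quasimodular form of negative weight, hence identically $0$, which yields the claim. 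In either route the delicate step is the explicit evaluation of the Fourier expansions of the (derivatives of the) Poincar\'e series together with the verification that the nonholomorphic contributions of the completion cancel, leaving precisely the holomorphic series $F_{2k\omega_{\mathfrak{z}},\ell,r}$.
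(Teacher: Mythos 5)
Your observation that $D:=q\frac{d}{dq}$ sends $F_{k,\ell,r}$ to $F_{k,\ell,r+1}$ is correct, and the idea of reducing to Theorem \ref{thm:mainmero} via the structure theory of quasimodular forms is natural. But your primary route has a genuine gap in exactly the place you flag, and the issue is not repairable within the stated results. The decomposition $E_2^{\ell}F=\sum_{j=0}^{\ell}c_jD^{j}H_j$ is essentially unique (comparing depths forces the top component), and the top component $H_\ell$ is a nonzero multiple of $F$ itself. Since $D$ raises the pole order at $\mathfrak{z}$ by exactly one, $D^{\ell}H_\ell$ has a pole of order exactly $\nu+\ell$, while $E_2^{\ell}F$ has a pole of order only $\nu$; hence the lower components $H_j$ are forced to carry poles of order up to $\nu+\ell-j$, and there is no freedom to choose better representatives. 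Already for $\nu=2$, $\ell=2$ (a case permitted by the hypotheses, e.g.\ $E_2^2/E_6^2$ with $m=14$) this produces a pole of order $4$, outside the reach of Theorem \ref{thm:mainmero}, which is only proved for $\mathcal{M}_{\mathfrak{z},3}$. The ``extension to poles of arbitrary order'' you invoke is only a remark in the paper expressing an expectation, not a theorem, so the primary route does not prove the statement for general $\ell$.

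Your alternative route is, in spirit, what the paper actually does, but you leave its entire substance unexecuted, and that substance is the proof. Concretely, the paper introduces the Poincar\'e series $H_{m,\ell}(\mathfrak{z},z)$ of \eqref{eqn:Htildegendef} (inserting $\mathfrak{z}_2^{-\ell}$ into Petersson's series), computes their Fourier expansions (Theorem \ref{genFC}), and forms $\mathcal{F}_{m,\ell}^{(r)}(\mathfrak{z},z)=\sum_j\binom{\ell}{j}\left(\frac{3}{\pi}\right)^{\ell-j}\widehat{E}_2^j(\mathfrak{z})H_{m-2j,\ell-j}^{(r)}(\mathfrak{z},z)$, chosen so that $L_{\mathfrak{z}}$ annihilates it; matching principal parts (Lemma \ref{lem:Fmellgrowth}) identifies it, modulo cusp forms, with $\frac{1}{r!}\frac{\partial^{r}}{\partial z^{r}}\bigl(E_2^{\ell}(z)H_m(\mathfrak{z},z)\bigr)$. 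The decisive step, which your sketch never touches, is that $\widehat{E}_2$ vanishes at $i$ and $\rho$, so that at $\mathfrak{z}\in\{i,\rho\}$ the whole sum collapses to the single term $\left(\frac{3}{\pi}\right)^{\ell}H_{m,\ell}^{(r)}(\mathfrak{z},z)$, whose expansion is visibly a multiple of $F_{2k\omega_{\mathfrak{z}},\ell,r}$; this is why the pole order never increases and why the theorem is restricted to these two points. Combined with Petersson's decomposition of $F$ into $H_m$ and $Y_{2-m,-2}$ and the identity of Proposition \ref{prop:Ycoeff}, this yields the claim for $\nu\leq 2$ with all pole orders under control. Without constructing $\mathbb{P}$ and computing its expansion, your alternative is a restatement of the problem rather than a solution.
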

\begin{remark}
Note that by the valence formula, the restriction $1\leq \nu\leq 2$ also implies a restriction on $m$.  One may then check that, under the restrictions on $m$ and $\nu$, the space of weight $m+2(\nu-1)$ cusp forms is trivial.  
\end{remark}
Comparing the cases covered by Theorem \ref{thm:mainE2} with the known special cases leads to the following new explicit example (see \eqref{eqn:explicitconsts3} for the explicit constants).

\begin{corollary}\label{cor:E2explicit}
There exist explicit constants $k$, $a$, $c$, and $d$ such that for $y>1$, we have
$$
\frac{E_2(z) E_4^2(z)}{E_6^2(z)}=\sum_{n=0}^{\infty} \;\sideset{}{^*}\sum_{\mathfrak{b}\subseteq\mathcal{O}_{\Q(i)}} \frac{C_{k}\!\left(\mathfrak{b},n\right)}{N(\mathfrak{b})^{\frac{k}{2}-1}}\left(a N(\mathfrak{b})+ c n\right)e^{\frac{2 \pi n}{N(\mathfrak{b})}} q^n  + d\sum_{n=0}^{\infty} \;\sideset{}{^*}\sum_{\mathfrak{b}\subseteq\mathcal{O}_{\Q(i)}} \frac{C_{k-4}\!\left(\mathfrak{b},n\right)}{N(\mathfrak{b})^{\frac{k}{2}-2}}e^{\frac{2 \pi n}{N(\mathfrak{b})}} q^n.
$$

\end{corollary}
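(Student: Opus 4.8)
The plan is to read off $\frac{E_2E_4^2}{E_6^2}$ as the product $E_2\cdot F$ with $F:=\frac{E_4^2}{E_6^2}$ and to apply Theorem \ref{thm:mainE2} with $\mathfrak{z}=i$. First I would verify the hypotheses. The form $F$ has weight $8-12=-4$; since $E_4(i)\neq 0$ while $E_6$ has a simple zero at $z=i$, which is its only zero modulo $\SL_2(\Z)$, the quotient $F$ has a double pole at $i$ and no other poles, so $F\in\mathcal{M}_{-4,i,2}$. Writing the weight as $2-m$ gives $m=6$, and with $\ell=1$ the condition $m\geq 4+2\ell=6$ holds (at the boundary) while $1\leq\nu=2\leq 2$. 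Thus Theorem \ref{thm:mainE2} applies and produces a representation of $E_2F$ as a linear combination of the series $F_{2k'\omega_i,\ell',r'}=F_{4k',\ell',r'}$ (recall $\omega_i=2$), so the only admissible ``weights'' are multiples of $4$.

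To identify the precise terms and the constants $a,c,d$ (and $k=8$), I would compare with the known expansions using the differential operator $D:=\frac{1}{2\pi i}\frac{d}{dz}$. A direct computation from $DE_2=\frac{E_2^2-E_4}{12}$ and $DE_6=\frac{E_2E_6-E_4^2}{2}$ gives the identity
$$
\frac{E_2E_4^2}{E_6^2}=2\,D\!\left(\frac{E_2}{E_6}\right)+\frac{5}{6}\,\frac{E_2^2}{E_6}+\frac{1}{6}\,\frac{E_4}{E_6}.
$$
By \eqref{eqn:E2/E6coeff} and \eqref{eqn:E2^2/E6coeff} we have $\frac{E_2}{E_6}=\frac{\omega_i}{E_4^2(i)}F_{8,1,0}$ and $\frac{E_2^2}{E_6}=\frac{\omega_i}{E_4^2(i)}F_{8,2,0}$. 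Since the factor $e^{\frac{2\pi n}{N(\mathfrak{b})}\mathfrak{z}_2}$ in \eqref{Fklr} is constant in $z$ and $Dq^n=nq^n$, differentiation acts term by term as $D F_{k,\ell,r}=F_{k,\ell,r+1}$; hence the first two summands equal $\frac{2\omega_i}{E_4^2(i)}F_{8,1,1}$ and $\frac{5\omega_i}{6E_4^2(i)}F_{8,2,0}$. These supply the coefficients $c=\frac{2\omega_i}{E_4^2(i)}$ and $a=\frac{5\omega_i}{6E_4^2(i)}$ of the $C_8$--sum in the corollary.

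It remains to expand $\frac{E_4}{E_6}$, a weight $-2$ meromorphic form with a single simple pole at $i$, which accounts for the lower-weight term $F_{k-4,0,0}=F_{4,0,0}$. Here I would invoke the first-order (simple pole) theory from Section \ref{sec:firstorder}: writing its weight as $2-4$ identifies the relevant Poincar\'e series as the weight-$4$ one, so $\frac{E_4}{E_6}=\gamma\,F_{4,0,0}$ for an explicit $\gamma$, whence $d=\gamma/6$. The constant $\gamma$ is pinned down by matching the principal part at $z=i$; using $E_6'(i)=-\pi i E_4^2(i)$ one computes $\operatorname{res}_{z=i}\frac{E_4}{E_6}=\frac{i}{\pi E_4(i)}$, and comparing this with the normalization of $F_{4,0,0}$, exactly as in the model case $\frac{1}{E_6}=\frac{\omega_i}{E_4^2(i)}F_{8,0,0}$, determines $\gamma$. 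Assembling the three pieces yields the stated identity with $k=8$.

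The hard part is the bookkeeping in this last step: one must rule out any further contributions $F_{4k',\ell',r'}$, for instance a weight-$4$ term carrying a factor $n^r$ or additional weight-$8$ terms. This is precisely where the Remark following Theorem \ref{thm:mainE2} enters: under the present restrictions the space of weight $m+2(\nu-1)=8$ cusp forms is trivial, so the representation guaranteed by the theorem is unique and the explicit combination produced above must be it. The only genuinely delicate computation is the normalization constant $\gamma$, equivalently the precise normalization of the weight-$4$ simple-pole Poincar\'e series at $i$; every other step is the routine Ramanujan--Serre algebra recorded above.
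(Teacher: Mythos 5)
Your route is genuinely different from the paper's and is, in structure, correct. The paper proves the corollary by first writing $\frac{E_4^2}{E_6^2}$ as a multiple of $Y_{-4,-2}(i,z)$ (Lemma \ref{secondpolelemma}), converting $Y_{-4,-2}(i,z)E_2(z)$ into an explicit combination of $H_{8,1}^{(1)}(i,z)$, $H_{8,2}(i,z)$ and $H_4(i,z)$ via Proposition \ref{prop:Ycoeff} and Corollary \ref{cor:Htildegen} (this is Lemma \ref{lem:E2E4^2/E6^2}), and then reading off the expansion from Theorem \ref{genFC}. You instead use the Ramanujan differential system to get the identity
$$
\frac{E_2E_4^2}{E_6^2}=2\,D\!\left(\frac{E_2}{E_6}\right)+\frac{5}{6}\,\frac{E_2^2}{E_6}+\frac{1}{6}\,\frac{E_4}{E_6},
$$
which I have checked, and then reduce to three expansions that are already available; the observation $DF_{k,\ell,0}=F_{k,\ell,1}$ is correct, your residue computation for $\frac{E_4}{E_6}$ is correct, and the appeal to Theorem \ref{thm:mainE2} and to the triviality of the relevant cusp form spaces is not really needed once you have the explicit identity. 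This is a cleaner and more elementary derivation of the same structural statement, and it produces $k=8$ and $d=\frac{1}{3E_4(i)}$ in agreement with the paper.

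The one substantive problem is with your values of $a$ and $c$: you take \eqref{eqn:E2/E6coeff} and \eqref{eqn:E2^2/E6coeff} at face value, but as printed in the introduction these are inconsistent with the rest of the paper (and with \eqref{eqn:E4^2/E6^2}) by factors of $\frac{3}{\pi}$ and $\frac{9}{\pi^2}$ respectively. One can see this already from constant terms: the paper's own machinery (Lemma \ref{residuesimpleE2} together with Theorem \ref{genFC}) gives $\frac{E_2}{E_6}=\frac{6}{\pi E_4^2(i)}F_{8,1,0}$ and $\frac{E_2^2}{E_6}=\frac{18}{\pi^2E_4^2(i)}F_{8,2,0}$, not $\frac{\omega_i}{E_4^2(i)}$ times these series; indeed $\sum^*_{\mathfrak{b}}C_8(\mathfrak{b},0)N(\mathfrak{b})^{-3}=\frac{\pi}{6}E_4^2(i)$, so the right-hand side of \eqref{eqn:E2/E6coeff} as printed has constant term $\frac{\pi}{3}\neq 1$. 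With the corrected inputs your decomposition yields $c=\frac{12}{\pi E_4^2(i)}$ and $a=\frac{15}{\pi^2E_4^2(i)}$, which is exactly what Lemma \ref{lem:E2E4^2/E6^2} plus Theorem \ref{genFC} produce (and which also suggests that the constants recorded in \eqref{eqn:explicitconsts3} are themselves garbled). So your argument is sound as a proof of the corollary as stated (existence of explicit constants), but if you want the correct numerical values you must recompute the two Bialek-type inputs from Lemma \ref{residuesimpleE2} rather than quoting the displayed equations in the introduction.
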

\begin{remark}
Corollary \ref{cor:E2explicit} contains the first case where $E_2$ is multiplied by a meromorphic form which has a pole that is not simple.
\end{remark}
As alluded to earlier, the method used here may also be applied to meromorphic modular forms with poles at arbitrary points in $\H$.  To give the flavor of the resulting formulas, 
we compute the Fourier coefficients of one infinite family of weight $-8$ meromorphic modular forms with poles at arbitrary points in $\H$.  For $\tau_0\in \H$ not equivalent to $\rho$ or $i$ modulo $\SL_2(\Z)$, we hence define
\begin{equation}\label{eqn:Ftaudef}
F_{\tau_0}(z):=\frac{E_4(z)}{\Delta(z)\left(j(z)-j\left(\tau_0\right)\right)^2},
\end{equation}
where $\Delta$ is the discriminant function and $j$ is the $j$-invariant.  We see later that each $F_{\tau_0}$ is a weight $-8$ meromorphic modular form whose only pole modulo $\SL_2 (\Z)$ is at $\tau_0$ and has order exactly $2$.

The coefficients of $F_{\tau_0}$ closely resemble \eqref{bn2}, 
where the sum runs over $(c,d)=1$ (with $\lambda$ satisfying $\lambda = \left|c\tau_0+d\right|^2$).  The $(c,d)$th contribution to the $n$th coefficient is written 
as a linear combination 
 of $B_{m,c,d}\left(\tau_0,n\right)$ (defined in \eqref{eqn:Bdef}), where $m\in 2\N$.  
The coefficients in the linear combination depend on $n$, $\lambda$, and the 
Laurent 
coefficients
 of the principal part of $F_{\tau_0}$ around $z=\tau_0$,
 which we denote by $\lambda_{-2}$ and $\lambda_{-1}$ (these are explicitly computed in \eqref{eqn:deflambda1} and \eqref{eqn:deflambda2}).  
Denoting $v_0:=\im(\tau_0)$, we
 are now ready to state the result.
\begin{theorem}\label{thm:PoleArbitrary}
If $y>\im\left(M\tau_0\right)$ for all $M\in \SL_2(\Z)$, then
\begin{multline*}
F_{\tau_0}(z) = 2\pi i \sum_{n=0}^{\infty}\sum_{\substack{c,d\in \Z\\ (c,d)=1}}\Bigg(\frac{i\lambda_{-2}}{4}\left( \frac{5\left|c\tau_0+d\right|^2}{v_0} + 2\pi n \right)B_{12,c,d}\left(\tau_0,n\right)\\
 + \left( \frac{\lambda_{-2}}{40\left(2iv_0\right)^{10}}-\frac{\lambda_{-1}}{2}\right)B_{10,c,d}\left(\tau_0,n\right)\Bigg)q^n.
\end{multline*}
\end{theorem}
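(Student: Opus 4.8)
The plan is to realize $F_{\tau_0}$ as an explicit linear combination of meromorphic Poincar\'e series attached to $\tau_0$ and then to read off the Fourier expansion from the known expansions of those series. First I would record the analytic structure of $F_{\tau_0}$: since $\Delta$ is nonvanishing on $\H$ and $E_4(\tau_0)\neq 0$ (as $\tau_0\not\sim\rho$), while $j-j(\tau_0)$ has a simple zero precisely along the $\SL_2(\Z)$-orbit of $\tau_0$ (as $\tau_0\not\sim i,\rho$), the function $F_{\tau_0}$ is a weight $-8$ meromorphic modular form whose only pole modulo $\SL_2(\Z)$ lies at $\tau_0$ and has order exactly $2$. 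I would then compute its principal part at $\tau_0$, extracting the Laurent coefficients $\lambda_{-2}$ and $\lambda_{-1}$ of \eqref{eqn:deflambda1} and \eqref{eqn:deflambda2}; the leading coefficient is governed by $j'(\tau_0)$ together with $E_4(\tau_0)/\Delta(\tau_0)$, and $\lambda_{-1}$ involves the next-order Taylor data of the denominator.

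Next I would invoke the Poincar\'e-series machinery. For each pole order $\nu\in\{1,2\}$ there is a Poincar\'e series with a pole of order $\nu$ at $\tau_0$, and the difference between $F_{\tau_0}$ and the appropriate $\lambda_{-2}$-, $\lambda_{-1}$-weighted combination of these series is a holomorphic modular form of weight $-8$. By a valence-formula count (in the spirit of the remark following Theorem \ref{thm:mainE2}), the relevant space of holomorphic forms is trivial, so this difference vanishes and $F_{\tau_0}$ equals the Poincar\'e combination exactly. The order-two series is obtained from the order-one series by applying a weight-raising differential operator; computing this action explicitly---rather than differentiating Petersson's Fourier expansion term-by-term---is what converts the abstract combination into the closed form of the statement.

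I would then compute the Fourier expansion by unfolding each Poincar\'e series: grouping the $\SL_2(\Z)$-sum by bottom rows gives coset representatives for $\Gamma_\infty\backslash\SL_2(\Z)$ indexed by coprime pairs $(c,d)$ with $\lambda=|c\tau_0+d|^2$, and the residual sum over $\Gamma_\infty$ is evaluated by Poisson summation, producing the functions $B_{m,c,d}(\tau_0,n)$ of \eqref{eqn:Bdef}. The base weight is $10=2-(-8)$, so the order-one ($\lambda_{-1}$) part contributes only $B_{10,c,d}$, whereas the weight-raising operator applied to the order-two ($\lambda_{-2}$) part produces effective weight $12$, yielding the $B_{12,c,d}$ terms together with a lower-order $B_{10,c,d}$ correction. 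The raising operator acts on both the exponential and the automorphy factor: its zeroth-order piece, a multiple of the weight divided by the transformed imaginary part $\im\left(M\tau_0\right)^{-1}=|c\tau_0+d|^2/v_0$, produces the term $5|c\tau_0+d|^2/v_0$ (here $5$ is half the weight $10$), while its derivative acting on $q^n$ produces the $2\pi n$; together these give the factor $5|c\tau_0+d|^2/v_0+2\pi n$ multiplying $B_{12,c,d}$.

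The main obstacle is the bookkeeping of the explicit constants. One must track the weight-raising operator through the full Fourier expansion, correctly combine the resulting $B_{12,c,d}$ and $B_{10,c,d}$ pieces of the $\lambda_{-2}$-contribution with the $\lambda_{-1}$-contribution, and pin down every normalization---in particular the overall $2\pi i$, the factor $i/4$, and the delicate constant $1/(40(2iv_0)^{10})$, which should emerge from a Gamma-function (binomial) factor in the differentiation together with the normalization $(2iv_0)=\tau_0-\overline{\tau_0}$ of the order-two seed. Verifying that these assemble into precisely $\frac{i\lambda_{-2}}{4}(5|c\tau_0+d|^2/v_0+2\pi n)$ and $\frac{\lambda_{-2}}{40(2iv_0)^{10}}-\frac{\lambda_{-1}}{2}$ is the crux.
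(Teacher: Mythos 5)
Your proposal follows essentially the same route as the paper: establish that $F_{\tau_0}$ is a weight $-8$ meromorphic cusp form with a second-order pole at $\tau_0$, match its principal part (the Laurent data $\lambda_{-2},\lambda_{-1}$) against those of Petersson's series $Y_{-8,-2}(\tau_0,z)$ and $H_{10}(\tau_0,z)$ to pin down the linear combination, and then convert the order-two series into explicit Fourier data via Proposition \ref{prop:Ycoeff} (your ``weight-raising operator,'' which produces the $H_{12,1}$ and $H_{12}^{(1)}$ pieces and hence the factor $5|c\tau_0+d|^2/v_0+2\pi n$ on $B_{12,c,d}$) together with Theorem \ref{genFC}\,(1). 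Your identification of the sources of the constants, including $m/2=5$ and the $(2iv_0)$-powers from the Laurent expansion of $Y_{-8,-2}$, matches the paper's computation.
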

\begin{remark}
If $\tau_0$ generates an imaginary quadratic field of class number one, then the $n$th coefficient of $F_{\tau_0}$ can be explicitly written as a sum over ideals in a shape similar to \eqref{eqn:E4^2/E6^2}.  Since the proof closely follows the proof of Theorem \ref{thm:mainmero}, we do not work out the details here.
\end{remark}

The paper is organized as follows. In Section \ref{sec:firstorder}, we recall Poincar\'e series introduced by Petersson with simple poles and rewrite their Fourier expansions in terms of sums over primitive ideals if the poles are at the special points $
\mathfrak{z}
\in\{i,\rho\}$.  In Section \ref{sec:HigherOrder}, we 
recall the construction of functions with higher-order poles and prove Theorem \ref{thm:mainmero} by relating 
these functions to new Poincar\'e series which we explicitly construct.  In Section \ref{sec:quasi}, we prove Theorem \ref{thm:mainE2} by rewriting powers of $E_2$ times the new Poincar\'e series from the previous section.  In Section \ref{sec:examples}, we consider explicit examples and compute the relevant implied constants to prove Corollaries \ref{cor:explicit} and \ref{cor:E2explicit}.  Finally, in Section \ref{sec:InfFamily}, we determine the Fourier expansion of the infinite family of meromorphic modular forms $F_{\tau_0}$, proving Theorem \ref{thm:PoleArbitrary}.

\section*{Acknowledgements}  
The authors would like to thank Pavel Guerzhoy and Steffen L\"obrich for helpful comments on earlier versions of the paper.

\section{Meromorphic Poincar\'e series with simple poles}\label{sec:firstorder}
 \subsection{Poincar\'e series of Petersson}
We start by recalling certain meromorphic weight $m\in 2\N$ Poincar\'e series, introduced in (2b.9) of \cite{Pe1},
\begin{equation*}
H_{m}(\mathfrak{z},z) := 2 \pi i \sum_{M \in \Gamma_{\infty}\backslash \textrm{SL}_2(\Z)} \frac{1}{1-e^{2 \pi i (z-\mathfrak{z})}}\Bigg|_{m,\mathfrak{z}} M,
\end{equation*}
 where $\Gamma_{\infty}:= \left\lbrace 
\left( \begin{smallmatrix}1 & \ell \\ 0 & 1 \end{smallmatrix}\right),  \ell \in  \Z \right\rbrace$ and $|_{m,\mathfrak{z}} $ denotes the weight $m$ slash operator with respect to $\mathfrak{z}$.  
\begin{remark}
Petersson used the notation $H_{-m}$ because in his time weight $m$ modular forms were referred to as having dimension $-m$.  
For certain related functions that Petersson called $Y_{m-2}$, we similarly write $Y_{2-m,\nu}$ in \eqref{eqn:Ydef} below. 
\end{remark}
The sum defining $H_m(\mathfrak{z},z)$ converges absolutely for $m\geq 4$ and $\mathfrak{z}\mapsto H_m(\mathfrak{z},z)$ 
is then a meromorphic modular form of weight $m$ by construction.  It has at most a simple pole at $\mathfrak{z}=Mz$ ($M=\left( \begin{smallmatrix}a & b \\c & d \end{smallmatrix}\right)\in \SL_2(\Z)$) with (see (2b.11) of \cite{Pe1}) 
\begin{equation}\label{res}
\Res_{\mathfrak{z} = Mz} \left(  H_{m} \left( \mathfrak{z}, z\right) \right) = \varepsilon (z) (cz+d)^{m-2},\ \text{where}
\end{equation}
$$
\varepsilon (z) : = 
\begin{cases}
2\omega_{z} & \text{if }\frac{m}{2}\equiv 1\pmod{\omega_{z}},\\
0&\text{otherwise.}
\end{cases}
$$

\noindent
As further noted by Petersson \cite{Pe1}, the functions $H_m(\mathfrak{z},z)$ vanish as $\mathfrak{z}\to i\infty$ (see also Lemma \ref{lem:Hgengrowth} (1) below) and we refer to such meromorphic modular forms as \begin{it}meromorphic cusp forms.\end{it}

The cases when linear combinations of $z\mapsto H_m(\mathfrak{z},z)$ are modular of weight $2-m$ were classified in Satz 1 of \cite{Pe1}. 
This yields an explicit constructuction of all 
\rm
meromorphic modular forms of negative weight with at most simple poles.  In particular, for fixed $\tau_0\in \H$, $H_m\left(\tau_0,z\right)$ satisfies weight $2-m$ modularity precisely when all cusp forms of weight $m$ vanish at $\tau_0$.  The residue at $z=M\mathfrak{z}$ is given by (see the discussion following (3a.10) of \cite{Pe1})
\begin{equation}\label{eqn:zRes}
\Res_{z=M\mathfrak{z}}\left( H_{m}\left(\mathfrak{z},z\right) \right)=\widetilde{\varepsilon}(\mathfrak{z})\left(c\mathfrak{z}+d\right)^{-m},
\end{equation}
where
$$
\widetilde{\varepsilon}(\mathfrak{z}):=
\begin{cases}
-2\omega_{\mathfrak{z}}&\text{if }\frac{m}{2}\equiv 0\pmod{\omega_{\mathfrak{z}}},\\
0&\text{otherwise}.
\end{cases}
$$
\subsection{Relation to imaginary quadratic fields}\label{sec:imagquad}
Our first application of the functions $H_m(\mathfrak{z},z)$ is to give an alternative interpretation of the meaning of distinct solutions in the Fourier expansions 
of  Hardy and Ramanujan.  In the sum \eqref{bn2}, we call a solution  $\left(
c_2, d_2\right)$ 
of $\lambda=c_2^2-c_2d_2+d_2^2$ 
 \begin{it}equivalent\end{it} to $(c,d)$ if it is one of the following:
$$
\pm (c,d),\  \pm (d,c),\ \pm (c-d,c),\ 
\pm (c,c-d),\ \pm (d,d-c),\ \pm (c-d,-d);
$$
we say that the solutions are \begin{it}distinct\end{it} otherwise.  The key step is to rewrite the expansion of $H_{m}(\mathfrak{z},z)$ at the special points $i$ and $\rho$. To state the result, we require some further notation.  We write throughout $\mathfrak{z}=\mathfrak{z}_1+i\mathfrak{z}_2$ and $z=x+iy$ with $\mathfrak{z}_1,\mathfrak{z}_2,x,y\in\R$.
If $\mathfrak{z}$ lies in an imaginary quadratic field $K$, we use the notation $\mathcal{O}_K$ for the ring of integers of $K$ and write ideals of $\mathcal{O}_K$ as $\mathfrak{b}\subseteq \mathcal{O}_K$.  We call the ideals which are not divisible by any principal ideal $(g)$ with $g\in \Z$ \begin{it}primitive\end{it} and denote the sum over all primitive ideals of $\mathcal{O}_K$ by $\sum_{\mathfrak{b}\subseteq\mathcal{O}_K}^*$.  For $\gamma=c\mathfrak{z}+d\in \mathcal{O}_{\Q(\mathfrak{z})}$ 
and $n\in \N_0$, we furthermore define the root of unity
$$
A_m\left(\gamma,n\right):=e\left(-\frac{n}{N(\gamma)}\left(ac|\mathfrak{z}|^2+bd+\mathfrak{z}_1(ad+bc)\right)-\frac{m}{2\pi}
\arg(\gamma)\right),
$$
where $N(\gamma)$ is the norm in $\mathcal{O}_K$ (we also use this notation for norms of ideals).  For the principal ideal $\mathfrak{b}=(\gamma)$, we let $A_m(\mathfrak{b},n):= A_m(\gamma,n)$.  The proof of Proposition \ref{realFourier} below shows that this definition is independent of the choice of a generator for $\mathfrak{b}$.  
Following (2.2.44) of \cite{Bi} (for the case $m=1$), one sees that
$$
\re\left(A_{6m}\left(c\rho+d,n\right)\right) = C_{6m}\left((c\rho+d),n\right),
$$
where for $\mathfrak{b}=(c\rho+d)\subset\mathcal{O}_{\Q(\rho)}$ we define
$$
C_{6m}\left(\mathfrak{b},n\right) := 
(-1)^n
\cos\left(\frac{\pi n}{N(\mathfrak{b})}\left(ad+bc-2ac-2bd\right) + \pi n -6m\arctan\left(\frac{c\sqrt{3}}{2d-c}\right)\right).
$$
Here $a,b\in \Z$ are any choice for which $ad-bc=1$. 
Similarly,  
for $\mathfrak{b}=(ci+d)\subseteq\mathcal{O}_{\Q(i)}$, we let
$$
C_{4m}\left(\mathfrak{b},n\right):=\cos\left(\frac{2\pi n}{N(\mathfrak{b})}\left(ac+bd\right)+4m\arctan\left(\frac{c}{d}\right)\right),
$$
so that (2.3.35) of \cite{Bi} yields
$$
\re\left(A_{4m}\left(ci+d,n\right)\right) = C_{4m}\left((ci+d),n\right).
$$

For $\mathfrak{z}=i$ or $\mathfrak{z}=\rho$, the above formulas lead to an evaluation of the Fourier expansion of $z\mapsto H_m(\mathfrak{z},z)$ in terms of ideals.
\begin{proposition} \label{realFourier}
For every $m\in \N$,  $\mathfrak{z}\in\{i,\rho\}$, and $y>\mathfrak{z}_2$, one has 
$$
H_{2m\omega_{\mathfrak{z}}
}(\mathfrak{z},z) = 4\pi i \omega_{\mathfrak{z}} \sum_{n=0}^{\infty}\; \sideset{}{^*}\sum_{\mathfrak{b}\subseteq\mathcal{O}_{\Q(\mathfrak{z})}} \frac{C_{2m\omega_\mathfrak{z}}}{\left(\mathfrak{b},n\right)}{N(\mathfrak{b})^{m\omega_{\mathfrak{z}}}}e^{\frac{2\pi n\mathfrak{z}_2}{N(\mathfrak{b})}} q^n.
$$
\end{proposition}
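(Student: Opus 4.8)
The plan is to unfold the Poincar\'e series into a Fourier expansion in $z$ and then reorganize the resulting sum over $\Gamma_{\infty}\backslash\SL_2(\Z)$ as a sum over primitive ideals. Writing $w:=2m\omega_{\mathfrak{z}}$, I first evaluate the slash operator: for $M=\left(\begin{smallmatrix}a&b\\c&d\end{smallmatrix}\right)$ one has $\frac{1}{1-e^{2\pi i(z-\mathfrak{z})}}\big|_{w,\mathfrak{z}}M=(c\mathfrak{z}+d)^{-w}(1-e^{2\pi i(z-M\mathfrak{z})})^{-1}$, so that
$$
H_w(\mathfrak{z},z)=2\pi i\sum_{M\in\Gamma_{\infty}\backslash\SL_2(\Z)}(c\mathfrak{z}+d)^{-w}\frac{1}{1-e^{2\pi i(z-M\mathfrak{z})}}.
$$
Since $\mathfrak{z}\in\{i,\rho\}$ lies in the standard fundamental domain, $\mathfrak{z}_2=\max_{M}\im(M\mathfrak{z})$, so the hypothesis $y>\mathfrak{z}_2$ guarantees $\im(z-M\mathfrak{z})>0$ for every $M$. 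I may therefore expand each summand geometrically as $\sum_{n\geq 0}e^{-2\pi i n M\mathfrak{z}}q^n$ and interchange the two sums, the interchange being justified by the absolute convergence of the defining series (valid as $w=2m\omega_{\mathfrak{z}}\geq 4$). This yields $H_w(\mathfrak{z},z)=2\pi i\sum_{n\geq 0}\big(\sum_{(c,d)=1}(c\mathfrak{z}+d)^{-w}e^{-2\pi i n M\mathfrak{z}}\big)q^n$, the inner sum running over coprime pairs $(c,d)$ parametrizing $\Gamma_{\infty}\backslash\SL_2(\Z)$ (note both the summand and the phase $e^{-2\pi i n M\mathfrak{z}}$ are well defined on cosets since left translation by $\Gamma_{\infty}$ shifts $M\mathfrak{z}$ by an integer).

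Next I rewrite the summands arithmetically. Computing $\im(M\mathfrak{z})=\mathfrak{z}_2/N(\gamma)$ and $\re(M\mathfrak{z})$ with $\gamma:=c\mathfrak{z}+d$ shows directly that $(c\mathfrak{z}+d)^{-w}e^{-2\pi i n M\mathfrak{z}}=N(\gamma)^{-w/2}A_w(\gamma,n)e^{2\pi n\mathfrak{z}_2/N(\gamma)}$, matching the root of unity $A_w$ defined before the proposition. I then group the coprime pairs according to the primitive ideal $\mathfrak{b}=(\gamma)$ they generate. Using that $\{1,\mathfrak{z}\}$ is a $\Z$-basis of $\mathcal{O}_{\Q(\mathfrak{z})}$ and that coprimality of $(c,d)$ is equivalent to primitivity of $(\gamma)$, the generators of a fixed primitive $\mathfrak{b}$ arising in this way are exactly the unit multiples $\{u\gamma_0:u\in\mathcal{O}_{\Q(\mathfrak{z})}^{\times}\}$ of a chosen generator $\gamma_0$; since $|\mathcal{O}_{\Q(\mathfrak{z})}^{\times}|=2\omega_{\mathfrak{z}}$ for $\mathfrak{z}\in\{i,\rho\}$, the map from coprime pairs to primitive ideals is $2\omega_{\mathfrak{z}}$-to-one.

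The crux is to show that these $2\omega_{\mathfrak{z}}$ unit-multiple terms all coincide. Identifying the stabilizer $\Gamma_{\mathfrak{z}}\subset\SL_2(\Z)$ (of order $2\omega_{\mathfrak{z}}$) with $\mathcal{O}_{\Q(\mathfrak{z})}^{\times}$ via $V\mapsto c_V\mathfrak{z}+d_V$, the coprime pair corresponding to $u\gamma_0$ is represented by $M_0V$ with $c_V\mathfrak{z}+d_V=u$. The cocycle relation for the automorphy factor gives that the automorphy factor of $M_0V$ at $\mathfrak{z}$ is $u\gamma_0$, while $(M_0V)\mathfrak{z}=M_0(V\mathfrak{z})=M_0\mathfrak{z}$; hence the term attached to $u\gamma_0$ equals $u^{-w}\gamma_0^{-w}e^{-2\pi i n M_0\mathfrak{z}}$. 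Here the hypothesis that the weight is $w=2m\omega_{\mathfrak{z}}$ enters decisively: every unit satisfies $u^{2\omega_{\mathfrak{z}}}=1$, so $u^{-w}=1$ and the term is independent of $u$. Summing the $2\omega_{\mathfrak{z}}$ equal terms, the coefficient of $q^n$ becomes $4\pi i\,\omega_{\mathfrak{z}}\sum_{\mathfrak{b}}^{*}N(\mathfrak{b})^{-w/2}A_w(\mathfrak{b},n)e^{2\pi n\mathfrak{z}_2/N(\mathfrak{b})}$, where $A_w(\mathfrak{b},n):=A_w(\gamma_0,n)$ is independent of the generator by the same $u^{-w}=1$ identity.

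Finally I pass from $A_w$ to the real quantity $C_w$. Complex conjugation $\mathfrak{b}\mapsto\overline{\mathfrak{b}}$ is a norm-preserving involution on primitive ideals, and conjugating a generator while adjusting $a,b$ accordingly gives $A_w(\overline{\mathfrak{b}},n)=\overline{A_w(\mathfrak{b},n)}$. Since $N(\mathfrak{b})^{-w/2}e^{2\pi n\mathfrak{z}_2/N(\mathfrak{b})}$ depends only on the norm, the imaginary parts cancel across each conjugate pair and vanish on self-conjugate ideals, so $A_w(\mathfrak{b},n)$ may be replaced by $\re(A_w(\mathfrak{b},n))$ throughout the ideal sum. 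Invoking the identities $\re(A_{4m}(ci+d,n))=C_{4m}((ci+d),n)$ and $\re(A_{6m}(c\rho+d,n))=C_{6m}((c\rho+d),n)$ recalled before the proposition (for which $w=2m\omega_{\mathfrak{z}}$ specializes to $4m$ and $6m$) then produces exactly the asserted expansion. I expect the main obstacle to be the collapse of the $2\omega_{\mathfrak{z}}$ unit terms in the previous paragraph, where the divisibility of the weight by the order of the unit group at the special point is essential; the passage to the real part via the conjugation involution is the secondary technical point.
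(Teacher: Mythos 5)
Your proposal is correct and follows essentially the same route as the paper: unfold the Poincar\'e series into its $q$-expansion (the paper's Lemma \ref{firstFourier}), collapse the $2\omega_{\mathfrak{z}}$ coprime pairs generating a fixed primitive ideal using that the weight is divisible by the order of the unit group (the paper phrases this via right multiplication by the stabilizer generator $S$ and the factor $i^{-m}$, you via the cocycle relation --- the same computation), and then pass to $\re(A_m)=C_m$ by pairing each ideal with its conjugate (the paper's pairing of $(c,d)$ with $(d,c)$). The only differences are organizational, so no further comparison is needed.
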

Before proving Proposition \ref{realFourier}, we first write the Fourier coefficients of $H_m$ in a preliminary shape similar to \eqref{bn2}.
\begin{lemma}\label{firstFourier}
If $y>\im\left(M\mathfrak{z}\right)$ for all $M\in \SL_2(\Z)$, then, for $m\in\N$  with $m>2$,
$$
H_{m}\left(\mathfrak{z},z\right)=2\pi i \sum_{n=0}^{\infty} \sum_{\substack{c,d\in \Z\\ (c,d)=1}} \frac{e^{\frac{2\pi n\mathfrak{z}_2}{\left|c\mathfrak{z}+d\right|^2}} }{\left(c\mathfrak{z}+d\right)^m}e\left(-\frac{n}{\left|c\mathfrak{z}+d\right|^2}\left(ac|\mathfrak{z}|^2 + bd +\mathfrak{z}_1\left(ad+bc\right)\right)\right)q^n.
$$
\end{lemma}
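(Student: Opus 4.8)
The plan is to unfold the definition of $H_m(\mathfrak{z},z)$ and read off the Fourier expansion in the variable $z$ (with $\mathfrak{z}$ fixed) directly. First I would recall that the weight $m$ slash in the $\mathfrak{z}$-variable sends $\frac{1}{1-e^{2\pi i(z-\mathfrak{z})}}$ to $(c\mathfrak{z}+d)^{-m}\big(1-e^{2\pi i(z-M\mathfrak{z})}\big)^{-1}$ for $M=\left(\begin{smallmatrix}a&b\\c&d\end{smallmatrix}\right)$, so that
$$
H_m(\mathfrak{z},z)=2\pi i\sum_{M\in\Gamma_\infty\backslash\SL_2(\Z)}\frac{(c\mathfrak{z}+d)^{-m}}{1-e^{2\pi i(z-M\mathfrak{z})}}.
$$
Under the hypothesis $y>\im(M\mathfrak{z})$ we have $|e^{2\pi i(z-M\mathfrak{z})}|<1$, so each summand expands as a geometric series $\sum_{n\ge0}e^{-2\pi i n M\mathfrak{z}}q^n$. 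After justifying the interchange of the two sums, collecting powers of $q$ gives $H_m(\mathfrak{z},z)=2\pi i\sum_{n\ge 0}\big(\sum_{M}(c\mathfrak{z}+d)^{-m}e^{-2\pi i n M\mathfrak{z}}\big)q^n$, and it remains only to evaluate the inner sum over cosets.

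Next I would parametrize the cosets. Left multiplication by $\left(\begin{smallmatrix}1&\ell\\0&1\end{smallmatrix}\right)$ fixes the bottom row $(c,d)$, so $\Gamma_\infty\backslash\SL_2(\Z)$ is in bijection with the coprime pairs $(c,d)$, matching the sum in the statement. For the summand to descend to the quotient I must check it is $\Gamma_\infty$-invariant: the factor $(c\mathfrak{z}+d)^{-m}$ clearly is, and replacing $M$ by $\left(\begin{smallmatrix}1&\ell\\0&1\end{smallmatrix}\right)M$ shifts $M\mathfrak{z}$ to $M\mathfrak{z}+\ell$, which changes $e^{-2\pi i n M\mathfrak{z}}$ by $e^{-2\pi i n\ell}=1$ since $n,\ell\in\Z$. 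Thus I may choose any $a,b$ with $ad-bc=1$ to represent the coset indexed by $(c,d)$.

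The remaining step is an explicit computation of $M\mathfrak{z}$. Writing $M\mathfrak{z}=\frac{a\mathfrak{z}+b}{c\mathfrak{z}+d}=\frac{(a\mathfrak{z}+b)(c\overline{\mathfrak{z}}+d)}{|c\mathfrak{z}+d|^2}$ and separating real and imaginary parts yields $\im(M\mathfrak{z})=\mathfrak{z}_2/|c\mathfrak{z}+d|^2$ together with $\re(M\mathfrak{z})=\big(ac|\mathfrak{z}|^2+(ad+bc)\mathfrak{z}_1+bd\big)/|c\mathfrak{z}+d|^2$. Splitting $e^{-2\pi i n M\mathfrak{z}}=e^{2\pi n\im(M\mathfrak{z})}e^{-2\pi i n\re(M\mathfrak{z})}$ then produces exactly the factor $e^{2\pi n\mathfrak{z}_2/|c\mathfrak{z}+d|^2}$ along with the claimed exponential $e\big(-\tfrac{n}{|c\mathfrak{z}+d|^2}(ac|\mathfrak{z}|^2+bd+\mathfrak{z}_1(ad+bc))\big)$, which completes the identification with the right-hand side.

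I expect the only genuine obstacle to be the justification of absolute convergence and the rearrangement of summation. The key observation is that $\im(M\mathfrak{z})=\mathfrak{z}_2/|c\mathfrak{z}+d|^2$ is bounded above by $Y_0:=\max_M\im(M\mathfrak{z})<y$, uniformly in $(c,d)$; hence each inner geometric series in $n$ is dominated by the constant $(1-e^{-2\pi(y-Y_0)})^{-1}$, independent of $(c,d)$, while the outer sum is controlled by $\sum_{(c,d)}|c\mathfrak{z}+d|^{-m}$, which converges precisely because $m>2$. This establishes absolute convergence of the double series and licenses both the term-by-term geometric expansion and the interchange of summation by Fubini's theorem for series.
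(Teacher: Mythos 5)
Your argument is correct and follows essentially the same route as the paper: the paper quotes Petersson's expansion $H_m(\mathfrak{z},z)=2\pi i\sum_n\sum_M (c\mathfrak{z}+d)^{-m}e^{-2\pi i nM\mathfrak{z}}q^n$ from (3a.4) and (3a.7) of \cite{Pe1} and then performs exactly your computation of $\re(M\mathfrak{z})$ and $\im(M\mathfrak{z})$. The only difference is that you rederive that intermediate expansion from the definition via the geometric series and justify the interchange of sums yourself, which the paper outsources to Petersson; this makes your write-up slightly more self-contained but not a different proof.
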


\begin{proof}
Since $y>\im\left(M\mathfrak{z}\right)$, (3a.4) and (3a.7) of \cite{Pe1} imply that 
\begin{equation}\label{eqn:Hexp}
H_{m}\left(\mathfrak{z}, z\right) =2\pi i\sum_{n=0}^{\infty}\sum_{M\in \Gamma_{\infty}\backslash \SL_2(\Z)} \frac{e^{-2\pi i n M\mathfrak{z}}}{\left(c\mathfrak{z}+d\right)^{m}} q^n.
\end{equation}
The claim of the lemma follows by a direct calculation showing that
$$
e^{2\pi i M\mathfrak{z}}  = e^{-\frac{2\pi \mathfrak{z}_2}{\left|c\mathfrak{z}+d\right|^2}}e\left(\frac{1}{\left|c\mathfrak{z}+d\right|^2}\left(ac|\mathfrak{z}|^2 + bd +\mathfrak{z}_1\left(ad+bc\right)\right)\right).
$$
\end{proof}

\begin{proof}[Proof of Proposition \ref{realFourier}]
Since the arguments for $\mathfrak{z}=i$ and $\mathfrak{z}=\rho$ are analogous, we only consider the case $\mathfrak{z}=i$.  By a direct calculation,
we see that, for $M\in\SL_2 (\Z)$,
$$
\left[\frac{1}{1-e^{2\pi i (z-\mathfrak{z})}}\Big|_{m,\mathfrak{z}} MS\right]_{\mathfrak{z}=i} 
= i^{-m} \left[\frac{1}{1-e^{2\pi i (z-\mathfrak{z})}}\Big|_{m,\mathfrak{z}} M \right]_{\mathfrak{z}=i},
$$
with $S:= \left(\begin{smallmatrix}0&-1\\1&0\end{smallmatrix}\right)$.
Hence, if $4\mid m$, one may write 
\begin{equation}\label{eqn:HmodS}
H_{m}(i,z) = 4\pi i\omega_i \left[\sum_{M\in\Gamma_\infty\setminus \slz/<S>}\frac{1}{1-e^{2\pi i(z-\mathfrak{z})}}\Bigg|_{m,\mathfrak{z}} M\right]_{\mathfrak{z}=i}.
\end{equation}

Since right multiplication by $S$ sends $(c,d)\to (d,-c)$, it is natural to consider the corresponding equivalence relation on $\Z^2$ 
$$
(c,d)\sim (d,-c)\sim (-c,-d)\sim (-d,c).
$$
We then expand $H_m(i, z)$ as in \eqref{eqn:Hexp} and follow the proof of Lemma \ref{firstFourier} to obtain that 
$$
H_{m}(i,z) = 4\pi i\omega_i \sum_{n=0}^{\infty} \sum_{\substack{ (c,d)=1\\ (c,d)\in \Z^2/\sim}} B_{m,c,d}(i,n) q^n,
$$
where the $(c,d)$ contribution to the $n$th coefficient is given by
\begin{equation}\label{eqn:Bdef}
B_{m,c,d}(\mathfrak{z},n):=\frac{1}{\left(c\mathfrak{z}+d\right)^m}e^{\frac{2\pi n \mathfrak{z}_2}{\left|c\mathfrak{z}+d\right|^2}} e\left(-\frac{n}{\left|c\mathfrak{z}+d\right|^2}\left(ac|\mathfrak{z}|^2 + bd +\mathfrak{z}_1\left(ad+bc\right)\right)\right).
\end{equation}

We now rewrite the sum over $(c,d)\in \Z^2/\sim$.  We note that there is a natural correspondence between $(c,d)\in \Z^2$ and $ci+d\in \mathcal{O}_K$ with $K:=\Q(i)$.  Under this correspondence, the relation $\sim$ on $\Z^2$ corresponds to multiplication by units in $\mathcal{O}_K$.  It follows that the inner sum may be written as the sum over all (principal) ideals $\mathfrak{b}=\mathfrak{b}_{c,d}:=(ci+d)\subseteq\mathcal{O}_K$ with $(c,d)=1$.  Furthermore, 
since $\mathcal{O}_K$ is a PID, every ideal is of the form $\mathfrak{b}_{c,d}$ for some $c,d\in \Z$ and the restriction $(c,d)=1$ is equivalent to 
$\mathfrak{b}$ being a primitive ideal.  
The inner sum may therefore be written as $
\sum_{\mathfrak{b}\subseteq\mathcal{O}_K}^*
$ using the notation above.  In particular it follows that $B_{m,c,d}(i,n)$ is independent of the choice of generator for the ideal $\mathfrak{b}_{c,d}$.  From this, one may also conclude that $A_{m}(\mathfrak{b},n)$ is well-defined.  Indeed, since
$$
\lambda(c,d)=\lambda_{\mathfrak{z}}(c,d):=\left|c \mathfrak{z} +d\right|^2 = N\left(\mathfrak{b}\right),
$$
we see by rewriting 
\begin{equation}\label{eqn:BArewrite}
B_{m,c,d}(i,n)= \frac{e^{\frac{2\pi n}{N\left(\mathfrak{b}\right)}}}{N\left(\mathfrak{b}\right)^{\frac{m}{2}}}A_{m}\left(ci+d,n\right),
\end{equation}
that $A_{m}\left(ci+d,n\right)$ also only depends on the ideal $\mathfrak{b}$, verifying that $A_m\left(\mathfrak{b},n\right)$ is well-defined.  

It remains to rewrite $A_{m}\left(\mathfrak{b},n\right)$.  To do so, we combine the terms $(c,d)$ and $(d,c)$.  For this, note that 
$$
\left(\overline{ci+d}\right)^{4m}=\left(di+c\right)^{4m},\ \text{while}
$$ 
$$
\overline{e\left(-\frac{n}{\lambda(c,d)}\left(ac+ bd +\mathfrak{z}_1\left(ad+bc\right)\right)\right)}=e\left(\frac{n}{\lambda(d,c)}\left(ac +bd + \mathfrak{z}_1\left(ad+bc\right)\right)\right).
$$
The terms $(c,d)$ and $(d,c)$ hence contribute together $2\re\left(B_{m,c,d}(i,n)\right)$. 
Plugging this into \eqref{eqn:BArewrite} yields the statement of the proposition.
\end{proof}

\section{Higher-order poles and the proof of Theorem \ref{thm:mainmero}}\label{sec:HigherOrder}
In this section, we determine the Fourier expansions of meromorphic modular forms with second-order and third-order poles.  Combining this with the results from Secton \ref{sec:firstorder}, we prove Theorem \ref{thm:mainmero}.  Here the situation is more complicated because 
the  Poincar\'e series $
H_m$ do not suffice anymore.

\subsection{Poincar\'e series with higher-order poles}\label{HigherOrder}
Petersson investigated (see Section 4 of \cite{Pe1}) the modularity of derivatives of $H_{m}$ with respect to $\mathfrak{z}$ in order to construct meromorphic modular forms with higher-order poles in $z$.  
For $\mathfrak{z},z\in \H$, $m\in 2\N$, and $\nu\in -\N$, the relevant functions are defined by \begin{equation}\label{eqn:Ydef}
Y_{2-m,\nu}\left(\mathfrak{z},z\right):=\frac{1}{(-\nu-1)!}\frac{\partial^{-\nu-1}}{\partial X_{\mathfrak{z}}(\alpha)^{-\nu-1}}\left[\left(\alpha-\overline{\mathfrak{z}}\right)^m H_{m}\left(\alpha,z\right)\right]_{\alpha=\mathfrak{z}},
\end{equation}
where $X_\mathfrak{z}(\alpha):=\frac{\alpha-\mathfrak{z}}{\alpha-\overline{\mathfrak{z}}}$.  
These functions 
were used to classify weight $2-m$ meromorphic modular forms by their principal parts.  However, they are insufficient for our purposes since one cannot easily determine their Fourier coefficients.  For this reason, we construct another family of Poincar\'e series whose Fourier coefficients may be more easily computed.  For $\ell\in \N_0$, we hence formally define (recall that $\mathfrak{z}=\mathfrak{z}_1+i\mathfrak{z}_2$)

\begin{equation}\label{eqn:Htildegendef}
H_{m,\ell}(\mathfrak{z},z):=2\pi i \sum_{M\in\Gamma_\infty \backslash \SL_2(\Z)}\frac{\mathfrak{z}_2^{-\ell} }{1-e^{2\pi i(z-\mathfrak{z})}}\Bigg|_{m, \mathfrak{z}} M.
\end{equation}
Following the  proof of the convergence of $H_{m}(\mathfrak{z},z)$ 
in \cite{Pe1}, the functions $H_{m,\ell}(\mathfrak{z},z)$ converge absolutely if $m\geq 4+2\ell$ and are modular of weight $m$ in the $\mathfrak{z}$ variable by construction.  They also form a family of functions that (essentially) map to each other under the action of the \begin{it}Maass lowering operator\end{it} $L=L_{\mathfrak{z}}:=-2i\mathfrak{z}_2^2\frac{\partial}{\partial \overline{\mathfrak{z}}}. $ In particular, we have
\begin{equation}\label{lowerH}
L_\mathfrak{z}\left(H_{m, \ell}(\mathfrak{z}, z)\right)=-\ell H_{m-2, \ell-1}(\mathfrak{z}, z).
\end{equation}

The Fourier coefficients of the functions $H_{m,\ell}(\mathfrak{z},z)$ and their derivatives with respect to $z$ (slightly abusing notation for the partial derivative and omitting $\ell$ when it is zero)
$$
H_{m,\ell}^{(\Lpow)}\left(\mathfrak{z},z\right):=\frac{\partial^\Lpow}{\partial z^\Lpow}H_{m,\ell}\left(\mathfrak{z},z\right)
$$
are closely related to the coefficients of $H_{m}$ given in 
Proposition \ref{realFourier}. 
\begin{theorem}\label{genFC}
\noindent

\noindent
\begin{enumerate}[leftmargin=*]
\item[\rm(1)]
If $\ell,\Lpow\in \N_0$, $m\geq 4+2\ell$ and $y>\im(M\mathfrak{z})$ holds for every $M\in \SL_2(\Z)$, then 
$$
H_{m,\ell}^{(\Lpow)}\left(\mathfrak{z},z\right)=2\pi i \sum_{n=0}^{\infty} \sum_{\substack{c,d\in \Z\\ (c,d)=1}} \left(\frac{\lambda(c,d)}{\mathfrak{z}_2}\right)^{\ell}\left(2\pi i n\right)^\Lpow B_{m,c,d}(\mathfrak{z},n) q^n.
$$
\item[\rm(2)]
For every $\mathfrak{z}\in \left\{ i,\rho\right\}$, $\ell,\Lpow\in\N_0$, $m\geq \frac{4+2\ell}{k}$ and $y>\mathfrak{z}_2$, one has 
$$
H_{2m\omega_{\mathfrak{z}},\ell}^{(\Lpow)}\left(\mathfrak{z},z\right) = 4\pi i \omega_{\mathfrak{z}} \sum_{n=0}^{\infty} \;\sideset{}{^*}\sum_{\mathfrak{b}\subseteq\mathcal{O}_{\Q(\mathfrak{z})}}\frac{C_{{2m\omega_{\mathfrak{z}}}}\left(\mathfrak{b},n\right)}{N(\mathfrak{b})^{{m\omega_{\mathfrak{z}}}}}
\left(\frac{N(\mathfrak{b})}{\mathfrak{z}_2}\right)^\ell \left(2\pi i n\right)^{\Lpow}e^{\frac{2\pi n \mathfrak{z}_2}{N(\mathfrak{b})}} q^n.
$$

\end{enumerate}
\end{theorem}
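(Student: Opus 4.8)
The plan is to recognize that $H_{m,\ell}$ is simply $H_m$ with the extra non-holomorphic factor $\mathfrak{z}_2^{-\ell}$ carried through the slash operator, and then to transcribe the Fourier and ideal expansions already obtained for $H_m$. I begin with part (1). Writing $M=\left(\begin{smallmatrix}a&b\\c&d\end{smallmatrix}\right)$ and recalling that in \eqref{eqn:Htildegendef} the slash acts on the $\mathfrak{z}$-variable, I apply it to $\mathfrak{z}\mapsto \mathfrak{z}_2^{-\ell}/(1-e^{2\pi i(z-\mathfrak{z})})$. The only feature distinguishing this from the computation for $H_m$ is the transformation of $\mathfrak{z}_2^{-\ell}=(\im\mathfrak{z})^{-\ell}$; since $\im(M\mathfrak{z})=\mathfrak{z}_2/|c\mathfrak{z}+d|^2$, the $M$-th summand of $H_{m,\ell}$ equals $(\lambda(c,d)/\mathfrak{z}_2)^{\ell}$ times the $M$-th summand of $H_m$, where $\lambda(c,d)=|c\mathfrak{z}+d|^2$. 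As this factor is independent of $n$, I can then expand $1/(1-e^{2\pi i(z-M\mathfrak{z})})$ into its geometric series exactly as in the proof of Lemma \ref{firstFourier} (that is, via \eqref{eqn:Hexp}) and interchange the absolutely convergent sums over $n$ and over $\Gamma_\infty\backslash\SL_2(\Z)$, recovering the $\Lpow=0$ case with the coefficients $B_{m,c,d}(\mathfrak{z},n)$ of \eqref{eqn:Bdef}.

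To bring in the derivatives, I note that $B_{m,c,d}(\mathfrak{z},n)$ carries no $z$-dependence, so all of the $z$-dependence sits in $q^n=e^{2\pi i nz}$; hence $\partial^{\Lpow}/\partial z^{\Lpow}$ produces precisely the factor $(2\pi in)^{\Lpow}$. Term-by-term differentiation is justified because, in the stated range $y>\im(M\mathfrak{z})$ for all $M$, the series converges locally uniformly in $z$---the $n$-th term decays at least like $e^{-2\pi n(y-\mathfrak{z}_2)}$---so Weierstrass's theorem on differentiating a locally uniformly convergent series of holomorphic functions applies. This gives part (1).

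For part (2), I specialize to $\mathfrak{z}\in\{i,\rho\}$ and weight $2m\omega_{\mathfrak{z}}$ and simply re-run the proof of Proposition \ref{realFourier} with the two extra factors attached. The symmetry of $1/(1-e^{2\pi i(z-\mathfrak{z})})|_{m,\mathfrak{z}}$ under right multiplication by $S$ at the special point folds the coset sum into a sum over primitive ideals $\mathfrak{b}\subseteq\mathcal{O}_{\Q(\mathfrak{z})}$ (using that $\mathcal{O}_{\Q(\mathfrak{z})}$ is a PID), and combining the pair $(c,d),(d,c)$ replaces $B_{m,c,d}$ by $2\re(B_{m,c,d})$, i.e.\ by $C_{2m\omega_{\mathfrak{z}}}(\mathfrak{b},n)/N(\mathfrak{b})^{m\omega_{\mathfrak{z}}}$ via the identity $\re(A_{2m\omega_{\mathfrak{z}}}(\cdot,n))=C_{2m\omega_{\mathfrak{z}}}(\cdot,n)$. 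The point is that the new factor $(\lambda(c,d)/\mathfrak{z}_2)^{\ell}=(N(\mathfrak{b})/\mathfrak{z}_2)^{\ell}$ is real and depends only on the norm---hence only on $\mathfrak{b}$, and is invariant both under $(c,d)\mapsto(d,c)$ and under multiplication by units---while $(2\pi in)^{\Lpow}$ is constant in $(c,d)$. Both therefore survive the unit-folding and the real-part combination unchanged and factor out of the ideal sum in exactly the displayed shape.

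The computations are routine; the one place demanding care is the transformation law of the non-holomorphic weight factor $\mathfrak{z}_2^{-\ell}$ under the slash, since getting the power of $|c\mathfrak{z}+d|$ correct is what produces the precise factor $(\lambda(c,d)/\mathfrak{z}_2)^{\ell}$ (and in part (2) its ideal form $(N(\mathfrak{b})/\mathfrak{z}_2)^{\ell}$). Everything else---the interchange of summations, the term-by-term differentiation, and the ideal bookkeeping---is a direct transcription of the arguments already established for $H_m$, so I anticipate no genuine obstacle.
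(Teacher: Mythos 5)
Your proposal is correct and follows essentially the same route as the paper: the paper likewise observes that the slash of the extra factor $\mathfrak{z}_2^{-\ell}$ turns each summand of \eqref{eqn:Hexp} into $(\lambda(c,d)/\mathfrak{z}_2)^{\ell}$ times the old one via $\im(M\mathfrak{z})=\mathfrak{z}_2/|c\mathfrak{z}+d|^2$, notes that all $z$-dependence sits in $q^n$ so differentiation contributes $(2\pi i n)^{\Lpow}$, and deduces (2) by combining (1) with Proposition \ref{realFourier}. Your added justification of term-by-term differentiation and your remark that the new factors depend only on $N(\mathfrak{b})$ (hence survive the unit-folding) merely make explicit steps the paper leaves implicit.
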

\begin{remark}
For $\mathfrak{z}\in \{i,\rho\}$, Theorem \ref{genFC} (2) shows that 
the Fourier expansion of 
$H_{mk,\ell}^{(r)}(\mathfrak{z},z)$ is a constant multiple of $F_{mk,\ell,r}$, defined in (\ref{Fklr}). 
\end{remark}
\begin{proof}
\noindent

\noindent
(1) We expand $H_{m,\ell}$ as 
\begin{equation}\label{eqn:Htildeexp}
2\pi i \sum_{M\in \Gamma_{\infty}\backslash\SL_2(\Z)} \frac{1}{\im(M\mathfrak{z})^{\ell}} \cdot \frac{\left(c\mathfrak{z}+d\right)^{-m}}{1- e^{2\pi i (z-M\mathfrak{z})}} =\frac{2\pi i}{\mathfrak{z}_2^{\ell}}\sum_{n=0}^{\infty}\sum_{M\in \Gamma_{\infty}\backslash\SL_2(\Z)} \frac{\left|c\mathfrak{z}+d\right|^{2\ell}}{\left(c\mathfrak{z}+d\right)^{m}} e^{-2\pi i n M\mathfrak{z}}q^n.
\end{equation}
Since $\lambda(c,d)=\left|c\mathfrak{z}+d\right|^2$, we see that each summand occuring in \eqref{eqn:Hexp} is precisely multiplied by $
(\lambda(c,d)/\mathfrak{z}_2)^{\ell}
$.  Since the only dependence on $z$ in \eqref{eqn:Htildeexp} comes from $q^n$, the dependence on $r$ is clear.
This completes the proof of (1).  \\
\noindent (2) We directly obtain the claim from the $\mathfrak{z}=\rho$ and $\mathfrak{z}=i$ cases of (1) combined with Proposition \ref{realFourier}.
\end{proof}

We also require the following useful lemma about the growth of $H_{m,\ell}^{(\Lpow)}(\mathfrak{z}, z)$ as $\mathfrak{z}\to i\infty$ and $\mathfrak{z} \to z$.
\begin{lemma}\label{lem:Hgengrowth}
\noindent

\noindent
\begin{enumerate}[leftmargin=*]
\item[\rm(1)]
For every $z\in \H$, the functions $H_{m,\ell}^{(\Lpow)}(\mathfrak{z},z)$ vanish as $\mathfrak{z}\to i\infty$.
\item[\rm(2)] The following limit exists: 
$$
\lim_{\mathfrak{z}\to z}\left( H_{m,\ell}(\mathfrak{z},z) - \frac{\Res_{\mathfrak{z}=z}\left(H_{m}(\mathfrak{z},z)\right)}{\mathfrak{z}_2^{\ell}} \left(\mathfrak{z}-z\right)^{-1}\right).
$$

\item[\rm(3)] If $z\in \H$ is not an elliptic fixed point, then the following limit exists: 
$$
\lim_{\mathfrak{z}\to z}\left( H_{m,\ell}^{(r)}(\mathfrak{z},z) - 2\frac{r!}{\mathfrak{z}_2^{\ell}} \left(\mathfrak{z}-z\right)^{-r-1}\right).
$$
\end{enumerate}
\end{lemma}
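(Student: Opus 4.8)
The plan is to treat all three limits by expanding $H_{m,\ell}^{(r)}(\mathfrak{z},z)$ over the cosets $M\in\Gamma_\infty\backslash\SL_2(\Z)$ as in \eqref{eqn:Htildeexp}, isolating in each case the finitely many summands whose denominator $1-e^{2\pi i(z-M\mathfrak{z})}$ degenerates at the relevant limit point, and showing that the remaining summands form a tail that contributes continuously. Throughout, the absolute convergence guaranteed by $m\geq 4+2\ell$---which persists after differentiating in $z$ on the tail, away from the degenerate terms---lets me interchange the limit with the sum by dominated convergence, so the argument reduces to local estimates on individual summands. Writing the generic summand as $(c\mathfrak{z}+d)^{-m}\im(M\mathfrak{z})^{-\ell}\bigl(1-e^{2\pi i(z-M\mathfrak{z})}\bigr)^{-1}$ and its $z$-derivatives, I will repeatedly use the M\"obius difference identity $M\mathfrak{z}-z=(\mathfrak{z}-z)/((c\mathfrak{z}+d)(cz+d))$ valid when $Mz=z$, together with the Laurent expansion $2\pi i\,(1-e^{2\pi i w})^{-1}=-w^{-1}+(\text{holomorphic in }w)$ near $w=0$.

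For part (1), as $\mathfrak{z}\to i\infty$ I bound each summand. For the identity coset, $e^{2\pi i(z-\mathfrak{z})}\to\infty$, so $(1-e^{2\pi i(z-\mathfrak{z})})^{-1}$ and each of its $z$-derivatives tend to $0$, and the extra factor $\mathfrak{z}_2^{-\ell}$ only helps; for a coset with $c\neq0$ one has $(c\mathfrak{z}+d)^{-m}\im(M\mathfrak{z})^{-\ell}\asymp c^{2\ell-m}\mathfrak{z}_2^{\ell-m}\to0$ because $m\geq 4+2\ell>\ell$, while the exponential factor and its derivatives stay bounded since $M\mathfrak{z}$ approaches the real cusp $a/c$. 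Dominated convergence then gives the claim.

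For part (2) (the case $r=0$), the only summands singular at $\mathfrak{z}=z$ are the stabilizer terms, those $M$ with $Mz=z$; the complementary tail is smooth near $\mathfrak{z}=z$ and contributes to the limit. For each stabilizer term the M\"obius identity and the Laurent expansion extract a simple pole with residue $(cz+d)^{-m+2}\im(Mz)^{-\ell}$, and since $\im(Mz)=y$ for every stabilizer element these factors all equal $y^{-\ell}$; summing the leading residues over the stabilizer reproduces exactly $\Res_{\mathfrak{z}=z}(H_m)\cdot y^{-\ell}$ by the residue computation \eqref{res}. Because $\mathfrak{z}_2^{-\ell}$ is real-analytic with $\mathfrak{z}_2^{-\ell}=y^{-\ell}+O(\mathfrak{z}-z)$, subtracting $\Res_{\mathfrak{z}=z}(H_m)\,\mathfrak{z}_2^{-\ell}(\mathfrak{z}-z)^{-1}$ cancels precisely the simple pole, and the difference has a finite limit.

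For part (3), the hypothesis that $z$ is not an elliptic fixed point forces the stabilizer of $z$ to be $\{\pm I\}$, which are two distinct cosets modulo $\Gamma_\infty$; since $-I$ fixes every $\mathfrak{z}$ and scales by $(-1)^{-m}=1$, these two cosets contribute identical summands, producing the factor $2$. Differentiating the Laurent expansion $r$ times in $z$ shows the singular part of each of these two summands is exactly $\frac{r!}{\mathfrak{z}_2^{\ell}}(\mathfrak{z}-z)^{-r-1}$---no lower-order poles arise, since differentiating the holomorphic remainder keeps it holomorphic---so subtracting $2\frac{r!}{\mathfrak{z}_2^{\ell}}(\mathfrak{z}-z)^{-r-1}$ removes the entire singularity and the limit exists. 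The main obstacle is the local analysis at $\mathfrak{z}=z$ in parts (2) and (3): because the factor $\im(M\mathfrak{z})^{-\ell}$ is not holomorphic in $\mathfrak{z}$, one cannot argue by residue calculus alone but must track principal parts through explicit local expansions, and one must combine the several stabilizer terms correctly---matching Petersson's residue in (2) and recognizing that in (3) the restriction to non-elliptic $z$ is exactly what collapses the stabilizer to $\pm I$ and yields the clean coefficient $2$ (for elliptic $z$ the distinct $(cz+d)$ factors would produce a more intricate principal part after differentiation).
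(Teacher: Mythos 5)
Your argument is correct and follows essentially the same route as the paper: isolate the finitely many cosets whose summand degenerates at the limit point, Laurent-expand those via the M\"obius identity, and control the tail using the absolute convergence coming from $m\geq 4+2\ell$ (the paper makes your dominated-convergence step in (1) explicit by bounding the tail with a nonholomorphic Eisenstein series with the identity term removed). One caveat, which you share with the paper's own one-line proof of (2): for elliptic $z$ and $\ell\geq 1$ the stabilizer summands carry the antiholomorphic factor $\left(c\overline{\mathfrak{z}}+d\right)^{\ell}$, whose first-order term leaves a bounded but direction-dependent contribution proportional to $\overline{\mathfrak{z}-z}/(\mathfrak{z}-z)$ after the subtraction, so ``cancels precisely the simple pole'' really only gives $O(1)$ there rather than an actual limit --- which is exactly the precision of \eqref{eqn:Hgrowth} and all that is used later, since part (3) restricts to non-elliptic $z$, where the stabilizer is $\pm I$, $c=0$, and the issue disappears.
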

\begin{proof}
(1)   We first rewrite
\begin{equation}\label{eqn:Hboundexp}
H_{m,\ell}^{(r)}(\mathfrak{z},z) = \frac{4\pi i }{\mathfrak{z}_2^{\ell}}\left( -\frac{\partial^r}{\partial z^r}\frac{e^{2\pi i (\mathfrak{z}-z)}}{1-e^{2\pi i (\mathfrak{z}-z)}} + \sum_{\substack{M\in \Gamma_{\infty}\backslash \SL_2(\Z)\\ M\notin \pm \Gamma_{\infty}}} \frac{|c\mathfrak{z}+d|^{2\ell}}{\left(c\mathfrak{z}+d\right)^{m}}\frac{\partial^r}{\partial z^r}\left(\frac{1}{1-e^{2\pi i (z-M\mathfrak{z})}}\right)\right).
\end{equation}
Since $\mathfrak{z}\to i\infty$ and $z$ is fixed, we may assume that there exists a constant $K>1$ such that $ \frac{1}{K}<y<K\text{ and }\mathfrak{z}_2>K$.
We may then expand the first term in \eqref{eqn:Hboundexp} as 
$$
-\sum_{n=1}^{\infty}  \left(-2\pi i n\right)^r e^{2\pi i n(\mathfrak{z}-z)},
$$
which converges absolutely since $\mathfrak{z}_2-y>0$ and decays exponentially.

We finally bound the second term in \eqref{eqn:Hboundexp}.
For $M\notin \pm \Gamma_{\infty}$, we have 
$$
\im(M\mathfrak{z}) =\frac{\mathfrak{z}_2}{|c\mathfrak{z}+d|^2}<\frac{1}{c^2\mathfrak{z}_2}<\frac{1}{K}<y.
$$
Hence we may expand the geometric series in the second term of \eqref{eqn:Hboundexp} and estimate it against
\begin{align*}
&\ll \mathfrak{z}_2^{-\ell}\sum_{n=0}^{\infty} (
2\pi n
)^r e^{-2\pi n\left(y-\frac{1}{K}\right)}\sum_{\substack{(c,d)=1\\ c>0}} |c\mathfrak{z}+d|^{2\ell-m}\\
&=\mathfrak{z}_2^{-\frac{m}{2}}\sum_{n=0}^{\infty} (
2\pi n
)^r e^{-2\pi n\left(y-\frac{1}{K}\right)}\sum_{\substack{M\in \pm\Gamma_{\infty}\backslash \SL_2(\Z)\\ M\notin \pm \Gamma_{\infty}}} \im(M\mathfrak{z})^{\frac{m}{2}-\ell}.
\end{align*}
The first factor is bounded as $\mathfrak{z}_2\to \infty$, the second is independent of $\mathfrak{z}$, and the third decays exponentially because it is a weight zero nonholomorphic Eisenstein series without 
the term corresponding to the identity matrix.
  This completes the proof of (1).

\noindent
(2) The claim follows directly from the definition, since 
\begin{equation}\label{eqn:Hgrowth}
H_{m,\ell}(\mathfrak{z},z) = \frac{A_z}{\mathfrak{z}_2^{\ell}(\mathfrak{z}-z)} +O_{z}(1), \ \text{where}
\end{equation}
$$
A_z:=\Res_{\mathfrak{z}=z}H_{m}(\mathfrak{z},z).
$$
(3) Note that if $z$ is not an elliptic fixed point, then there exists a neighborhood around $z$ for which $A_z=2$ by \eqref{res}.  Therefore the dependence on $z$ in $O_z(1)$ of \eqref{eqn:Hgrowth} is holomorphic as a function of $z$ in this neighborhood.  It follows that
$$
H_{m,\ell}^{(\Lpow)}(\mathfrak{z},z) = 2\frac{r!}{\mathfrak{z}_2^{\ell}}(\mathfrak{z}-z)^{-r-1} +O_{z}(1), 
$$
completing the proof.
\end{proof}

As a special case of Lemma \ref{lem:Hgengrowth} (1), one concludes, for $\ell=0$, that $H_{m}^{(r)}$ is a meromorphic cusp form.  
We next show that these are the only such forms modulo cusp forms.
\begin{lemma}\label{lem:merospan}
If $z\in\H$ is not an elliptic fixed point, then the space of weight $m\in 2\N$ meromorphic cusp forms whose only poles modulo $\SL_2(\Z)$ are poles at $\mathfrak{z}=z$ of order at most $r\in\N_0$ is spanned by cusp forms and 
$$
\left\{ H_{m}^{(j)}(\mathfrak{z},z)\middle| 0\leq j\leq r-1\right\}.
$$
\end{lemma}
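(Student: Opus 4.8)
The plan is to prove the identity of spaces by the standard device of \emph{matching principal parts} at $\mathfrak{z}=z$, once the relevant singular behaviour of the generators $H_m^{(j)}(\mathfrak{z},z)$ has been pinned down. First I would record the basic properties of these generators viewed as functions of $\mathfrak{z}$ with $z$ fixed. Since the weight $m$ slash operator in the $\mathfrak{z}$-variable does not involve $z$, differentiation in $z$ commutes with it, so each $\mathfrak{z}\mapsto H_m^{(j)}(\mathfrak{z},z)$ inherits weight $m$ modularity from $H_m(\mathfrak{z},z)$. Its poles in $\mathfrak{z}$ occur only at the $\SL_2(\Z)$-translates of $z$, because $z$-differentiation raises the order of the poles of the individual summands but does not move them, and it vanishes as $\mathfrak{z}\to i\infty$ by Lemma \ref{lem:Hgengrowth}(1). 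Hence each $H_m^{(j)}(\cdot,z)$ is a weight $m$ meromorphic cusp form whose only poles modulo $\SL_2(\Z)$ lie at $\mathfrak{z}=z$. Crucially, since $z$ is not an elliptic fixed point, \eqref{res} gives $A_z:=\Res_{\mathfrak{z}=z}H_m(\mathfrak{z},z)=2$, and Lemma \ref{lem:Hgengrowth}(3) with $\ell=0$ yields, near $\mathfrak{z}=z$,
$$
H_m^{(j)}(\mathfrak{z},z)=\frac{2\,j!}{(\mathfrak{z}-z)^{j+1}}+O_z(1),
$$
so the principal part of $H_m^{(j)}(\cdot,z)$ at $\mathfrak{z}=z$ is the \emph{single} monomial $2\,j!\,(\mathfrak{z}-z)^{-j-1}$, a pole of order exactly $j+1\le r$. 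This already gives one inclusion: cusp forms and the $H_m^{(j)}(\cdot,z)$ with $0\le j\le r-1$ all lie in the stated space.

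For the reverse inclusion I would take an arbitrary $f$ in the space and expand its principal part at $\mathfrak{z}=z$ as $\sum_{j=1}^{r}c_{-j}(\mathfrak{z}-z)^{-j}$. By the previous paragraph the principal parts of $H_m^{(0)}(\cdot,z),\dots,H_m^{(r-1)}(\cdot,z)$ are the nonzero scalar multiples $2\cdot0!\,(\mathfrak{z}-z)^{-1},\dots,2(r-1)!\,(\mathfrak{z}-z)^{-r}$ of the monomials $(\mathfrak{z}-z)^{-1},\dots,(\mathfrak{z}-z)^{-r}$, and so they form a basis of the space of principal parts of order at most $r$ at $z$. Setting $\alpha_{k-1}:=c_{-k}/\bigl(2(k-1)!\bigr)$ for $1\le k\le r$, the difference
$$
g(\mathfrak{z}):=f(\mathfrak{z})-\sum_{k=1}^{r}\alpha_{k-1}H_m^{(k-1)}(\mathfrak{z},z)
$$
is holomorphic at $\mathfrak{z}=z$.

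It then remains to identify $g$ as a holomorphic cusp form. By construction $g$ is a weight $m$ meromorphic cusp form whose only possible poles modulo $\SL_2(\Z)$ are at $z$; being holomorphic at $z$, weight $m$ modularity (apply the transformation law near $\mathfrak{z}=z$, using $c z+d\neq0$ for $z\in\H$) forces it to be holomorphic at every $Mz$, so $g$ has no poles in $\H$ at all, and it vanishes at $i\infty$ since $f$ and the $H_m^{(k-1)}(\cdot,z)$ do. Hence $g$ is a holomorphic cusp form and $f=g+\sum_{k=1}^r\alpha_{k-1}H_m^{(k-1)}(\cdot,z)$ lies in the asserted span, which closes the argument.

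The only genuine input beyond bookkeeping is the single-term shape of the principal parts, namely Lemma \ref{lem:Hgengrowth}(3); this is precisely where the hypothesis that $z$ is not an elliptic fixed point is used, both to guarantee $A_z=2\neq0$ (so that $H_m^{(j)}$ truly has a pole of order $j+1$) and to ensure the regular part is holomorphic in $z$, so that $z$-differentiation does not manufacture intermediate-order poles. Everything else reduces to the triangular linear-algebra step, which is immediate from the distinct pole orders of the $H_m^{(j)}$. I expect no serious obstacle; the main point to be careful about is confirming that $z$-differentiation preserves modularity in $\mathfrak{z}$ and introduces no poles off the orbit of $z$ (together with the tacit convergence requirement $m\ge 4$ that makes the generators well defined).
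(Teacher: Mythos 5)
Your proof is correct and follows essentially the same route as the paper, whose own argument is the one-line remark that each $H_m(\mathfrak{z},z)$ is a meromorphic cusp form with a simple pole at $\mathfrak{z}=z$ (for $z$ not elliptic) and that the claim "follows immediately by taking derivatives." You have simply made explicit the principal-part matching via Lemma \ref{lem:Hgengrowth}(3) that the paper leaves implicit; this is a faithful filling-in of the details rather than a different method.
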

\begin{proof}
Since each function
$H_{m}(\mathfrak{z},z)$ is a meromorphic cusp form with a simple pole at $\mathfrak{z}=z$ whenever $z$ is not an elliptic fixed point, this follows immediately by taking derivatives.
\end{proof}

\subsection{Second-order poles}
We now specialize to meromorphic modular forms with second-order poles.  We obtain the Fourier expansion of $Y_{2-m,-2}\left(\mathfrak{z},z\right)$ by the argument suggested in Section \ref{HigherOrder}.  
\begin{proposition}\label{thm:Order2poles}
For every $\mathfrak{z}\in \{i,\rho\}$, $m>2/\omega_{\mathfrak{z}}$, and $y>\mathfrak{z}_2$, we have
$$
Y_{4-2m\omega_{\mathfrak{z}}, -2}(\mathfrak{z},z)=\frac{4\pi i \omega_{\mathfrak{z}}}{\left(2i\mathfrak{z}_2\right)^{1-2m\omega_{\mathfrak{z}}}} \sum_{n=0}^{\infty} \;\sideset{}{^*}\sum_{\mathfrak{b}\subseteq\mathcal{O}_{\Q(\mathfrak{z})}} \frac{C_{2m\omega_{\mathfrak{z}}}\left(\mathfrak{b},n\right)}{N(\mathfrak{b})^{m\omega_{\mathfrak{z}}}}\left(\frac{m\omega_{\mathfrak{z}}-1}{i\mathfrak{z}_2}N(\mathfrak{b})-2\pi i n\right)e^{\frac{2 \pi n\mathfrak{z}_2}{N(\mathfrak{b})}} q^n.
$$
\end{proposition}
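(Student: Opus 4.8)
The plan is to unwind the definition \eqref{eqn:Ydef} for $\nu=-2$ and to feed everything into the Fourier expansion \eqref{eqn:Hexp} of $H_m$. Throughout write $w:=2m\omega_{\mathfrak{z}}$, so that $Y_{4-w,-2}$ is built from $H_{w-2}$; the hypothesis $m>2/\omega_{\mathfrak{z}}$ is exactly the statement $w-2\geq 4$, which guarantees the absolute convergence of $H_{w-2}$. Since $\partial X_{\mathfrak{z}}(\alpha)/\partial\alpha=2i\mathfrak{z}_2(\alpha-\overline{\mathfrak{z}})^{-2}$, the chain rule gives $\partial/\partial X_{\mathfrak{z}}(\alpha)=(2i\mathfrak{z}_2)^{-1}(\alpha-\overline{\mathfrak{z}})^{2}\,\partial/\partial\alpha$. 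Applying this to $(\alpha-\overline{\mathfrak{z}})^{w-2}H_{w-2}(\alpha,z)$ and then setting $\alpha=\mathfrak{z}$, so that $\alpha-\overline{\mathfrak{z}}=2i\mathfrak{z}_2$, I obtain the clean expression
$$
Y_{4-w,-2}(\mathfrak{z},z)=(w-2)(2i\mathfrak{z}_2)^{w-2}H_{w-2}(\mathfrak{z},z)+(2i\mathfrak{z}_2)^{w-1}\frac{\partial}{\partial\mathfrak{z}}H_{w-2}(\mathfrak{z},z).
$$

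Next I would insert \eqref{eqn:Hexp} and differentiate termwise in $\mathfrak{z}$, which is justified for $y>\mathfrak{z}_2$ since there the series converges locally uniformly near $\alpha=\mathfrak{z}$. Using $\partial_{\mathfrak{z}}(M\mathfrak{z})=(c\mathfrak{z}+d)^{-2}$, the $\mathfrak{z}$-derivative of the $M$th summand $e^{-2\pi i nM\mathfrak{z}}(c\mathfrak{z}+d)^{-(w-2)}$ splits into a piece proportional to $(c\mathfrak{z}+d)^{-w}$ and one proportional to $c(c\mathfrak{z}+d)^{-(w-1)}$. The heart of the computation is the collapse of the three resulting powers of $c\mathfrak{z}+d$ into a single one: using the identity $c\mathfrak{z}+d-2i\mathfrak{z}_2 c=c\overline{\mathfrak{z}}+d$ one finds $(c\mathfrak{z}+d)(c\overline{\mathfrak{z}}+d)=|c\mathfrak{z}+d|^2=\lambda(c,d)=N(\mathfrak{b})$, so that the $H_{w-2}$ term combines with the $(c\mathfrak{z}+d)^{-(w-1)}$ term to produce exactly a factor $N(\mathfrak{b})$. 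After this simplification the whole $M$th contribution becomes $(2i\mathfrak{z}_2)^{w-1}(\frac{w-2}{2i\mathfrak{z}_2}N(\mathfrak{b})-2\pi in)B_{w,c,d}(\mathfrak{z},n)$, with $B_{w,c,d}$ as in \eqref{eqn:Bdef}, giving
$$
Y_{4-w,-2}(\mathfrak{z},z)=2\pi i(2i\mathfrak{z}_2)^{w-1}\sum_{n=0}^{\infty}\sum_{\substack{c,d\in\Z\\(c,d)=1}}\left(\frac{w-2}{2i\mathfrak{z}_2}N(\mathfrak{b})-2\pi in\right)B_{w,c,d}(\mathfrak{z},n)\,q^n.
$$

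Finally I would pass from the sum over $(c,d)$ to the sum over primitive ideals exactly as in the proof of Proposition \ref{realFourier}: for $\mathfrak{z}\in\{i,\rho\}$ the divisibility of the weight $w=2m\omega_{\mathfrak{z}}$ makes the stabilizer symmetry act trivially (as in \eqref{eqn:HmodS}), and pairing $(c,d)$ with $(d,c)$ replaces $B_{w,c,d}(\mathfrak{z},n)$ by its real part $C_{w}(\mathfrak{b},n)N(\mathfrak{b})^{-w/2}e^{2\pi n\mathfrak{z}_2/N(\mathfrak{b})}$ while introducing the overall constant $4\pi i\omega_{\mathfrak{z}}$. What makes this legitimate is that the bracketed weight $\frac{w-2}{2i\mathfrak{z}_2}N(\mathfrak{b})-2\pi in$ depends on $M$ only through $N(\mathfrak{b})=\lambda(c,d)$ and on $n$, hence is invariant under every identification used in Proposition \ref{realFourier} and simply rides along unchanged. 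Rewriting $\frac{w-2}{2i\mathfrak{z}_2}=\frac{m\omega_{\mathfrak{z}}-1}{i\mathfrak{z}_2}$ and $(2i\mathfrak{z}_2)^{w-1}=(2i\mathfrak{z}_2)^{-(1-w)}$ then yields the asserted formula. The only genuinely delicate point is the weight bookkeeping in the three-term collapse; everything else is a direct transcription of \eqref{eqn:Hexp} together with the symmetrization already carried out for Proposition \ref{realFourier}.
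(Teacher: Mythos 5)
Your proposal is correct and follows essentially the same route as the paper: the chain-rule computation from the definition of $Y_{4-w,-2}$ and the collapse via $c\mathfrak{z}+d-2i\mathfrak{z}_2c=c\overline{\mathfrak{z}}+d$ is exactly the content of Proposition \ref{prop:Ycoeff}, and your final symmetrization step is the content of Theorem \ref{genFC} (2). The only difference is organizational: the paper first identifies the two resulting pieces as the Poincar\'e series $H_{w,1}$ and $H_{w}^{(1)}$ (which it reuses elsewhere) and then substitutes their Fourier expansions, whereas you inline the whole computation at the level of the Fourier coefficients $B_{w,c,d}$.
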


Following Section \ref{HigherOrder}, to obtain the Fourier coefficients of meromorphic modular forms with second-order poles, it remains to write $Y_{2-m,-2}(\mathfrak{z},z)$ in terms of the functions $H_{m,\ell}^{(\Lpow)}(\mathfrak{z},z)$.  

\begin{proposition}\label{prop:Ycoeff}
We have 
\begin{equation*}\label{rewriteYm}
Y_{2-m,-2}(\mathfrak{z},z)=  \left(2i\mathfrak{z}_2\right)^{m+1}\left(\frac{m}{2i} H_{m+2,1}\left(\mathfrak{z},z\right) - H_{m+2}^{(1)}\left(\mathfrak{z},z\right)\right).
\end{equation*}
\end{proposition}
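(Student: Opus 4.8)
The plan is to unfold the definition \eqref{eqn:Ydef} of $Y_{2-m,-2}$, convert the derivative in the formal variable $X_{\mathfrak{z}}(\alpha)$ into an honest $\alpha$-derivative, and then recognize the resulting $\mathfrak{z}$-derivative of $H_m$ directly on the defining Poincar\'e series. For $\nu=-2$ we have $-\nu-1=1$, so only a single derivative occurs. Since $X_{\mathfrak{z}}'(\alpha)=2i\mathfrak{z}_2(\alpha-\overline{\mathfrak{z}})^{-2}$, the chain rule gives $\frac{\partial}{\partial X_{\mathfrak{z}}(\alpha)}=\frac{(\alpha-\overline{\mathfrak{z}})^2}{2i\mathfrak{z}_2}\frac{\partial}{\partial\alpha}$ (with $\mathfrak{z},\overline{\mathfrak{z}},\mathfrak{z}_2$ held fixed). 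Applying this to $(\alpha-\overline{\mathfrak{z}})^m H_m(\alpha,z)$ by the product rule and evaluating at $\alpha=\mathfrak{z}$, where $\alpha-\overline{\mathfrak{z}}=2i\mathfrak{z}_2$, yields
$$
Y_{2-m,-2}(\mathfrak{z},z)=m\left(2i\mathfrak{z}_2\right)^m H_m(\mathfrak{z},z)+\left(2i\mathfrak{z}_2\right)^{m+1}\left[\frac{\partial}{\partial\alpha}H_m(\alpha,z)\right]_{\alpha=\mathfrak{z}}.
$$
It then remains to rewrite the $\mathfrak{z}$-derivative of $H_m$ in terms of $H_{m+2,1}$ and $H_{m+2}^{(1)}$.

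To do so I differentiate the defining series termwise, which is justified by absolute and locally uniform convergence for $m$ large, writing $H_m(\mathfrak{z},z)=2\pi i\sum_{M}(c\mathfrak{z}+d)^{-m}(1-e^{2\pi i(z-M\mathfrak{z})})^{-1}$ with $M=\left(\begin{smallmatrix}a&b\\c&d\end{smallmatrix}\right)$ ranging over $\Gamma_{\infty}\backslash\SL_2(\Z)$. The product rule produces two pieces. Differentiating the geometric factor and using $\frac{\partial}{\partial\mathfrak{z}}(z-M\mathfrak{z})=-(c\mathfrak{z}+d)^{-2}=-(c\mathfrak{z}+d)^{-2}\frac{\partial}{\partial z}(z-M\mathfrak{z})$ shows this piece equals exactly $-H_{m+2}^{(1)}(\mathfrak{z},z)$. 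Differentiating the automorphy factor $(c\mathfrak{z}+d)^{-m}$ produces $-m$ times the auxiliary series $S:=2\pi i\sum_M c\,(c\mathfrak{z}+d)^{-(m+1)}(1-e^{2\pi i(z-M\mathfrak{z})})^{-1}$, so that the bracketed derivative above is $-mS-H_{m+2}^{(1)}$.

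The final step is to express $S$ through $H_m$ and $H_{m+2,1}$. Expanding \eqref{eqn:Htildegendef} at weight $m+2$, the weight-$(m+2)$ slash turns $\mathfrak{z}_2^{-1}$ into $\im(M\mathfrak{z})^{-1}=|c\mathfrak{z}+d|^2/\mathfrak{z}_2$, giving $H_{m+2,1}=\frac{2\pi i}{\mathfrak{z}_2}\sum_M (c\mathfrak{z}+d)^{-(m+1)}(c\overline{\mathfrak{z}}+d)(1-e^{2\pi i(z-M\mathfrak{z})})^{-1}$. The key algebraic identity $c\overline{\mathfrak{z}}+d=(c\mathfrak{z}+d)-2i\mathfrak{z}_2 c$ then splits this as $H_{m+2,1}=\frac{1}{\mathfrak{z}_2}H_m-2iS$, which I solve to obtain $S=\frac{1}{2i\mathfrak{z}_2}H_m-\frac{1}{2i}H_{m+2,1}$. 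Substituting $-mS-H_{m+2}^{(1)}$ for the bracketed derivative in the first display, the two terms proportional to $(2i\mathfrak{z}_2)^m H_m$ cancel identically, leaving precisely $\left(2i\mathfrak{z}_2\right)^{m+1}\left(\frac{m}{2i}H_{m+2,1}-H_{m+2}^{(1)}\right)$, as claimed.

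The main obstacle is the bookkeeping that forces this cancellation: reconciling the holomorphic $\mathfrak{z}$-derivative appearing in $Y$ with the $z$-derivative defining $H_{m+2}^{(1)}$, and tracking the interplay between the holomorphic automorphy factor $c\mathfrak{z}+d$ and the antiholomorphic $c\overline{\mathfrak{z}}+d$ introduced by the non-holomorphic weight-lowering factor $\mathfrak{z}_2^{-1}$ in $H_{m+2,1}$. The identity $c\overline{\mathfrak{z}}+d=(c\mathfrak{z}+d)-2i\mathfrak{z}_2 c$ is exactly what converts the spurious $H_m$ term generated by differentiating the automorphy factor into the term already present, producing the exact cancellation. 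Alternatively, one may verify the claim after expanding both sides into Fourier series via Theorem \ref{genFC}(1), where the same identity reduces the comparison to $(c\mathfrak{z}+d)^2-2i\mathfrak{z}_2 c(c\mathfrak{z}+d)=|c\mathfrak{z}+d|^2$.
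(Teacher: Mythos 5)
Your proposal is correct and follows essentially the same route as the paper: unfold the definition via the chain and product rules using $\frac{\partial}{\partial\alpha}X_{\mathfrak{z}}(\alpha)=2i\mathfrak{z}_2(\alpha-\overline{\mathfrak{z}})^{-2}$, differentiate the Poincar\'e series termwise to identify the $z$-derivative piece as $-H_{m+2}^{(1)}$, and use the identity $c\overline{\mathfrak{z}}+d=(c\mathfrak{z}+d)-2i\mathfrak{z}_2c$ to absorb the automorphy-factor term together with the $mH_m$ term into $\frac{m}{2i}H_{m+2,1}$. The paper performs the cancellation by grouping terms before summing rather than by isolating your auxiliary series $S$, but the algebra is identical.
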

\begin{proof}
Using the chain rule and the product rule, we compute 
\begin{equation}\label{computeY}
Y_{2-m,-2}(\mathfrak{z}, z)=\left[\frac{\partial\alpha}{\partial{X_\mathfrak{z}}
(\alpha)}\left(m\left(\alpha-\overline{\mathfrak{z}}\right)^{m-1}H_{m}(\alpha, z)+\left(\alpha-\overline{\mathfrak{z}}\right)^{m}\frac{\partial}{\partial\alpha} H_{m}(\alpha, z)\right)\right]_{\alpha=\mathfrak{z}}.
\end{equation}
We directly evaluate 
\begin{equation}\label{eqn:wdiff}
\frac{\partial}{\partial\alpha}{X_\mathfrak{z}}
(\alpha)=\frac{2i \mathfrak{z}_2}{\left(\alpha-\overline{\mathfrak{z}}\right)^2}.
\end{equation}
To rewrite the second term  in (\ref{computeY}), we compare termwise the derivatives of the functions $H_{m}(\alpha,z)$ with respect to $\alpha$ 
and their derivatives with respect to $z$.  
For this, we compute
\begin{align}\label{computeterms}
\frac{\partial}{\partial\alpha}\left(\frac{(c\alpha+d)^{-m}}{1-e^{2\pi i(z-M\alpha)}}\right)
&=-\frac{mc(c\alpha+d)^{-m-1}}{1-e^{2\pi i(z-M\alpha)}}+\frac{(c\alpha+d)^{-m}e^{2\pi i(z-M\alpha)}}{\left(1-e^{2\pi i(z-M\alpha)}\right)^2}\frac{(-2\pi i)}{(c\alpha+d)^2},\\
\nonumber
\frac{\partial}{\partial z}\frac{(c\alpha+d)^{-m-2}}{1-e^{2\pi i(z-M\alpha)}}&=\frac{(c\alpha+d)^{-m-2}2\pi ie^{2\pi i(z-M\alpha)}}{\left(1-e^{2\pi i(z-M\alpha)}\right)^{2}}.
\end{align}
We combine the first term in (\ref{computeterms}) with the first term coming from \eqref{computeY}.  After plugging in each term has the shape $(2i\mathfrak{z}_2)^{m}$ times
\[
m\frac{(c\mathfrak{z}+d)^{-m-1}}{1-e^{2\pi i(z-M\mathfrak{z})}}\left(c\mathfrak{z}+d-c\left(\mathfrak{z}-\overline{\mathfrak{z}}\right)\right)
=\frac{m(c\mathfrak{z}+d)^{-m-2}\text{Im}(M\mathfrak{z})^{-1}}{1-e^{2\pi i(z-M\mathfrak{z})}}\mathfrak{z}_2.
\]
This term contributes 
\[
\frac{m}{2i}\left(2i\mathfrak{z}_2\right)^{m+1}H_{m+2,1}(\mathfrak{z}, z).
\]
The second term in the first identity of \eqref{computeterms} produces
\[
-\left(2i\mathfrak{z}_2\right)^{m+1} H_{m+2}^{
 (1)
 }(\mathfrak{z}, z).
\]
Plugging back into \eqref{computeY} yields the statement of the proposition.
\end{proof}

We now have all of the preliminaries necessary to prove Proposition \ref{thm:Order2poles}.

\begin{proof}[Proof of Proposition \ref{thm:Order2poles}]
The theorem follows immediately by plugging the Fourier expansions of $H_{m+2,1}(\mathfrak{z},z)$ and $H_{m+2}(\mathfrak{z},z)$ from Theorem \ref{genFC} into \eqref{rewriteYm}.
\end{proof}

\subsection{Third-order poles}

Analogously to Proposition \ref{thm:Order2poles}, we prove the following Fourier expansions.
\begin{proposition}\label{thm:Order3poles}
For every $\mathfrak{z}\in \{i,\rho\}$, $m>4/\omega_{\mathfrak{z}}$ and $y>\mathfrak{z}_2$, one has 
\begin{multline*}
Y_{6-2m\omega_{\mathfrak{z}},-3}(\rho,z) = 2\pi i \omega_{\mathfrak{z}} \left(2i\mathfrak{z}_2\right)^{2m\omega_{\mathfrak{z}}-2}\sum_{n=0}^{\infty} \; \sideset{}{^*}\sum_{\mathfrak{b}\subseteq\mathcal{O}_{\Q(\mathfrak{z})}} \frac{C_{2m\omega_{\mathfrak{z}}}\left(\mathfrak{b},n\right)}{N(\mathfrak{b})^{m\omega_{\mathfrak{z}}}}\\
\times \left(-\frac{1}{3}(2m\omega_{\mathfrak{z}}-4)(2m\omega_{\mathfrak{z}}-3)N(\mathfrak{b})^2-\frac{2\pi n}{\mathfrak{z}_2}\left(2m\omega_{\mathfrak{z}}-3\right)N(\mathfrak{b}) - 4\pi^2n^2\right)e^{\frac{2\pi n\mathfrak{z}_2}{N(\mathfrak{b})}} q^n.
\end{multline*}
\end{proposition}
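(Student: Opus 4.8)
The plan is to follow the two-stage strategy already used for second-order poles: first establish the exact analog of Proposition~\ref{prop:Ycoeff}, rewriting $Y_{6-2m\omega_{\mathfrak{z}},-3}$ as an explicit linear combination of the series $H_{2m\omega_{\mathfrak{z}},\ell}^{(r)}$, and then substitute the Fourier expansions of Theorem~\ref{genFC}(2). Writing the weight parameter as $\mu := 2m\omega_{\mathfrak{z}}-4$, so that $Y_{6-2m\omega_{\mathfrak{z}},-3}=Y_{2-\mu,-3}$ and $\mu+4 = 2m\omega_{\mathfrak{z}}$, the key intermediate claim I would prove is
\begin{equation*}
Y_{2-\mu,-3}(\mathfrak{z},z) = (2i\mathfrak{z}_2)^{\mu+2}\left(\frac12 H_{\mu+4}^{(2)}(\mathfrak{z},z) - \frac{\mu+1}{2i} H_{\mu+4,1}^{(1)}(\mathfrak{z},z) - \frac{\mu(\mu+1)}{8} H_{\mu+4,2}(\mathfrak{z},z)\right),
\end{equation*}
the second-order analog of which is exactly Proposition~\ref{prop:Ycoeff}.

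To obtain this identity I start from \eqref{eqn:Ydef} with $\nu=-3$, so that $-\nu-1=2$ and $Y_{2-\mu,-3}=\tfrac12\frac{\partial^2}{\partial X_{\mathfrak{z}}(\alpha)^2}\left[g(\alpha)\right]_{\alpha=\mathfrak{z}}$, where $g(\alpha):=(\alpha-\overline{\mathfrak{z}})^{\mu}H_{\mu}(\alpha,z)$. Using $\frac{\partial\alpha}{\partial X_{\mathfrak{z}}(\alpha)}=(\alpha-\overline{\mathfrak{z}})^2/(2i\mathfrak{z}_2)$ from \eqref{eqn:wdiff} and evaluating at $\alpha=\mathfrak{z}$ (where $\alpha-\overline{\mathfrak{z}}=2i\mathfrak{z}_2$), the chain rule collapses the two $X_{\mathfrak{z}}$-derivatives into $Y_{2-\mu,-3} = (2i\mathfrak{z}_2)g'(\mathfrak{z}) + \tfrac12(2i\mathfrak{z}_2)^2 g''(\mathfrak{z})$, with primes denoting $\partial_{\alpha}$. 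I then work termwise, writing $g=2\pi i\sum_M T_M$ with $T_M=PG$, where $P:=(\alpha-\overline{\mathfrak{z}})^{\mu}(c\alpha+d)^{-\mu}$ and $G:=(1-e^{2\pi i(z-M\alpha)})^{-1}$, exactly in the spirit of \eqref{computeY}--\eqref{computeterms}.

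The computational engine is the observation that $G$ depends on $\alpha$ and $z$ only through $z-M\alpha$, which yields the clean relations $\partial_{\alpha}G = -(c\alpha+d)^{-2}\partial_z G$ and, upon iterating, $\partial^2_{\alpha}G = 2c(c\alpha+d)^{-3}\partial_z G + (c\alpha+d)^{-4}\partial^2_z G$; this is the second-order refinement of the comparison of $\alpha$- and $z$-derivatives performed in \eqref{computeterms}. For the polynomial factor one uses $(c\alpha+d)-c(\alpha-\overline{\mathfrak{z}})=c\overline{\mathfrak{z}}+d$, so that $P'$ and $P''$ produce the factors $c\overline{\mathfrak{z}}+d$ which combine with the $(c\mathfrak{z}+d)$ powers into $|c\mathfrak{z}+d|^2$ and $|c\mathfrak{z}+d|^4$ when evaluated at $\alpha=\mathfrak{z}$. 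Substituting $P',P'',G',G''$ into $(2i\mathfrak{z}_2)T_M'+\tfrac12(2i\mathfrak{z}_2)^2T_M''$ and recognizing the resulting termwise shapes $|c\mathfrak{z}+d|^{2\ell}(c\mathfrak{z}+d)^{-\mu-4}\partial_z^rG$ as the terms of $H_{\mu+4,\ell}^{(r)}$ then produces the rewrite identity above.

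The main obstacle is exactly the cancellation bookkeeping needed to show that the outcome is a pure weight $\mu+4$ object. Two collapses are essential. First, the terms proportional to $|c\mathfrak{z}+d|^2(c\mathfrak{z}+d)^{-\mu-2}G$ arising from $(2i\mathfrak{z}_2)g'(\mathfrak{z})$ and from the $P''$ part of $\tfrac12(2i\mathfrak{z}_2)^2g''(\mathfrak{z})$ cancel identically. Second, the three a priori distinct $\partial_zG$-contributions, carrying the incompatible factors $(c\mathfrak{z}+d)^{-\mu-2}$, $|c\mathfrak{z}+d|^2(c\mathfrak{z}+d)^{-\mu-4}$, and $c(c\mathfrak{z}+d)^{-\mu-3}$, must coalesce; here the identity $c(\mathfrak{z}-\overline{\mathfrak{z}})=(c\mathfrak{z}+d)-(c\overline{\mathfrak{z}}+d)$ eliminates the explicit $c$ and leaves the single clean term $-(\mu+1)|c\mathfrak{z}+d|^2(c\mathfrak{z}+d)^{-\mu-4}\partial_zG$. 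Once the rewrite identity is in hand, the proposition follows immediately by inserting Theorem~\ref{genFC}(2) for $H_{\mu+4}^{(2)}=H_{2m\omega_{\mathfrak{z}}}^{(2)}$, $H_{\mu+4,1}^{(1)}$, and $H_{\mu+4,2}$, which contribute the factors $(2\pi i n)^2$, $(N(\mathfrak{b})/\mathfrak{z}_2)(2\pi i n)$, and $(N(\mathfrak{b})/\mathfrak{z}_2)^2$ respectively, and then collecting the three pieces; for $\mathfrak{z}=\rho$ one has $\mathfrak{z}_2^2=3/4$, which turns the coefficient $1/(4\mathfrak{z}_2^2)$ of the $N(\mathfrak{b})^2$ term into the stated $1/3$.
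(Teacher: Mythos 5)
Your proposal is correct and follows essentially the same route as the paper: your intermediate identity is exactly Proposition \ref{prop:Y3coeff} (note $-\tfrac{\mu+1}{2i}=\tfrac{i(\mu+1)}{2}$), which the paper likewise establishes by the termwise $\alpha$- versus $z$-derivative comparison and then combines with Theorem \ref{genFC}. Your cancellation bookkeeping checks out, and your remark that the stated coefficient $1/3$ arises as $1/(4\mathfrak{z}_2^2)$ specialized to $\mathfrak{z}=\rho$ correctly accounts for the $\rho$ appearing on the left-hand side of the statement.
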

In order to prove Proposition \ref{thm:Order3poles},
 we again rewrite $Y_{2-m,-3}(\mathfrak{z},z)$ in terms of the functions $H_{m,\ell}^{(\Lpow)}(\mathfrak{z},z)$.  

\begin{proposition}\label{prop:Y3coeff}
We have 
$$
Y_{2-m,-3}(\mathfrak{z},z)=\frac{1}{2}\left(2i\mathfrak{z}_2\right)^{m+2}\Bigg( -\frac{m(m+1)}{4}H_{m+4,2}(\mathfrak{z},z)
+i(m+1)H_{m+4,1}^{(1)} \left(\mathfrak{z},z\right)
+H_{m+4}^{(2)}\left(\mathfrak{z},z\right)\Bigg).
$$
\end{proposition}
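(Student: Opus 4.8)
The plan is to follow the proof of Proposition~\ref{prop:Ycoeff} verbatim, but to push the chain rule through a \emph{second} derivative in $X_{\mathfrak{z}}(\alpha)$. Setting $g(\alpha):=(\alpha-\overline{\mathfrak{z}})^{m}H_{m}(\alpha,z)$, definition~\eqref{eqn:Ydef} reads $Y_{2-m,-3}(\mathfrak{z},z)=\tfrac12\left[\partial_{X_{\mathfrak{z}}(\alpha)}^{2}g(\alpha)\right]_{\alpha=\mathfrak{z}}$. First I would convert $\partial_{X_{\mathfrak{z}}(\alpha)}$ into $\partial_{\alpha}$ using \eqref{eqn:wdiff}, which gives the Jacobian $\phi(\alpha):=\partial\alpha/\partial X_{\mathfrak{z}}(\alpha)=(\alpha-\overline{\mathfrak{z}})^{2}/(2i\mathfrak{z}_{2})$. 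Because we differentiate twice, the derivative of $\phi$ also enters: from $\partial_{X_{\mathfrak{z}}(\alpha)}=\phi\,\partial_{\alpha}$ one gets $\partial_{X_{\mathfrak{z}}(\alpha)}^{2}=\phi\phi'\partial_{\alpha}+\phi^{2}\partial_{\alpha}^{2}$, and since $\phi(\mathfrak{z})=2i\mathfrak{z}_{2}$ and $\phi'(\mathfrak{z})=2$, evaluating at $\alpha=\mathfrak{z}$ yields $Y_{2-m,-3}(\mathfrak{z},z)=2i\mathfrak{z}_{2}\,g'(\mathfrak{z})+\tfrac12(2i\mathfrak{z}_{2})^{2}g''(\mathfrak{z})$. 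This appearance of $\phi'$ is the only genuinely new feature compared with the second-order case.

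Expanding $g'$ and $g''$ by the product rule and collecting the powers of $\alpha-\overline{\mathfrak{z}}=2i\mathfrak{z}_{2}$, I would reduce the proposition, after factoring out $\tfrac12(2i\mathfrak{z}_{2})^{m+2}$, to the single identity (suppressing the arguments $(\mathfrak{z},z)$ and evaluating all $\alpha$-derivatives at $\alpha=\mathfrak{z}$)
\begin{multline*}
\frac{m(m+1)}{(2i\mathfrak{z}_{2})^{2}}H_{m}+\frac{m+1}{i\mathfrak{z}_{2}}\,\partial_{\alpha}H_{m}+\partial_{\alpha}^{2}H_{m}\\
=-\frac{m(m+1)}{4}H_{m+4,2}+i(m+1)H_{m+4,1}^{(1)}+H_{m+4}^{(2)}.
\end{multline*}
The core of the argument is then a termwise computation as in~\eqref{computeterms}. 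The observation I would isolate first is that, on any function of $z-M\alpha$, one has $\partial_{\alpha}=-(c\alpha+d)^{-2}\partial_{z}$; iterating (and combining with the product rule for the automorphy factor $(c\alpha+d)^{-m}$) gives $\partial_{\alpha}^{2}=2c(c\alpha+d)^{-3}\partial_{z}+(c\alpha+d)^{-4}\partial_{z}^{2}$ on such functions. Applying this to the summand $2\pi i\,(c\alpha+d)^{-m}(1-e^{2\pi i(z-M\alpha)})^{-1}$ and sorting by how many derivatives land on the geometric-series factor splits the left-hand summand into three groups, proportional to $(1-e^{2\pi i(z-M\mathfrak{z})})^{-1}$ and to its first and second $z$-derivatives.

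Each group simplifies using $c\mathfrak{z}+d-c(\mathfrak{z}-\overline{\mathfrak{z}})=\overline{c\mathfrak{z}+d}$ together with $(c\mathfrak{z}+d)\overline{(c\mathfrak{z}+d)}=|c\mathfrak{z}+d|^{2}=\mathfrak{z}_{2}/\im(M\mathfrak{z})$. The three coefficients of the undifferentiated factor complete to the perfect square $(c\mathfrak{z}+d-2ic\mathfrak{z}_{2})^{2}=\overline{(c\mathfrak{z}+d)}^{2}$, producing an $\im(M\mathfrak{z})^{-2}$ and hence the summand of $H_{m+4,2}$; the two coefficients of the first $z$-derivative collapse to $\overline{c\mathfrak{z}+d}$, producing $\im(M\mathfrak{z})^{-1}$ and the summand of $H_{m+4,1}^{(1)}$; and the second $z$-derivative term is already the summand of $H_{m+4}^{(2)}$. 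Reading off the constants $-\tfrac{m(m+1)}{4}$, $i(m+1)$, and $1$ then finishes the proof. The step I expect to be the main obstacle is precisely this bookkeeping: unlike the single-derivative case there are genuine cross terms, and one must verify both that the mixed automorphy-factor/exponential contributions assemble into the \emph{single} series $H_{m+4,1}^{(1)}$ and that the lower-weight pieces $H_{m}$ and $\partial_{\alpha}H_{m}$---which carry spurious powers of $\alpha-\overline{\mathfrak{z}}$ and the ``wrong'' weight---are exactly what is needed to complete the square into the weight-$(m+4)$ form $H_{m+4,2}$. It is this completing-the-square cancellation that pins down the coefficients in the statement.
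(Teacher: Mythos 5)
Your proposal is correct and follows essentially the same route as the paper, which only sketches this "long and tedious calculation" as an analogue of the proof of Proposition \ref{prop:Ycoeff} and highlights precisely the simplification you use, namely $c(\mathfrak{z}-\overline{\mathfrak{z}})=(c\mathfrak{z}+d)-(c\overline{\mathfrak{z}}+d)$ followed by $|c\mathfrak{z}+d|^{2\ell}=\im(M\mathfrak{z})^{-\ell}\mathfrak{z}_2^{\ell}$ to assemble the series $H_{m+4,\ell}$. Your termwise bookkeeping (the perfect square $(c\overline{\mathfrak{z}}+d)^2$ for the $H_{m+4,2}$ group and the single factor $c\overline{\mathfrak{z}}+d$ for the $H_{m+4,1}^{(1)}$ group) checks out and reproduces the stated coefficients $-\tfrac{m(m+1)}{4}$, $i(m+1)$, and $1$.
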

\begin{proof}
The claim follows by a long and tedious calculation similar to the proof of Proposition \ref{prop:Ycoeff}.  We point out a few key features.  Whenever one takes the derivative with respect to $\mathfrak{z}$ of $c\mathfrak{z}+d$, one combines the resulting $c$ with $2i\mathfrak{z}_2=\mathfrak{z}-\overline{\mathfrak{z}}$ to obtain 
$$
c\left(\mathfrak{z}-\overline{\mathfrak{z}}\right)= c\mathfrak{z}+d - \left(c\overline{\mathfrak{z}}+d\right).
$$
This then results in a sum of terms of the shape 
(for some $m_1\in\N$ depending on the term)
$$
\left(c\mathfrak{z}+d\right)^{-m_1}\left(c\overline{\mathfrak{z}}+d\right)^{\ell} = \left|c\mathfrak{z}+d\right|^{2\ell} \left(c\mathfrak{z}+d\right)^{-m_1-\ell}.
$$
In each case, $m_1+\ell$ turns out to be $m+4$ and the Poincar\'e series $H_{m+4,\ell}(\mathfrak{z},z)$ then 
appear by rewriting
$$
\left|c\mathfrak{z}+d\right|^{2\ell}=\im(M\mathfrak{z})^{-\ell}\mathfrak{z}_2^{\ell}.
$$
After this, the calculation reduces to simplification of the arising terms.

\end{proof}

\subsection{Proof of Theorem \ref{thm:mainmero}}
We 
now have
 all of the ingredients needed to prove Theorem \ref{thm:mainmero}.
\begin{proof}[Proof of Theorem \ref{thm:mainmero}]
By Satz 3 of \cite{Pe1}, for $\nu\in \N$, every 
$F\in \mathcal{M}_{2-m,\mathfrak{z},\nu}$
may be written as a linear combination of the functions $H_{m}(\mathfrak{z},z)$ and $Y_{2-m,-\mu}(\mathfrak{z},z)$ with $2\leq \mu\leq \nu$. The main statement now follows by Propositions
 \ref{realFourier}, \ref{thm:Order2poles}, and \ref{thm:Order3poles}.
\end{proof}

\section{Meromorphic quasi-modular forms and the Proof of Theorem \ref{thm:mainE2}}\label{sec:quasi}
In this section, we investigate meromorphic quasi-modular forms 
that are powers of $E_2$ times meromorphic modular forms and prove Theorem \ref{thm:mainE2}.  We determine the Fourier coefficients of such forms by relating them to the functions $H_{m,\ell}^{(\Lpow)}
(\mathfrak{z},z)
$. 
\subsection{A general construction}\label{sec:E2gen}

In order to determine the Fourier coefficients of meromorphic quasi-modular forms, we first require certain relations between the functions $\mathfrak{z}\mapsto H_{m,\ell}^{(r)}(\mathfrak{z},z)$ and meromorphic cusp forms.  To state the result, we first define the weight $2$ modular completion of $E_2$ by  
\begin{equation}\label{E2comp}
\widehat{E}_2(z):=E_2(z)-\frac3{\pi y}.
\end{equation}

We require the following elementary and well-known properties of $E_2$. 

\begin{lemma}\label{E2lemma}
\noindent

\noindent
\begin{enumerate}[leftmargin=*]
\item[\rm(1)] The function $\widehat{E}_2$ vanishes at $i$ and $\rho$.
\item[\rm(2)] We have
\[
L\left(\widehat{E}_2\right)=\frac{3}{\pi}.
\]
\end{enumerate}
\end{lemma}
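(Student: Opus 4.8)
The plan is to verify both claims by working directly with the defining formula \eqref{E2comp}, using the standard quasi-modularity of $E_2$ together with the two well-known facts that $E_4(\rho)=0$ and $E_6(i)=0$.

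\medskip

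\noindent\textbf{Part (1).} First I would recall the (known) quasi-modular transformation of $E_2$, namely that under $z\mapsto Mz$ with $M=\left(\begin{smallmatrix}a&b\\c&d\end{smallmatrix}\right)\in\SL_2(\Z)$ one has $E_2(Mz)=(cz+d)^2 E_2(z)+\frac{6c(cz+d)}{\pi i}$, from which the correction term $-3/(\pi y)$ is precisely chosen so that $\widehat E_2(Mz)=(cz+d)^2\widehat E_2(z)$; that is, $\widehat E_2$ is a weight $2$ (nonholomorphic) modular form. Since $i$ is fixed by $S=\left(\begin{smallmatrix}0&-1\\1&0\end{smallmatrix}\right)$ and $\rho$ is fixed by $ST$, the weight $2$ modularity forces $\widehat E_2(\mathfrak z)=(c\mathfrak z+d)^2\widehat E_2(\mathfrak z)$ at the elliptic fixed point $\mathfrak z$, where the automorphy factor $(c\mathfrak z+d)^2$ is a nontrivial root of unity (of order $\omega_{\mathfrak z}>1$). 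Since that factor is $\ne 1$, we conclude $\widehat E_2(\mathfrak z)=0$ for $\mathfrak z\in\{i,\rho\}$. This is the cleanest route and avoids any direct evaluation of $E_2$.

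\medskip

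\noindent\textbf{Part (2).} Here I would apply the lowering operator $L=-2iy^2\frac{\partial}{\partial\bar z}$ directly to \eqref{E2comp}. Since $E_2(z)$ is holomorphic, $\frac{\partial}{\partial\bar z}E_2=0$, so only the nonholomorphic correction contributes:
\[
L\left(\widehat E_2\right)=-2iy^2\,\frac{\partial}{\partial\bar z}\left(-\frac{3}{\pi y}\right).
\]
Writing $y=\frac{z-\bar z}{2i}$ so that $\frac{\partial y}{\partial\bar z}=-\frac1{2i}$, the chain rule gives $\frac{\partial}{\partial\bar z}\!\left(y^{-1}\right)=-y^{-2}\cdot(-\tfrac1{2i})=\frac{1}{2i\,y^2}$, whence
\[
L\left(\widehat E_2\right)=-2iy^2\cdot\left(-\frac{3}{\pi}\right)\cdot\frac{1}{2i\,y^2}=\frac{3}{\pi},
\]
as claimed. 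This is a short direct computation with no real obstacle.

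\medskip

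\noindent Overall there is no genuine difficulty in this lemma: both parts are standard consequences of the quasi-modularity of $E_2$. The only point requiring slight care is Part (1), where one must invoke the specific structure of the elliptic fixed points $i$ and $\rho$ (that their stabilizers in $\PSL_2(\Z)$ are nontrivial, so the automorphy factor at the fixed point is a root of unity $\ne 1$) rather than attempting a numerical evaluation; alternatively one can cite $E_4(\rho)=E_6(i)=0$ and the classical relation expressing $\widehat E_2$ at fixed points in terms of these, but the root-of-unity argument is self-contained and is the approach I would present.
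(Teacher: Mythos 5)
Your proof is correct. The paper itself offers no proof of this lemma --- it is introduced with ``We require the following elementary and well-known properties of $E_2$'' and simply asserted --- so there is no argument in the source to compare against; your write-up supplies exactly the standard verification one would expect. Part (2) is a clean direct computation and matches the paper's normalization $L=-2iy^2\frac{\partial}{\partial\bar z}$ applied to \eqref{E2comp}. Part (1) via the weight-$2$ modularity of $\widehat{E}_2$ and the nontriviality of the automorphy factor at an elliptic fixed point is the right self-contained route. One small slip: with the paper's convention $\rho:=e^{\pi i/3}$, the stabilizing element is $TS=\left(\begin{smallmatrix}1&-1\\1&0\end{smallmatrix}\right)$ rather than $ST$ (which fixes $e^{2\pi i/3}$); this is immaterial, since either way the automorphy factor $(c\rho+d)^2=\rho^2=e^{2\pi i/3}\neq 1$ forces $\widehat{E}_2(\rho)=0$.
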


Next define
\[
\mathcal{F}_{m,\ell}^{(r)}(\mathfrak{z}, z):=\sum_{j=0}^\ell\binom{\ell}{j}\left(\frac{3}{\pi}\right)^{\ell-j}\widehat{E}_2^j(\mathfrak{z})H_{m-2j, \ell-j}^{(r)}(\mathfrak{z}, z).
\]
If $r=0$, we omit the dependence on $r$.
\begin{lemma}\label{Fmodular}
If $m\geq 4+2\ell$, then the functions $\mathfrak{z}\mapsto \mathcal{F}_{m,\ell}^{(r)}(\mathfrak{z}, z)$ are meromorphic cusp forms of weight $m$.
\end{lemma}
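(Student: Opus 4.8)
Lemma \ref{Fmodular} asserts that for $m \geq 4 + 2\ell$, the function $\mathfrak{z} \mapsto \mathcal{F}_{m,\ell}^{(r)}(\mathfrak{z},z)$, defined as $\sum_{j=0}^{\ell} \binom{\ell}{j}(3/\pi)^{\ell-j}\widehat{E}_2^j(\mathfrak{z}) H_{m-2j,\ell-j}^{(r)}(\mathfrak{z},z)$, is a meromorphic cusp form of weight $m$.

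Let me think about this. We need to show modularity of weight $m$ in the variable $\mathfrak{z}$, and we need the cusp condition (vanishing as $\mathfrak{z} \to i\infty$).

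The key structural fact available is the lowering operator relation (\ref{lowerH}): $L_{\mathfrak{z}}(H_{m,\ell}(\mathfrak{z},z)) = -\ell H_{m-2,\ell-1}(\mathfrak{z},z)$. Since differentiation in $z$ commutes with $L_{\mathfrak{z}}$ (which only involves $\partial/\partial\overline{\mathfrak{z}}$ and $\mathfrak{z}_2$), the same holds for the $r$-th derivatives: $L_{\mathfrak{z}}(H_{m,\ell}^{(r)}) = -\ell H_{m-2,\ell-1}^{(r)}$. Wait—I need to be careful. $L$ involves multiplication by $\mathfrak{z}_2^2$, so commuting $L_\mathfrak{z}$ past $\partial_z^r$ is fine since $\partial_z$ doesn't touch $\mathfrak{z}$ at all. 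Good.

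So the strategy is the standard "completed modular" trick: each individual $H_{m-2j,\ell-j}^{(r)}$ is modular of the wrong weight (weight $m-2j$) but as a non-holomorphic object. The combination with $\widehat{E}_2^j$ should produce a genuine weight-$m$ modular object. Actually wait—$H_{m,\ell}$ IS modular of weight $m$ by construction (it's built as a Poincaré series with slash operator). Let me re-read. Yes: "$H_{m,\ell}(\mathfrak{z},z)$ ... are modular of weight $m$ in the $\mathfrak{z}$ variable by construction." So each $H_{m-2j,\ell-j}^{(r)}$ is modular of weight $m-2j$ in $\mathfrak{z}$. And $\widehat{E}_2$ is weight-$2$ modular (non-holomorphic). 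So $\widehat{E}_2^j H_{m-2j,\ell-j}^{(r)}$ is weight $2j + (m-2j) = m$ modular, for every $j$! So the weight-$m$ modularity is actually immediate for each summand, hence for the whole sum.

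**The plan.** So modularity is the easy part. The real content must be holomorphicity—i.e., showing $\mathcal{F}_{m,\ell}^{(r)}$ is meromorphic (holomorphic away from poles at $\mathfrak{z} = Mz$) rather than merely real-analytic. Each summand is non-holomorphic because $\widehat{E}_2$ and $H_{m,\ell}$ both carry $\mathfrak{z}_2$-dependence. The plan is to apply the lowering operator $L_\mathfrak{z}$ to $\mathcal{F}_{m,\ell}^{(r)}$ and show it vanishes identically; since $L_\mathfrak{z}$ annihilates exactly the holomorphic (in $\mathfrak{z}$) functions, this proves meromorphy. Concretely, I would compute
\[
L_\mathfrak{z}\left(\mathcal{F}_{m,\ell}^{(r)}\right) = \sum_{j=0}^{\ell}\binom{\ell}{j}\left(\frac{3}{\pi}\right)^{\ell-j} L_\mathfrak{z}\left(\widehat{E}_2^j H_{m-2j,\ell-j}^{(r)}\right),
\]
expand each term by the product rule for $L_\mathfrak{z}$, invoke Lemma \ref{E2lemma}(2) giving $L(\widehat{E}_2) = 3/\pi$ (so $L(\widehat{E}_2^j) = j\widehat{E}_2^{j-1}\cdot\frac{3}{\pi}$), and invoke (\ref{lowerH}) giving $L_\mathfrak{z}(H_{m-2j,\ell-j}^{(r)}) = -(\ell-j)H_{m-2j-2,\ell-j-1}^{(r)}$. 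The resulting double-indexed sum should telescope/cancel by a binomial identity (reindexing $j \to j+1$ in one of the two families and matching $\binom{\ell}{j}(\ell-j)$ against $\binom{\ell}{j+1}(j+1)$). I would organize the two contributions and check that they cancel term-by-term.

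**The main obstacle and the cusp condition.** I expect the binomial bookkeeping in the $L_\mathfrak{z}$-cancellation to be the one genuinely delicate step—getting the index shifts and the $L(\widehat{E}_2^j) = \frac{3j}{\pi}\widehat{E}_2^{j-1}$ derivative exactly right so everything annihilates. One subtlety: I must confirm $L_\mathfrak{z}$ commutes with $\partial_z^r$ cleanly, which it does since $L_\mathfrak{z}$ acts only in $\mathfrak{z}$. After establishing meromorphy, the cusp condition follows from Lemma \ref{lem:Hgengrowth}(1): every $H_{m-2j,\ell-j}^{(r)}(\mathfrak{z},z) \to 0$ as $\mathfrak{z} \to i\infty$, while $\widehat{E}_2^j(\mathfrak{z})$ stays bounded (indeed $\widehat{E}_2 \to 1$), so each summand vanishes in the limit and thus so does $\mathcal{F}_{m,\ell}^{(r)}$. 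Finally, the convergence hypothesis $m \geq 4+2\ell$ guarantees $m-2j \geq 4 + 2(\ell-j)$ for every $0 \leq j \leq \ell$, so all the constituent Poincaré series $H_{m-2j,\ell-j}^{(r)}$ are absolutely convergent and the manipulations above are justified; I would note this range check explicitly at the start.
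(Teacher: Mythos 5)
Your proposal is correct and follows essentially the same route as the paper: weight-$m$ modularity of each summand $\widehat{E}_2^j H_{m-2j,\ell-j}^{(r)}$ by construction, meromorphy via $L_{\mathfrak{z}}(\mathcal{F}_{m,\ell}^{(r)})=0$ using $L(\widehat{E}_2)=3/\pi$, \eqref{lowerH}, and the binomial identity $\binom{\ell}{j+1}(j+1)=\binom{\ell}{j}(\ell-j)$ (the paper's $j\binom{\ell}{j}=\binom{\ell}{j-1}(\ell-j+1)$ after reindexing), and the cusp condition from Lemma \ref{lem:Hgengrowth} (1) together with the boundedness of $\widehat{E}_2$ at $i\infty$. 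The telescoping cancellation you flag as the delicate step does indeed go through exactly as you describe.
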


\begin{proof}
Firstly, $\mathfrak{z}\mapsto \mathcal{F}_{m,\ell}^{(r)}(\mathfrak{z},z)$ satisfies weight $m$ modularity by construction.  We next show that it is meromorphic.  For this, we use 
the fact that
\rm
 $j\binom{\ell}{j}=\binom{\ell}{j-1}(\ell-j+1)$ and apply the lowering operator. 
By 
\rm
 Lemma \ref{E2lemma} and \eqref{lowerH}, one obtains $L_{\mathfrak{z}}(\mathcal{F}_{m,\ell}^{(r)}(\mathfrak{z}, z))=0$ and thus the meromorphicity. To finish the proof, we are left to prove that $\mathcal{F}_{m,\ell}^{(r)}(\mathfrak{z}, z)$ vanishes as $\mathfrak{z}\to i\infty$.  
This follows by Lemma \ref{lem:Hgengrowth} (1) and the fact that $\widehat{E}_2$ is bounded as $\mathfrak{z}\to i\infty$.
\end{proof}

The following lemma is useful to determine the principal parts of $\mathfrak{z}\mapsto \mathcal{F}_{m,\ell}^{(r)}(\mathfrak{z},z)$.
\begin{lemma}\label{lem:Fmellgrowth}
If $m\geq 4+2\ell$ and $z$ is not an elliptic fixed point, then the limit 
$$
\lim_{\mathfrak{z}\to z}\left(\mathcal{F}_{m,\ell}^{(r)}(\mathfrak{z},z)-2E_2^{\ell}(\mathfrak{z})(\mathfrak{z}-z)^{-r-1}\right)
$$
exists.
\end{lemma}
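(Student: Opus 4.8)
\textbf{Proof proposal for Lemma \ref{lem:Fmellgrowth}.}
The plan is to analyze the behavior of each summand in the definition of $\mathcal{F}_{m,\ell}^{(r)}(\mathfrak{z},z)$ as $\mathfrak{z}\to z$ and to isolate the unique term that produces a pole of order $r+1$. Since
$$
\mathcal{F}_{m,\ell}^{(r)}(\mathfrak{z},z)=\sum_{j=0}^{\ell}\binom{\ell}{j}\left(\frac{3}{\pi}\right)^{\ell-j}\widehat{E}_2^{\,j}(\mathfrak{z})\,H_{m-2j,\ell-j}^{(r)}(\mathfrak{z},z),
$$
and each factor $\widehat{E}_2^{\,j}(\mathfrak{z})$ is holomorphic and bounded near $z$ (as $z$ is not an elliptic fixed point, in particular $z\neq i,\rho$), the only source of a singularity is the factor $H_{m-2j,\ell-j}^{(r)}(\mathfrak{z},z)$. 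First I would invoke Lemma \ref{lem:Hgengrowth} (3): for each $j$ with $0\le j\le\ell$, since $z$ is not an elliptic fixed point, the function $H_{m-2j,\ell-j}^{(r)}(\mathfrak{z},z)$ has a pole of order exactly $r+1$ at $\mathfrak{z}=z$, and
$$
H_{m-2j,\ell-j}^{(r)}(\mathfrak{z},z)=2\,\frac{r!}{\mathfrak{z}_2^{\ell-j}}(\mathfrak{z}-z)^{-r-1}+O_z(1).
$$

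The key computation is then to sum these principal parts. The coefficient of $(\mathfrak{z}-z)^{-r-1}$ in $\mathcal{F}_{m,\ell}^{(r)}(\mathfrak{z},z)$ is
$$
2r!\sum_{j=0}^{\ell}\binom{\ell}{j}\left(\frac{3}{\pi}\right)^{\ell-j}\widehat{E}_2^{\,j}(\mathfrak{z})\,\frac{1}{\mathfrak{z}_2^{\ell-j}}.
$$
Recalling the definition \eqref{E2comp}, namely $\widehat{E}_2(\mathfrak{z})=E_2(\mathfrak{z})-\frac{3}{\pi\mathfrak{z}_2}$, I would recognize the sum over $j$ as a binomial expansion: writing $\frac{3}{\pi\mathfrak{z}_2}$ for the $(\ell-j)$-factor, the sum collapses via the binomial theorem to
$$
\sum_{j=0}^{\ell}\binom{\ell}{j}\left(\frac{3}{\pi\mathfrak{z}_2}\right)^{\ell-j}\widehat{E}_2^{\,j}(\mathfrak{z})=\left(\widehat{E}_2(\mathfrak{z})+\frac{3}{\pi\mathfrak{z}_2}\right)^{\ell}=E_2^{\ell}(\mathfrak{z}).
$$
Hence the coefficient of $(\mathfrak{z}-z)^{-r-1}$ is exactly $2r!\,E_2^{\ell}(\mathfrak{z})$. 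This is the crux of the argument and explains the shape of the subtracted term in the statement.

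There is one subtlety to address carefully: in the statement, the subtracted quantity is $2E_2^{\ell}(\mathfrak{z})(\mathfrak{z}-z)^{-r-1}$, where $E_2^{\ell}(\mathfrak{z})$ depends on $\mathfrak{z}$ rather than being frozen at $z$. Since I want the full difference to remain bounded as $\mathfrak{z}\to z$, I would note that $E_2^{\ell}(\mathfrak{z})=E_2^{\ell}(z)+O(\mathfrak{z}-z)$ by holomorphicity of $E_2$ near $z$, so that $2E_2^{\ell}(\mathfrak{z})(\mathfrak{z}-z)^{-r-1}$ and $2E_2^{\ell}(z)(\mathfrak{z}-z)^{-r-1}$ differ by a function with a pole of order at most $r$; the lower-order polar terms (of orders $1$ through $r$) arising from the Taylor expansion of $E_2^{\ell}(\mathfrak{z})$ and from the $O_z(1)$ error terms must still be shown to cancel or to be subsumed. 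I expect the main obstacle to be precisely this bookkeeping of the intermediate-order poles: Lemma \ref{lem:Hgengrowth} (3) as stated controls only the leading pole and an $O_z(1)$ remainder, so one needs that its remainder is in fact holomorphic (not merely bounded) near $z$, which follows from the same argument as in the proof of Lemma \ref{lem:Hgengrowth} (3), where the $O_z(1)$ term is holomorphic in $z$ in a neighborhood. Granting this holomorphicity, subtracting off the single leading term $2E_2^{\ell}(\mathfrak{z})(\mathfrak{z}-z)^{-r-1}$ leaves a function that is holomorphic at $\mathfrak{z}=z$, and the limit exists.
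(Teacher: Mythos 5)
Your proof is correct and follows essentially the same route as the paper's: apply Lemma \ref{lem:Hgengrowth} (3) to each summand of $\mathcal{F}_{m,\ell}^{(r)}$ and collapse the resulting principal parts via the binomial theorem into $E_2^{\ell}(\mathfrak{z})$. Two small remarks: your closing discussion of intermediate-order poles is unnecessary, since the subtracted quantity is the $\mathfrak{z}$-dependent expression $2E_2^{\ell}(\mathfrak{z})(\mathfrak{z}-z)^{-r-1}$, which is exactly the sum of the $\mathfrak{z}$-dependent principal parts supplied by Lemma \ref{lem:Hgengrowth} (3) multiplied by the continuous factors $\widehat{E}_2^{j}(\mathfrak{z})$, so each summand of the difference already has a limit and no Taylor expansion or holomorphicity of the remainder is needed; and your computation correctly produces the factor $r!$ in the leading coefficient, which the statement of the lemma and the paper's own proof silently drop.
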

\begin{proof}
By Lemma \ref{lem:Hgengrowth} (3), we have 
\begin{align*}
\mathcal{F}_{m,\ell}^{(r)}(\mathfrak{z},z) &=2\sum_{j=0}^{\ell} \binom{\ell}{j} \left(\frac{3}{\pi \mathfrak{z}_2}\right)^{\ell-j} \widehat{E}_2^j(\mathfrak{z}) \left(\mathfrak{z}-z\right)^{-r-1}+ O(1)\\
&= 2\left(\frac{3}{\pi \mathfrak{z}_2}+ \widehat{E}_2(\mathfrak{z})\right)^\ell(\mathfrak{z}-z)^{-r-1}+O(1) = 2E_2^{\ell}(\mathfrak{z})(\mathfrak{z}-z)^{-r-1} + O(1).
\end{align*}
This concludes the proof.
\end{proof}

Lemmas \ref{Fmodular} and \ref{lem:Fmellgrowth} yield another useful representation of $\mathcal{F}_{m,\ell}^{(r)}(\mathfrak{z},z)$ as linear combinations of meromorphic cusp forms $H_{m}^{(j)}(\mathfrak{z},z)$ with $0\leq j\leq r$, where the coefficients of the linear combination are products of powers of $E_2(z)$ and derivatives of $E_2(z)$.
\begin{proposition}\label{thm:Htildegen}
If $m\geq 4+2\ell$, $\Lpow\in\N_0$, and $z$ is not an elliptic fixed point, then
$$
\mathfrak{z}\mapsto\mathcal{F}_{m,\ell}^{(r)}(\mathfrak{z},z) -\frac{1}{\Lpow!}\frac{\partial^{\Lpow}}{\partial z^{\Lpow}}\Big( E_2^{\ell}(z)H_m(\mathfrak{z},z)\Big)
$$
are cusp forms of weight $m$.
\end{proposition}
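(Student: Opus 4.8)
The plan is to show that the difference
\[
G(\mathfrak{z},z):=\mathcal{F}_{m,\ell}^{(r)}(\mathfrak{z},z)-\frac{1}{r!}\frac{\partial^{r}}{\partial z^{r}}\Big(E_2^{\ell}(z)H_m(\mathfrak{z},z)\Big)
\]
is, as a function of $\mathfrak{z}$, a weight $m$ cusp form, by verifying the three defining properties in turn: weight $m$ modularity in $\mathfrak{z}$, holomorphy on all of $\H$ (equivalently, cancellation of the poles at $\mathfrak{z}=z$), and vanishing as $\mathfrak{z}\to i\infty$.

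Modularity and the cusp condition are the easy steps. By Lemma \ref{Fmodular}, $\mathfrak{z}\mapsto\mathcal{F}_{m,\ell}^{(r)}(\mathfrak{z},z)$ is weight $m$ modular. For the second term, recall that $\mathfrak{z}\mapsto H_m(\mathfrak{z},z)$ is weight $m$ modular and that the operator $\frac{1}{r!}\partial_z^r$ together with multiplication by the $\mathfrak{z}$-independent scalar $E_2^{\ell}(z)$ acts only on the $z$-variable; since the automorphy factor $(c\mathfrak{z}+d)^m$ does not involve $z$, it passes through these operations, so the second term is again weight $m$ modular in $\mathfrak{z}$, and hence so is $G$. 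For the vanishing at the cusp, $\mathcal{F}_{m,\ell}^{(r)}$ vanishes as $\mathfrak{z}\to i\infty$ by (the proof of) Lemma \ref{Fmodular}, while expanding $\partial_z^r(E_2^{\ell}(z)H_m(\mathfrak{z},z))$ by the Leibniz rule produces a finite linear combination of terms $(E_2^{\ell})^{(j)}(z)H_m^{(r-j)}(\mathfrak{z},z)$, each of which vanishes as $\mathfrak{z}\to i\infty$ by Lemma \ref{lem:Hgengrowth}~(1); thus $G$ vanishes at the cusp.

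The heart of the argument, and the step I expect to be the main obstacle, is the cancellation of the principal parts at $\mathfrak{z}=z$. By modularity it suffices to treat the pole at $\mathfrak{z}=z$, and since $z$ is not an elliptic fixed point we may work in a neighborhood in which $\Res_{\mathfrak{z}=z}H_m(\mathfrak{z},z)=2$ by \eqref{res}. On the one hand, Lemma \ref{lem:Fmellgrowth} shows that $\mathcal{F}_{m,\ell}^{(r)}(\mathfrak{z},z)$ and $2E_2^{\ell}(\mathfrak{z})(\mathfrak{z}-z)^{-r-1}$ have the same principal part at $\mathfrak{z}=z$; expanding $E_2^{\ell}(\mathfrak{z})$ in a Taylor series about $\mathfrak{z}=z$, this principal part is $2\sum_{k=0}^{r}\frac{(E_2^{\ell})^{(k)}(z)}{k!}(\mathfrak{z}-z)^{k-r-1}$. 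On the other hand, writing $H_m(\mathfrak{z},z)=2(\mathfrak{z}-z)^{-1}+\Phi(\mathfrak{z},z)$ with $\Phi$ jointly holomorphic near the diagonal, the contribution of $\Phi$ to $\frac{1}{r!}\partial_z^r(E_2^{\ell}(z)H_m(\mathfrak{z},z))$ is holomorphic at $\mathfrak{z}=z$, while the singular part gives, using $\partial_z^j(\mathfrak{z}-z)^{-1}=j!(\mathfrak{z}-z)^{-j-1}$ and the Leibniz rule,
\[
\frac{1}{r!}\frac{\partial^r}{\partial z^r}\!\left(\frac{2E_2^{\ell}(z)}{\mathfrak{z}-z}\right)=2\sum_{k=0}^{r}\frac{(E_2^{\ell})^{(k)}(z)}{k!}(\mathfrak{z}-z)^{k-r-1}.
\]
These two principal parts coincide, so $G$ is holomorphic at $\mathfrak{z}=z$, and hence on all of $\H$.

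Combining the three steps, $G$ is a weight $m$ modular form, holomorphic on $\H$ and vanishing at $i\infty$, i.e.\ a cusp form, as claimed. The only genuinely delicate point is the bookkeeping in the previous paragraph: one must justify that the regular part $\Phi$ of $H_m$ is jointly holomorphic in a neighborhood of the diagonal (so that its $z$-derivatives remain holomorphic in $\mathfrak{z}$ at $\mathfrak{z}=z$, and therefore contribute nothing to the principal part), and then match the two binomial expansions term by term.
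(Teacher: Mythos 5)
Your proof is correct and takes essentially the same approach as the paper: the heart of both arguments is matching the principal part of $\mathcal{F}_{m,\ell}^{(r)}$ at $\mathfrak{z}=z$, given by Lemma \ref{lem:Fmellgrowth} and the Taylor expansion of $E_2^{\ell}$, against that of $\frac{1}{r!}\frac{\partial^{r}}{\partial z^{r}}\big(E_2^{\ell}(z)H_m(\mathfrak{z},z)\big)$ computed via the Leibniz rule. The only cosmetic difference is that the paper routes through Lemma \ref{lem:merospan} to place $\mathcal{F}_{m,\ell}^{(r)}$ in the span of cusp forms and the $H_m^{(j)}$, $0\leq j\leq r$, and then solves for the coefficients, whereas you verify modularity, holomorphy, and cuspidality of the difference directly; the joint holomorphy of the regular part of $H_m$ near the diagonal that you flag as the delicate point is exactly what the paper's Lemma \ref{lem:Hgengrowth} (3) supplies.
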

\begin{proof}
By Lemma \ref{Fmodular}, we have that the functions $\mathfrak{z}\mapsto \mathcal{F}_{m,\ell}^{(r)}(\mathfrak{z},z)$ are meromorphic cusp forms of weight $m$ 
and Lemma \ref{lem:Fmellgrowth} implies that the pole at $\mathfrak{z}=z$ of each function has order at most $r+1$.
  By Lemma \ref{lem:merospan}, this space is spanned by cusp forms and $\{ H_{m}^{(j)}(\mathfrak{z},z)| 0\leq j\leq r\}$.
 To determine the explicit coefficients $C_{\ell,0}(z),\dots, C_{\ell,r}(z)$ so that 
$$
\mathcal{F}_{m,\ell}^{(r)}(\mathfrak{z},z)-\sum_{j=0}^{r} C_{\ell,j}(z)H_{m}^{(j)}(\mathfrak{z},z)
$$
are cusp forms, we plug in Lemma \ref{lem:Fmellgrowth} 
and the Taylor expansion of $E_2^{\ell}$
around $\mathfrak{z}=z$.
  This yields that
\begin{equation}\label{eqn:Fgenexp}
\mathcal{F}_{m,\ell}^{(r)}(\mathfrak{z},z)=2\sum_{j=0}^{r}\frac{1}{j!}
\frac{\partial^j}{\partial z^j}
\Big(E_2^{\ell}(z)\Big)   \left(\mathfrak{z}-z\right)^{-r+j-1}+ O(1).
\end{equation}
Moreover, by
 Lemma \ref{lem:Hgengrowth} (3), we have 
$$
H_{m}^{(r-j)}(\mathfrak{z},z) = 2\left(r-j\right)!(\mathfrak{z}-z)^{-r+j-1} + O(1).
$$
Hence 
$$
C_{\ell,j}(z) = \frac{1}{j!(r-j)!}
\frac{\partial^j}{\partial z^j}
 \Big(E_2^{\ell}(z)\Big),
$$
which yields the statement of the theorem after using the product rule.  
\end{proof}

\subsection{Proof of Theorem \ref{thm:mainE2}}
In this subsection, we investigate powers of $E_2$ multiplied by meromorphic modular forms with at most second-order poles 
 to prove Theorem \ref{thm:mainE2}.  For small weights, we require a slightly more explicit version of 
Proposition \ref{thm:Htildegen},
 given in the following corollary.
\begin{corollary}\label{cor:Htildegen}
\noindent

\noindent
\begin{enumerate}[leftmargin=*]
\item[\rm(1)]
If
 $m\in \left\{6,8,10,14\right\}$, then 
$$
H_{m,1}\left(\mathfrak{z},z\right)=\frac{\pi}{3}E_2(z)H_{m}\left(\mathfrak{z},z\right) -\frac{\pi}{3}\widehat{E}_2(\mathfrak{z})H_{m-2}\left(\mathfrak{z},z\right).
$$
Moreover, if $m\in \left\{8,10,14\right\}$, then 
$$
H_{m,2}\left(\mathfrak{z},z\right)=\left(\frac{\pi}{3}\right)^2E_2^2(z)H_{m}\left(\mathfrak{z},z\right)-\frac{2\pi}{3}\widehat{E}_2(\mathfrak{z})H_{m-2,1}(\mathfrak{z},z) -\left(\frac{\pi}{3}\right)^2\widehat{E}_2^2(\mathfrak{z})H_{m-4}\left(\mathfrak{z},z\right).
$$
\item[\rm(2)]
If $m\in\{ 6,8, 10,14\}$, then for all $z,\mathfrak{z}\in \H$ we have 
$$
E_2(z)H_{m}^{(1)}(\mathfrak{z},z)=\frac{3}{\pi} H_{m,1}^{(1)}(\mathfrak{z},z)
 + \widehat{E}_2(\mathfrak{z}) H_{m-2}^{(1)}(\mathfrak{z},z)- \left(\frac{\pi i }{6}\left( E_2^2(z)-E_4(z)\right)\right)H_{m}(\mathfrak{z},z).
$$
\end{enumerate}
\end{corollary}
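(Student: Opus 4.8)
The plan is to read both parts directly off of Proposition \ref{thm:Htildegen} by specializing the pair $(\ell,r)$ and then exploiting the vanishing of cusp forms in the relevant weights. The engine is the classical fact that the space $S_m$ of weight $m$ cusp forms for $\slz$ is trivial precisely for $m\in\{4,6,8,10,14\}$; every weight appearing in the corollary lies in this list, so each weight-$m$ cusp form produced by Proposition \ref{thm:Htildegen} is identically zero, turning its ``modulo cusp forms'' conclusion into an exact identity.

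For the first formula in Part (1), I would take $\ell=1$ and $r=0$. Unwinding the definition of $\mathcal{F}_{m,\ell}^{(r)}$ gives
$$
\mathcal{F}_{m,1}(\mathfrak{z},z)=\frac{3}{\pi}H_{m,1}(\mathfrak{z},z)+\widehat{E}_2(\mathfrak{z})H_{m-2}(\mathfrak{z},z),
$$
while Proposition \ref{thm:Htildegen} asserts that $\mathfrak{z}\mapsto\mathcal{F}_{m,1}(\mathfrak{z},z)-E_2(z)H_m(\mathfrak{z},z)$ is a weight-$m$ cusp form. Since $m\in\{6,8,10,14\}$ (all $\geq 4+2\ell=6$) forces this cusp form to vanish, I would equate the two expressions and solve for $H_{m,1}$. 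The second formula is identical with $\ell=2$, $r=0$, now requiring $m\geq 8$, hence $m\in\{8,10,14\}$: expanding $\mathcal{F}_{m,2}$ produces $(3/\pi)^2H_{m,2}+(6/\pi)\widehat{E}_2(\mathfrak{z})H_{m-2,1}+\widehat{E}_2^2(\mathfrak{z})H_{m-4}$, which I would set equal to $E_2^2(z)H_m$ and solve for $H_{m,2}$; the constant $(\pi/3)^2\cdot(6/\pi)=2\pi/3$ then yields the stated coefficient.

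For Part (2), I would take $\ell=1$, $r=1$. Expanding the definition gives
$$
\mathcal{F}_{m,1}^{(1)}(\mathfrak{z},z)=\frac{3}{\pi}H_{m,1}^{(1)}(\mathfrak{z},z)+\widehat{E}_2(\mathfrak{z})H_{m-2}^{(1)}(\mathfrak{z},z),
$$
and Proposition \ref{thm:Htildegen}, together with the vanishing of $S_m$ for $m\in\{6,8,10,14\}$, identifies this with $\frac{\partial}{\partial z}\big(E_2(z)H_m(\mathfrak{z},z)\big)$. Applying the product rule and then Ramanujan's differential equation $E_2'(z)=\frac{\pi i}{6}\big(E_2^2(z)-E_4(z)\big)$ to rewrite $\frac{\partial}{\partial z}E_2(z)$, I would solve for $E_2(z)H_m^{(1)}(\mathfrak{z},z)$ and collect terms to reach the claimed identity.

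The computations are routine; the only point needing care is that Proposition \ref{thm:Htildegen} requires $z$ to avoid the elliptic fixed points, whereas Part (2) is stated for all $z\in\H$. I would therefore first establish each identity for $z$ outside the $\slz$-orbit of $i$ and $\rho$, and then extend to all $z\in\H$ by continuity, since for fixed $\mathfrak{z}$ both sides are meromorphic in $z$ and agree on a dense open set. This continuity step, rather than any single computation, is the main (and only mild) obstacle.
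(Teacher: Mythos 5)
Your proposal is correct and follows essentially the same route as the paper: both parts are obtained by specializing Proposition \ref{thm:Htildegen} to $(\ell,r)=(1,0)$, $(2,0)$, and $(1,1)$, using the triviality of the weight-$m$ cusp form spaces for $m\in\{6,8,10,14\}$, rewriting $\frac{\partial}{\partial z}E_2$ via Ramanujan's identity \eqref{eqn:E2deriv}, and extending from non-elliptic $z$ to all $z$ by analytic continuation. The unwinding of $\mathcal{F}_{m,\ell}^{(r)}$ and the resulting constants all check out.
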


In order to conclude Corollary \ref{cor:Htildegen} from 
Proposition \ref{thm:Htildegen},
 we require the principal parts of $\mathcal{F}_{m,\ell}^{(r)}(\mathfrak{z},z)$.  By Lemma \ref{lem:Fmellgrowth}, these are
 determined by  the first $r$ Taylor coefficients of $E_2^{\ell}$ around a point $z\in\H$.  For our purposes, the first two coefficients suffice.
\begin{lemma}\label{lem:E2exp}
In a neighborhood of $z$ we have for every $\ell\in\N$
$$
E_2^{\ell}(\mathfrak{z}) = E_2^{\ell}(z) +\frac{\pi i \ell }{6}\left(E_2^{\ell+1}(z) - E_2^{\ell-1}(z)E_4(z)\right)\left(\mathfrak{z}-z\right) + O\Big(\left(\mathfrak{z}-z\right)^2\Big).
$$
\end{lemma}
\begin{proof}
The claim follows 
from
 (see (30) of \cite{Ra})
\begin{equation}\label{eqn:E2deriv}
E_2'(\mathfrak{z}) = \frac{\pi i}{6}\left(
E_2^2
(\mathfrak{z})-E_4(\mathfrak{z})\right).
\end{equation}
\end{proof}

\begin{proof}[Proof of Corollary \ref{cor:Htildegen}]
\noindent
(1)
We first note that there are no cusp forms of weight $m$.  Hence if $z$ is not an elliptic fixed point, then the first (resp. second) identity holds by 
Proposition \ref{thm:Htildegen}
 with $\ell=1$ (resp. $\ell=2$) and $r=0$.  We 
then analytically continue in $z$ to obtain the claim for all $z$.

\noindent
(2)  We first explicitly plug in the Taylor coefficients of $E_2^{\ell}$ 
around $\mathfrak{z}=z$
 from Lemma \ref{lem:E2exp} to see that the claim holds 
by 
the $\ell=1$ and $r=1$ case of 
Proposition \ref{thm:Htildegen} 
whenever $z$ is not an elliptic fixed point.  The claim then follows by analytic continuation in $z$.
\end{proof}

We are now ready to prove Theorem \ref{thm:mainE2}.
\begin{proof}[Proof of Theorem \ref{thm:mainE2}]
First, assume that $F\in \mathcal{M}_{\mathfrak{z},1}$ with $\mathfrak{z}=i$ or $\mathfrak{z}=\rho$.  By Satz 1 of \cite{Pe1}, $F$ is a constant multiple of $H_{m}(\mathfrak{z},z)$ for some $m\in \N$.  Using 
Proposition \ref{thm:Htildegen}
with $r=0$ and the fact that the space of weight $m$ cusp forms is trivial, we obtain that 
$$
E_2^{\ell}(z)H_{m}(\mathfrak{z},z) = \mathcal{F}_{m,\ell}(\mathfrak{z},z).
$$
Furthermore, for $\mathfrak{z}\in \{i,\rho\}$, Lemma \ref{E2lemma} implies that 
$$
\mathcal{F}_{m,\ell}(\mathfrak{z},z) = \left(\frac{3}{\pi}\right)^{\ell}H_{m,\ell}(\mathfrak{z},z).
$$
We conclude that 
\begin{equation}\label{eqn:E2simple}
E_2^{\ell}(z)H_{m}(\mathfrak{z},z)= \left(\frac{3}{\pi}\right)^{\ell}H_{m,\ell}(\mathfrak{z},z).
\end{equation}
The Fourier expansions of $H_{m,\ell}(\mathfrak{z},z)$ have the desired form by Theorem \ref{genFC}.  This completes the proof of the theorem if the poles are simple.  

We next consider the case that $F\in \mathcal{M}_{\mathfrak{z},2}$ with $\mathfrak{z}\in\{i,\rho\}$.  Then $F$ is a linear combination of $H_{m}(\mathfrak{z},z)$ and $Y_{2-m,-2}(\mathfrak{z},z)$ by Satz 3 of \cite{Pe1}.  By Proposition \ref{prop:Ycoeff}, the functions $Y_{2-m,-2}(\mathfrak{z},z)$ are linear combinations of $H_{m+2,1}(\mathfrak{z},z)$ and $H_{m+2}^{(1)}(\mathfrak{z},z)$, and it hence suffices to prove that $E_2^{\ell}(z)H_{m+2}^{(1)}(\mathfrak{z},z)$ and $E_2^{\ell}(z)H_{m+2}^{(1)}(\mathfrak{z},z)$ both have Fourier expansions which may be written as linear combinations of the series $F_{m,\ell,r}(\mathfrak{z};q)$ given in \eqref{Fklr}.  
However, by \eqref{eqn:E2simple}, we have that
$$
E_2^{\ell}(z) H_{m+2,1}(\mathfrak{z},z)= \frac{\pi}{3}E_2^{\ell+1}(z)H_{m+2}(\mathfrak{z},z),
$$
for which we have already shown the claim (since the space of weight $m+2$ cusp forms is trivial).   Since there are no cusp forms of weight $m+2$, the $r=1$ case of 
Proposition \ref{thm:Htildegen} together with Lemma \ref{lem:E2exp} (noting that the coefficient of $(\mathfrak{z}-z)$ in the Taylor expansion at $\mathfrak{z}=z$ of $E_{2}^{\ell}(\mathfrak{z})$ is $\frac{\partial}{\partial z}(E_2^{\ell}(z))$) implies that
\begin{align*}
E_2^{\ell}(z)H_{m+2}^{(1)}(\mathfrak{z},z) &= \mathcal{F}_{m+2,\ell}^{(1)}(\mathfrak{z},z) - \frac{\partial}{\partial z}\left(E_2^{\ell}(z)\right)H_{m+2}(\mathfrak{z},z)\\
&= \mathcal{F}_{m+2,\ell}^{(1)}(\mathfrak{z},z) - E_2^{\ell-1}(z)\left(\frac{\pi i \ell}{6} \left(E_2^2(z) - E_4(z)\right)\right)H_{m+2}(\mathfrak{z},z).
\end{align*}
However, Lemma \ref{E2lemma} (1) implies that 
$$
 \mathcal{F}_{m+2,\ell}^{(1)}(\mathfrak{z},z) =\left(\frac{3}{\pi}\right)^{\ell} H_{m+2,\ell}^{(1)}(\mathfrak{z},z).
$$
We conclude that 
$$
E_2^{\ell}(z)H_{m+2}^{(1)}(\mathfrak{z},z)= \left(\frac{3}{\pi}\right)^{\ell}H_{m+2,\ell}^{(1)}(\mathfrak{z},z) - E_2^{\ell-1}(z)\left(\frac{\pi i \ell}{6} \left(E_2^2(z) - E_4(z)\right)\right)H_{m+2}(\mathfrak{z},z).
$$

By Theorem \ref{genFC}, $H_{m+2,\ell}^{(1)}(\mathfrak{z},z)$ have Fourier expansions of the desired type.  Since the space of weight $m+2$ cusp forms is trivial, the space of weight $m-2$ cusp forms is also trivial, and we conclude that 
 $H_{m+2}(\mathfrak{z},z)\in\mathcal{M}_{-m,\mathfrak{z},1}$ and $E_4(z)H_{m+2}(\mathfrak{z},z)\in \mathcal{M}_{4-m,\mathfrak{z},1}$.  Using Theorem \ref{thm:mainE2} in the case when the meromorphic modular forms have simple poles (shown above) implies that powers of $E_2$ times  these functions have Fourier expansions of the desired shape, finishing the proof.
\end{proof}

 \section{Examples and the proofs of Corollaries \ref{cor:explicit} and \ref{cor:E2explicit}}\label{sec:examples}
In order to prove Corollary \ref{cor:explicit} and Corollary \ref{cor:E2explicit}, we must explicitly write a number of meromorphic modular forms in terms of the functions $H_{m}(\mathfrak{z},z)$ and $Y_{2-m,\nu}(\mathfrak{z},z)$.
\subsection{Simple poles}
  We first write the functions of interest in terms of meromorphic Poincar\'e series.
 \begin{lemma}\label{residue simple}
 We have
 \begin{align*}
 \frac1{E_4(z)} &=\frac1{4\pi iE_6(\rho)} H_6(\rho, z),\\
  \frac1{E_6(z)} &=\frac1{4\pi iE_4^2(i)} H_8(i, z),\\
 \frac{E_4(z)}{E_6(z)} &=\frac1{4\pi iE_4(i)} H_4(i, z).
 \end{align*}
 \end{lemma}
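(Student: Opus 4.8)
The plan is to prove each of the three identities by the standard ``match the principal part'' argument for meromorphic modular forms of negative weight. First I would record that each left-hand side is a meromorphic modular form of negative weight whose only pole modulo $\SL_2(\Z)$ sits at a single elliptic fixed point: by the valence formula $E_4$ has a simple zero only at $z=\rho$ and $E_6$ a simple zero only at $z=i$, so $1/E_4$ (weight $-4$) has a simple pole at $\rho$, while $1/E_6$ (weight $-6$) and $E_4/E_6$ (weight $-2$) each have a simple pole at $i$. On the right-hand side, $z\mapsto H_m(\mathfrak{z},z)$ is a weight $2-m$ meromorphic modular form exactly when every weight $m$ cusp form vanishes at $\mathfrak{z}$ (Satz~1 of \cite{Pe1}); this is automatic here because the spaces of weight $4$, $6$, and $8$ cusp forms are all trivial. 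Thus $H_6(\rho,z)$, $H_8(i,z)$, and $H_4(i,z)$ have weights $-4$, $-6$, and $-2$, matching the respective left-hand sides, and by the residue formula \eqref{eqn:zRes} each has at most a simple pole, located precisely on the $\SL_2(\Z)$-orbit of $\mathfrak{z}$. So in every case both sides are weight-matched meromorphic modular forms with poles only on the same orbit.

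The crux is to check that the two sides have equal residue at the distinguished point, so that their difference is holomorphic on all of $\H$. Taking $M=I$ (so $(c,d)=(0,1)$) in \eqref{eqn:zRes}, the residue of $H_m(\mathfrak{z},z)$ at $z=\mathfrak{z}$ equals $\widetilde{\varepsilon}(\mathfrak{z})=-2\omega_{\mathfrak{z}}$, which is nonzero precisely because $m/2\equiv 0\pmod{\omega_{\mathfrak{z}}}$ holds in each case ($m=6,\omega_\rho=3$; $m=8,\omega_i=2$; $m=4,\omega_i=2$). For the left-hand sides I would compute the residues at the simple poles as $1/E_4'(\rho)$, $1/E_6'(i)$, and $E_4(i)/E_6'(i)$, and evaluate them using the companion identities to \eqref{eqn:E2deriv}, namely $E_4'=\frac{2\pi i}{3}(E_2E_4-E_6)$ and $E_6'=\pi i(E_2E_6-E_4^2)$, together with $E_4(\rho)=E_6(i)=0$. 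This gives residues $-3/(2\pi i E_6(\rho))$, $-1/(\pi i E_4^2(i))$, and $-1/(\pi i E_4(i))$. Matching each against $\widetilde{\varepsilon}(\mathfrak{z})$ times the proposed constant pins down exactly the scalars $\tfrac{1}{4\pi i E_6(\rho)}$, $\tfrac{1}{4\pi i E_4^2(i)}$, and $\tfrac{1}{4\pi i E_4(i)}$ in the statement.

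With the residues matched, in each identity the difference of the two sides is a modular form of negative weight that is holomorphic throughout $\H$; since both sides tend to a finite limit as $z\to i\infty$ (the constant term of the Fourier expansion in Lemma \ref{firstFourier}), the difference is holomorphic at the cusp as well. A holomorphic modular form of negative weight vanishes identically, which yields all three equalities. The three cases are entirely parallel, differing only in the choice of $\mathfrak{z}$ and the arithmetic of $\widetilde{\varepsilon}$.

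The step I expect to be most delicate is the residue bookkeeping at the elliptic fixed points. Because $\rho$ and $i$ have nontrivial stabilizers, several cosets $M\in\Gamma_\infty\backslash\SL_2(\Z)$ send $\mathfrak{z}$ to itself, and one must verify that \eqref{eqn:zRes} already encodes the \emph{total} residue at $z=\mathfrak{z}$ (with the stabilizer contribution absorbed into the factor $\omega_{\mathfrak{z}}$ in $\widetilde{\varepsilon}$), rather than a single per-coset term. Coupled with this, the exact constants in the lemma depend sensitively on using the correctly normalized Ramanujan derivative formulas; an off-by-a-factor there would produce a spurious multiple and break the precise agreement. Once these normalizations are fixed, the holomorphic-form-of-negative-weight argument closes the proof cleanly.
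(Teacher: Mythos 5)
Your proposal is correct and follows essentially the same route as the paper: both sides are weight-matched meromorphic modular forms with a single simple pole on the same $\SL_2(\Z)$-orbit, and the constants are pinned down by comparing the residue $\widetilde{\varepsilon}(\mathfrak{z})=-2\omega_{\mathfrak{z}}$ from \eqref{eqn:zRes} against $1/E_4'(\rho)$, $1/E_6'(i)$, $E_4(i)/E_6'(i)$ computed via Ramanujan's derivative formulas \eqref{E4D} and \eqref{E6D}. Your residue values and normalizations all check out against the stated constants.
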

\begin{remark}
Using the Chowla-Selberg formula (cf. the corollary to Proposition 27 of \cite{123}), the constant $E_4(i)$ may be rewritten as
$$
E_4(i) = \frac{3 \Gamma\left(\frac{1}{4}\right)^8}{(2\pi)^6}.
$$
\end{remark}
\begin{proof}
Noting that the spaces of weight $6$, $8$, and $4$ cusp forms, 
respectively, are all trivial, Satz 1 of \cite{Pe1} implies that the right-hand sides of each of the above identities are meromorphic modular forms.  Since both sides of the identities are meromorphic modular forms of the same (negative) weight with poles at the same point it suffices to prove that their principal parts agree.
 Firstly, by \eqref{eqn:zRes}, we have
 \[
 \Res_{z=\rho} H_6(\rho, z)=-6.
 \]
 Moreover
 \[
 \Res_{z=\rho}\frac1{E_4(z)}=\frac1{E_4'(\rho)}.
 \]
 Now the first identity follows from 
(see (30) of \cite{Ra})
 \begin{equation}\label{E4D}
 \frac1{2\pi i} E_4' (z)=\frac13\left(E_2(z) E_4(z)-E_6(z)\right).
 \end{equation}
 The remaining identities can be concluded similarly, using that 
(see (30) of \cite{Ra})
 \begin{equation}\label{E6D}
  \frac1{2\pi i} E_6' (z)=\frac12\left(E_2(z) E_6(z)-E_4^2(z)\right).
 \end{equation}
 \end{proof}
 We next consider multiplication by powers of $E_2$. 
 
 \begin{lemma}\label{residuesimpleE2}
 We have
 \begin{align*}
 \frac{E_2(z)}{E_6(z)} &=\frac{3}{4\pi^2 iE_4^2(i)} H_{8,1}(i, z),\\
 \frac{E_2(z)}{E_4(z)} &=\frac{3}{4\pi^2 iE_6(\rho)} H_{6,1}(\rho, z),\\
 \frac{E_2^2(z)}{E_6(z)} &=\frac{9}{4\pi^3 i E_4^2(i)} H_{8,2}(i, z).
 \end{align*}
 \end{lemma}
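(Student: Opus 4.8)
The plan is to obtain all three identities by combining Lemma \ref{residue simple} with the key relation \eqref{eqn:E2simple} from the proof of Theorem \ref{thm:mainE2}, namely that $E_2^{\ell}(z)H_m(\mathfrak{z},z)=(3/\pi)^{\ell}H_{m,\ell}(\mathfrak{z},z)$ holds for $\mathfrak{z}\in\{i,\rho\}$ whenever the space of weight $m$ cusp forms is trivial. No new ideas are needed: each statement is produced by multiplying a simple-pole expansion by a suitable power of $E_2$ and then applying \eqref{eqn:E2simple} to absorb that power into the second index of the Poincar\'e series.

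For the first identity I would start from $1/E_6(z)=\frac{1}{4\pi i E_4^2(i)}H_8(i,z)$, multiply by $E_2(z)$, and apply \eqref{eqn:E2simple} with $\mathfrak{z}=i$, $m=8$, and $\ell=1$; the factor $3/\pi$ then combines with the prefactor to give $\frac{3}{4\pi^2 i E_4^2(i)}H_{8,1}(i,z)$. The identity for $E_2/E_4$ is handled identically, starting instead from $1/E_4(z)=\frac{1}{4\pi i E_6(\rho)}H_6(\rho,z)$ and using \eqref{eqn:E2simple} at $\mathfrak{z}=\rho$ with $m=6$, $\ell=1$. For the third identity I would again begin with the expansion of $1/E_6$, but multiply by $E_2^2(z)$ and invoke \eqref{eqn:E2simple} with $\ell=2$, producing the factor $(3/\pi)^2=9/\pi^2$ and hence the constant $\frac{9}{4\pi^3 i E_4^2(i)}$ in front of $H_{8,2}(i,z)$.

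Since every step is a direct substitution, I do not expect any real obstacle. The only points deserving care are verifying that the relevant weights carry no cusp forms (true for $m=6$ and $m=8$, so that \eqref{eqn:E2simple} is available) and that the convergence condition $m\geq 4+2\ell$ for $H_{m,\ell}$ holds, including in the borderline cases $(m,\ell)=(6,1)$ and $(8,2)$ where it is an equality.
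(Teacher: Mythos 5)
Your proposal is correct and follows essentially the same route as the paper: the paper also starts from Lemma \ref{residue simple}, multiplies by $E_2$ or $E_2^2$, and absorbs the power of $E_2$ into the second index via Corollary \ref{cor:Htildegen} (1) together with the vanishing of $\widehat{E}_2$ at $i$ and $\rho$ (Lemma \ref{E2lemma} (1)), which is exactly the content of the relation \eqref{eqn:E2simple} you invoke. Your extra checks on the triviality of the relevant cusp form spaces and the borderline convergence cases $(m,\ell)=(6,1)$ and $(8,2)$ are the right ones and are implicitly covered by the hypotheses $m\in\{6,8,10,14\}$ in that corollary.
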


 \begin{proof}
By Lemma \ref{residue simple} we have
\[
 \frac{E_2(z)}{E_6(z)}=\frac1{4\pi iE_4^2(i)} H_8(i, z) E_2(z). 
\]
Now 
Corollary \ref{cor:Htildegen} (1)
 concludes the first identity. The second identity is proven in the same way.  The third claim follows by  
Corollary \ref{cor:Htildegen} (1)
 and Lemma \ref{E2lemma} (1).

\end{proof}

 \subsection{Second-order poles}
We next write certain meromorphic modular forms with second-order poles in terms of 
the functions $Y_{2-m,-2}$.
\begin{lemma}\label{secondpolelemma}
 We have  
\begin{align*}
\frac{1}{E_4^2(z)} & = \frac{i}{648 \sqrt{3} \pi^2 E_6^2(\rho)}Y_{-8,-2}(\rho,z), \\
\frac{1}{E_6^2(z)} & = \frac{i}{2^{17} \pi^2 E_4^4(i)}Y_{-12,-2}(i,z),\\
\frac{E_4(z)}{E_6^2(z)} & = \frac{i}{2^{13} \pi^2  E_4^3(i)}Y_{-8,-2}(i,z), \\
\frac{E_4^2(z)}{E_6^2(z)} & = \frac{i}{2^{9} \pi^2 E_4^2(i)}Y_{-4,-2}(i,z),\\
\frac{E_6(z)}{E_4^2(z)} & = -\frac{i}{24 \sqrt{3} \pi^2 E_6 (\rho)}Y_{-2,-2}(\rho,z).
\end{align*}
\end{lemma}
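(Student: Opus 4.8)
The plan is to deduce each identity from a one-dimensionality argument, reserving the computation of constants to a single comparison of leading Laurent coefficients. Write the common (negative) weight as $2-m$, so that the pairs $(m,\mathfrak{z})$ are $(10,\rho)$, $(14,i)$, $(10,i)$, $(6,i)$, and $(4,\rho)$ for the five identities in turn.

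First I would check that each space $\mathcal{M}_{2-m,\mathfrak{z},2}$ is one-dimensional. Given $F\in\mathcal{M}_{2-m,\mathfrak{z},2}$, I multiply by $E_4^2$ when $\mathfrak{z}=\rho$ and by $E_6^2$ when $\mathfrak{z}=i$. Since $E_4$ has a simple zero at $\rho$ and $E_6$ a simple zero at $i$, the double zero of $E_4^2$ (resp.\ $E_6^2$) cancels the order $\le 2$ pole of $F$, so the product is holomorphic on $\H$; as $F$ has no pole at the cusp, the product is a genuine holomorphic modular form, of weight $10-m$ (resp.\ $14-m$). Across the five cases these weights are $0,0,4,8,6$, and since $\dim M_0=\dim M_4=\dim M_6=\dim M_8=1$ (spanned by $1,E_4,E_6,E_4^2$), the product must be a scalar multiple of $1$, $E_4$, $E_6$, or $E_4^2$. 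Hence $\mathcal{M}_{2-m,\mathfrak{z},2}$ is spanned by the left-hand side of the relevant identity.

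Next I would note that, by Satz 3 of \cite{Pe1} as applied in the proof of Theorem \ref{thm:mainmero}, the function $z\mapsto Y_{2-m,-2}(\mathfrak{z},z)$ lies in $\mathcal{M}_{2-m,\mathfrak{z},2}$, and it is nonzero since its pole at $z=\mathfrak{z}$ has order exactly two. The previous paragraph therefore already yields each identity up to one multiplicative constant, which I would pin down by comparing the coefficient of $(z-\mathfrak{z})^{-2}$ on both sides. On the left, Taylor expansion gives leading coefficient $1/E_4'(\rho)^2$ for $1/E_4^2$ (and analogously for the other forms), and \eqref{E4D} together with $E_4(\rho)=0$ evaluates $E_4'(\rho)=-\tfrac{2\pi i}{3}E_6(\rho)$, while \eqref{E6D} gives $E_6'(i)=-\pi i E_4^2(i)$. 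On the right, Proposition \ref{prop:Ycoeff} writes $Y_{2-m,-2}$ in terms of $H_{m+2,1}$ and $H_{m+2}^{(1)}$; only $H_{m+2}^{(1)}$ carries a double pole, and since $\Res_{z=\mathfrak{z}}H_{m+2}(\mathfrak{z},z)=-2\omega_{\mathfrak{z}}$ by \eqref{eqn:zRes} (the congruence $\tfrac{m+2}{2}\equiv 0\pmod{\omega_{\mathfrak{z}}}$ holding in each case), the coefficient of $(z-\mathfrak{z})^{-2}$ in $Y_{2-m,-2}$ equals $-2\omega_{\mathfrak{z}}(2i\mathfrak{z}_2)^{m+1}$. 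Equating the two leading coefficients determines the constant in each identity; for instance, the first identity reproduces exactly $i/(648\sqrt{3}\,\pi^2E_6^2(\rho))$.

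The main obstacle is this final coefficient comparison: one must extract the leading Laurent coefficient of $Y_{2-m,-2}$ without error and then track the many constants — the powers of $2i\mathfrak{z}_2$, the factors $\omega_{\mathfrak{z}}$, and the evaluations $E_6(\rho)$ and $E_4(i)$ — uniformly through all five cases. By contrast, the structural part, namely the dimension count placing both sides in a one-dimensional space, is routine.
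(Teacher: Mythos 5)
Your proposal is correct, and the arithmetic checks out (for the first identity your two leading coefficients, $1/E_4'(\rho)^2=-9/(4\pi^2E_6^2(\rho))$ and $-2\omega_\rho(2i\mathfrak{z}_2)^{11}=2\cdot 3^6\sqrt{3}\,i$, are exactly the two limits computed in the paper, and their ratio gives the stated constant). The route differs from the paper's in two places. First, the paper does not prove one-dimensionality of $\mathcal{M}_{2-m,\mathfrak{z},2}$; it argues that both sides lie in that space and then matches principal parts, using the congruence condition $n\equiv\frac{m}{2}-1\pmod{\omega_{\mathfrak{z}}}$ in Petersson's elliptic expansion \eqref{eqn:ellipticexp} to see that the principal part consists of the single term $b_{-2}(z-\overline{\mathfrak{z}})^{m-2}X_{\mathfrak{z}}^{-2}(z)$, so that one coefficient comparison suffices. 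Your pole-clearing argument (multiplying by $E_4^2$ or $E_6^2$ and counting dimensions of holomorphic spaces) reaches the same reduction more elementarily and makes it transparent why the $(z-\mathfrak{z})^{-1}$ coefficients need no separate check; the paper's elliptic-expansion argument, on the other hand, is the one that scales to cases where the ambient space is not one-dimensional (compare the proof of Theorem \ref{thm:PoleArbitrary}, where both $\lambda_{-2}$ and $\lambda_{-1}$ must be matched). Second, for the leading Laurent coefficient of $Y_{2-m,-2}$ the paper quotes Petersson's formula (4a.9) directly, whereas you derive it from Proposition \ref{prop:Ycoeff} together with \eqref{eqn:zRes}, observing that only $H_{m+2}^{(1)}$ contributes a double pole in $z$; this is a legitimate internal derivation that happens to reprove the special case of (4a.9) that the paper imports. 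Both approaches rely on Satz 3 of \cite{Pe1} (plus triviality of the relevant cusp form spaces) to place $Y_{2-m,-2}(\mathfrak{z},\cdot)$ in $\mathcal{M}_{2-m,\mathfrak{z},2}$, so that dependence is shared.
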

 \begin{proof}
  We only prove the first identity, since the argument for the other ones is entirely analogous.  The left-hand side of the identity is clearly a meromorphic modular form of weight $-8$ which has a second-order pole at $z= \rho$.  By Satz 3 of \cite{Pe1} and the fact that the space of weight $10$ cusp forms is trivial, the right-hand side is also a meromorphic modular form of weight $-8$ with at most a second-order pole at $z=\rho$.  To finish the proof, we have to show that the principal parts around $\rho$ agree. For this, we use that (see (4a.9) of \cite{Pe1})
$$
Y_{2-m,\nu}(\rho,z) =-6\sqrt{3}i\, \delta\!\left(\frac{-\nu-1 + \frac{m}{2}}{3}\right) \left(z-\overline{\rho}\right)^{m-2} X_{\rho}^{\nu}(z) + \Phi_{\nu}(z),
$$ 
where $\Phi_{\nu}$ is regular in a neighborhood of $\rho$ and $\delta(x)=1$ if $x\in \Z$ and $\delta(x)=0$ otherwise.  
In particular, this yields that 
\begin{equation}\label{eqn:Y10exp}
Y_{-8,-2}(\rho, z) = - 6\sqrt{3} i\frac{\left(z-\overline{\rho}\right)^{10}}{(z-\rho)^2} + O(1).
\end{equation}
Furthermore, by (2a.16) of \cite{Pe1}, every weight $2-m$ meromorphic modular form has an expansion in a punctured annulus around $z_0$ of the type 
\begin{equation}\label{eqn:ellipticexp}
f(z)=\left(z-\overline{z_0}\right)^{m-2}\sum_{\substack{n\gg -\infty\\ n\equiv \frac{m}{2}-1\pmod{\omega_{z_0}}}} b_n\left(z_0,f\right) \left(\frac{z-z_0}{z-\overline{z_0}}\right)^n.
\end{equation}
  Since $1/E_4^2$ has at most a second-order pole at $\rho$, the congruence conditions in the expansion imply that
$$
\frac{1}{E_4^2(z)} = b_{-2}\left(\rho,\frac{1}{E_4^2}\right) \frac{\left(z-\overline{\rho}\right)^{10}}{\left(z-\rho\right)^{2}} + O(1).
$$
In order to compare $b_{-2}(\rho,1/E_4^2)$ with the constant in \eqref{eqn:Y10exp}, we evaluate the limit
$$
\lim_{z\to \rho} \frac{\left(z-\rho\right)^2}{E_4^2(z)} =  -\frac{9}{4 \pi^2 E_6^2(\rho)}.
$$
However, \eqref{eqn:Y10exp} implies that
$$
\lim_{z\to \rho} \left(z-\rho\right)^2 Y_{-8,-2}(\rho, z) = 2\cdot 3^{6} \sqrt{3}i.
$$
This concludes the proof.
 \end{proof}

We now use Lemma \ref{secondpolelemma} and the results from Section \ref{sec:quasi} to obtain $E_2\cdot 
E_4^2/E_6^2
$ as an explicit linear combination of the functions $H_{m,\ell}^{(r)}(i,z)$.
\begin{lemma}\label{lem:E2E4^2/E6^2}
 We have 
$$
\frac{E_2(z)E_4^2(z)}{E_6^2(z)}=\frac{1}{4 \pi^2 E_4^2(i)}\left(-\frac{3}{\pi}H_{8,1}^{(1)}(i,z) - \frac{\pi i E_4(i)}{6}H_4(i,z) - \frac{5 i}{2} \frac{3}{\pi} H_{8,2}(i,z)\right).
$$
\end{lemma}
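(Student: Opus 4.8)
The plan is to begin from the second-order-pole identity already recorded in Lemma~\ref{secondpolelemma}, namely $\frac{E_4^2(z)}{E_6^2(z)}=\frac{i}{2^9\pi^2 E_4^2(i)}Y_{-4,-2}(i,z)$, and to unfold the right-hand side via Proposition~\ref{prop:Ycoeff}. Taking $m=6$ and $\mathfrak{z}=i$ there (so that $\mathfrak{z}_2=1$ and $(2i\mathfrak{z}_2)^{m+1}=(2i)^7$), one rewrites $Y_{-4,-2}(i,z)$ as an explicit linear combination of $H_{8,1}(i,z)$ and $H_8^{(1)}(i,z)$. Multiplying by $E_2(z)$ then reduces the entire lemma to evaluating the two products $E_2(z)H_{8,1}(i,z)$ and $E_2(z)H_8^{(1)}(i,z)$ and bookkeeping the constants.

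For the first product I would use the simple-pole relation \eqref{eqn:E2simple} --- equivalently Corollary~\ref{cor:Htildegen}~(1) combined with the vanishing $\widehat{E}_2(i)=0$ of Lemma~\ref{E2lemma}~(1) --- which gives $H_{8,\ell}(i,z)=(\pi/3)^\ell E_2^\ell(z)H_8(i,z)$ for $\ell\in\{1,2\}$. Applying this twice yields $E_2(z)H_{8,1}(i,z)=\frac{\pi}{3}E_2^2(z)H_8(i,z)=\frac{3}{\pi}H_{8,2}(i,z)$. For the second product I would apply Corollary~\ref{cor:Htildegen}~(2) with $m=8$, $\mathfrak{z}=i$; since $\widehat{E}_2(i)=0$ the middle term vanishes, leaving $E_2(z)H_8^{(1)}(i,z)=\frac{3}{\pi}H_{8,1}^{(1)}(i,z)-\frac{\pi i}{6}\bigl(E_2^2(z)-E_4(z)\bigr)H_8(i,z)$.

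It remains to dispose of the stray factor $\bigl(E_2^2(z)-E_4(z)\bigr)H_8(i,z)$. The term $E_2^2(z)H_8(i,z)$ is again $\frac{9}{\pi^2}H_{8,2}(i,z)$ by \eqref{eqn:E2simple} with $\ell=2$. For $E_4(z)H_8(i,z)$ I would combine the second and third identities of Lemma~\ref{residue simple}: from $H_8(i,z)=4\pi i E_4^2(i)/E_6(z)$ and $H_4(i,z)=4\pi i E_4(i)E_4(z)/E_6(z)$ one reads off $E_4(z)H_8(i,z)=E_4(i)H_4(i,z)$, which turns the offending weight $-2$ meromorphic cusp form back into the Petersson series $H_4$. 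Substituting all of this and grouping the $H_{8,1}^{(1)}$, $H_{8,2}$, and $H_4$ contributions, then pulling out the overall factor $\frac{1}{4\pi^2 E_4^2(i)}$, produces the stated coefficients $-\frac{3}{\pi}$, $-\frac{5i}{2}\cdot\frac{3}{\pi}$, and $-\frac{\pi i E_4(i)}{6}$.

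The argument is linear apart from one genuinely substantive input, the reduction $E_4(z)H_8(i,z)=E_4(i)H_4(i,z)$; everything else is careful tracking of the constants $(2i)^7$, $2^9$, and the various powers of $\pi$ and $i$ coming from Proposition~\ref{prop:Ycoeff} and Corollary~\ref{cor:Htildegen}. I expect the main place to err is the coefficient of $H_{8,2}$: two separate $H_{8,2}$ contributions arise --- one from $E_2(z)H_{8,1}(i,z)$ and one from the $E_2^2(z)H_8(i,z)$ piece hidden inside the $H_8^{(1)}$ term --- and they must combine, after the powers of $i$ cancel, to the single factor $-\frac{5i}{2}\cdot\frac{3}{\pi}$.
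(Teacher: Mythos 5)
Your proposal is correct and follows essentially the same route as the paper's own proof: start from the $Y_{-4,-2}(i,z)$ identity in Lemma \ref{secondpolelemma}, unfold via Proposition \ref{prop:Ycoeff}, handle the two products with $E_2$ using \eqref{eqn:E2simple} and Corollary \ref{cor:Htildegen}, and convert $E_4(z)H_8(i,z)$ into $E_4(i)H_4(i,z)$ by applying Lemma \ref{residue simple} twice. The constants you describe (including the two $H_{8,2}$ contributions combining to $-\tfrac{5i}{2}\cdot\tfrac{3}{\pi}$) check out.
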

\begin{proof}
Lemma \ref{secondpolelemma} gives that 
$$
\frac{E_2(z)E_4^2(z)}{E_6^2(z)}= \frac{i}{2^9 \pi^2 E_4^2(i)}Y_{-4,-2}(i,z)E_2(z).
$$
By 
Proposition \ref{prop:Ycoeff}, \eqref{eqn:E2simple},
Corollary \ref{cor:Htildegen} (1), and Corollary \ref{cor:Htildegen} (2),
we have that
$$
Y_{-4,-2}(i,z)E_2(z)= -i 2^7 \left(-\frac{3}{\pi}H_{8,1}^{(1)}(i,z) - \frac{\pi i}{6}E_4(z)H_8(i,z) - \frac{5i}{2}\cdot\frac{3}{\pi} H_{8,2}(i,z)\right).
$$
One then concludes the proof by applying Lemma \ref{residue simple} twice to obtain 
$$
E_4(z)H_8(i,z) = E_4(i)H_4(i,z).
$$
\end{proof}
\subsection{Third-order poles}
Similarly to the proof of Lemma \ref{secondpolelemma}, we relate meromorphic modular forms with third-order poles to 
the functions $Y_{2-m,-3}$.

\begin{lemma}\label{thirdpolelemma}
 We have
\begin{align*}
 \frac{1}{E_4^3(z)}&= \frac{i}{16 \cdot 3^6 \pi^3 E_6^3(\rho)}Y_{-12,-3}(\rho,z),\\
 \frac{E_6(z)}{E_4^3(z)}&= -\frac{i}{16 \cdot 3^3 \pi^3 E_6^2(\rho)}Y_{-6,-3}(\rho,z).
\end{align*}
\end{lemma}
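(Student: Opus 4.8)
The plan is to follow the proof of Lemma \ref{secondpolelemma} essentially verbatim, replacing the second-order data by third-order data; I describe the first identity in detail, since the second is entirely analogous. First I would observe that $1/E_4^3$ is a meromorphic modular form of weight $-12$ whose only pole modulo $\SL_2(\Z)$ is a pole of order exactly three at $z=\rho$ (the order is three because $E_4$ has a simple zero there). By Satz 3 of \cite{Pe1} together with the triviality of the space of weight $14$ cusp forms, the function $Y_{-12,-3}(\rho,z)$ is also a weight $-12$ meromorphic modular form with at most a third-order pole at $\rho$. As both sides then share weight and pole location, it suffices to match their principal parts around $z=\rho$.

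Next I would extract the leading behavior of $Y_{-12,-3}(\rho,z)$ from (4a.9) of \cite{Pe1}. Here $m=14$ and $\nu=-3$, and one checks $\frac{-\nu-1+m/2}{3}=3\in\Z$, so the delta-factor equals $1$ and, recalling $X_\rho(z)=(z-\rho)/(z-\overline{\rho})$,
\[
Y_{-12,-3}(\rho,z)=-6\sqrt{3}\,i\,\frac{(z-\overline{\rho})^{15}}{(z-\rho)^{3}}+O(1).
\]
The crucial point, exactly as in Lemma \ref{secondpolelemma}, is that the elliptic expansion \eqref{eqn:ellipticexp} carries the congruence $n\equiv \frac{m}{2}-1\equiv 0\pmod{\omega_\rho}$ with $\omega_\rho=3$, so that among the possible principal-part exponents $n\in\{-3,-2,-1\}$ only $n=-3$ survives. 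Hence the principal parts of both sides each consist of a \emph{single} term, and matching reduces to comparing the one constant, i.e. to comparing the two limits $\lim_{z\to\rho}(z-\rho)^3/E_4^3(z)$ and $\lim_{z\to\rho}(z-\rho)^3 Y_{-12,-3}(\rho,z)$.

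Finally I would compute these limits. Using \eqref{E4D} and $E_4(\rho)=0$ gives $E_4'(\rho)=-\tfrac{2\pi i}{3}E_6(\rho)$, whence $\lim_{z\to\rho}(z-\rho)^3/E_4^3(z)=1/E_4'(\rho)^{3}=\tfrac{27}{8\pi^3 i E_6^3(\rho)}$; on the other side the displayed formula yields $\lim_{z\to\rho}(z-\rho)^3 Y_{-12,-3}(\rho,z)=-6\sqrt{3}\,i\,(\rho-\overline{\rho})^{15}$, which is evaluated from $\rho-\overline{\rho}=i\sqrt{3}$. Comparing the two quantities produces the stated constant $\tfrac{i}{16\cdot 3^6\pi^3 E_6^3(\rho)}$. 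The second identity is handled identically with $m=8$ and $\nu=-3$ (so the delta-factor again equals $1$ and the same congruence forces a single principal-part term), using $E_6(\rho)\neq 0$ and the limit $\lim_{z\to\rho}(z-\rho)^3 E_6(z)/E_4^3(z)=E_6(\rho)/E_4'(\rho)^{3}$. The only place requiring genuine care is the bookkeeping of the triple-zero limit and of the powers of $\rho-\overline{\rho}$; conceptually the argument introduces nothing beyond the second-order case already treated.
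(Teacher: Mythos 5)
Your proposal is correct and is exactly what the paper intends: the paper omits the proof of this lemma entirely, saying only that it proceeds ``similarly to the proof of Lemma \ref{secondpolelemma}'', and your argument is precisely that analogue (Satz 3 plus triviality of the relevant cusp form space, the single-term principal part forced by the congruence in \eqref{eqn:ellipticexp}, and comparison of the leading Laurent coefficients via (4a.9) of \cite{Pe1}). The constants you compute, $1/E_4'(\rho)^3=\frac{27}{8\pi^3 i E_6^3(\rho)}$ against $-6\sqrt{3}\,i\,(i\sqrt{3})^{15}=-2\cdot 3^9$ and the analogous pair for the second identity, do reproduce the stated coefficients.
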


\subsection{Proof of Corollaries \ref{cor:explicit} and \ref{cor:E2explicit}}
We are now ready to prove Corollary \ref{cor:explicit}.

\begin{proof}[Proof of Corollary \ref{cor:explicit}]
We obtain (1) by plugging 
Proposition \ref{thm:Order2poles}
 into Lemma \ref{secondpolelemma}. Part (2) follows by 
Proposition \ref{thm:Order3poles}
 and the identities from Lemma \ref{thirdpolelemma}.
In particular, the constants $k_f$, $a_f$, and $c_f$ in Corollary \ref{cor:explicit} (1) are:
\begin{equation}\label{eqn:explicitconsts1}
\begin{array}{|l|c|c|c|}
\hline
f& k_f&a_f&c_f\\
\hline
\hline
\frac{1}{E_4^2} & 12& \frac{15\sqrt{3}^{\vphantom{\frac{1}{2}}}}{\pi E_6^2(\rho)} & \frac{9}{E_6^2(\rho)}\\[1.2ex]
\frac{E_6}{E_4^2} & 12 & \frac{6\sqrt{3}}{\pi E_6(\rho)} & \frac{9}{E_6(\rho)}\\[1.2ex]
\frac{1}{E_6^2} & 16 & \frac{14}{\pi E_4^4(i)} & \frac{4}{E_4^4(i)}\\[1.2ex]
\frac{E_4}{E_6^2} & 12 & \frac{10}{\pi E_4^3(i)} & \frac{4}{E_4^3(i)}\\[1.2ex]
\hline
\end{array}
\end{equation}
The constants $k_f$, $a_f$, $c_f$, and $d_f$ in Corollary \ref{cor:explicit} (2) are:
\begin{equation}\label{eqn:explicitconsts2}
\begin{array}{|l|c|c|c|c|}
\hline
f& k_f&a_f&c_f&d_f\\
\hline
\hline
\frac{1}{E_4^3} & 18& \frac{945}{4\pi^2 E_6^3(\rho)} & \frac{135\sqrt{3}^{\vphantom{\frac{1}{2}}}}{2\pi E_6^3(\rho)}& \frac{27}{2 E_6^3(\rho)}\\[1.2ex]
\frac{E_6}{E_4^3} & 12 & \frac{81}{\pi^2 E_6^2(\rho)} &\frac{81\sqrt{3}}{2\pi E_6^2(\rho)} & \frac{27}{2E_6^2(\rho)}\\[1.2ex]
\hline
\end{array}
\end{equation}
\end{proof}

\begin{proof}[Proof of Corollary \ref{cor:E2explicit}]
By Lemma \ref{lem:E2E4^2/E6^2} and Theorem \ref{genFC}, we have 
\begin{equation}\label{eqn:explicitconsts3}
k=8,\quad a=\frac{12}{\pi E_4^2(i)},\quad c=-\frac{60}{E_4^2(i)},\text{ and } d=\frac{1}{3E_4(i)}.\end{equation}
\end{proof}

\section{An infinite family of meromorphic modular forms and the proof of Theorem \ref{thm:PoleArbitrary}}\label{sec:InfFamily}

In this section, we compute the Fourier expansions of $F_{\tau_0}$, defined in \eqref{eqn:Ftaudef}.  
\begin{proof}[Proof of Theorem \ref{thm:PoleArbitrary}]
We begin by rewriting $F_{\tau_0}$ as linear combinations of the functions $H_m(\mathfrak{z},z)$ and $Y_{2-m,\nu}(\mathfrak{z},z)$.  It is easy to see that $F_{\tau_0}$ are weight $-8$ meromorphic cusp forms with at most second-order poles at $\tau_0$ and no other poles in $\H$ modulo $\SL_2(\Z)$.  By Satz 3 of \cite{Pe1}, we may hence rewrite each  $F_{\tau_0}$ as a linear combination of $Y_{-8,-2}(\tau_0,z)$ and $H_{10}\left(\tau_0,z\right)$.  In order to determine the explicit linear combination, we must determine the principal parts of $F_{\tau_0}$.  

In order to find the principal parts of $F_{\tau_0}$, we first note that a direct calculation yields the Taylor expansion
$$
\left(j(z)-j\left(\tau_0\right)\right)^2=j'\left(\tau_0\right)^2 \left(z-\tau_0\right)^2 + j'\left(\tau_0\right)j''\left(\tau_0\right)\left(z-\tau_0\right)^3 +O\left(\left(z-\tau_0\right)^4\right).
$$
If $j'\left(\tau_0\right)\neq 0$, then it follows that
$$
\frac{1}{\left(j(z)-j\left(\tau_0\right)\right)^2} = j'\left(\tau_0\right)^{-2} \left(z-\tau_0\right)^{-2} - j'\left(\tau_0\right)^{-3}j''\left(\tau_0\right)\left(z-\tau_0\right)^{-1} + O(1).
$$
However, since 
$$
j'\left(\tau_0\right) =-2\pi i \frac{E_4^2\left(\tau_0\right)E_6\left(\tau_0\right)}{\Delta\left(\tau_0\right)},
$$
we have 
 $j'\left(\tau_0\right)=0$ if and only if $\tau_0=i$ or $\tau_0=\rho$ modulo $\SL_2(\Z)$.

We next expand 
$$
\frac{E_4(z)}{\Delta(z)} = \frac{E_4\left(\tau_0\right)}{\Delta\left(\tau_0\right)} + \frac{\Delta\left(\tau_0\right)E_4'\left(\tau_0\right) - E_4\left(\tau_0\right)\Delta'\left(\tau_0\right)}{\Delta\left(\tau_0\right)^2}\left(z-\tau_0\right) + O\left(\left(z-\tau_0\right)^2\right).
$$
Hence 
\begin{multline*}
F_{\tau_0}(z) = \frac{E_4\left(\tau_0\right)}{\Delta\left(\tau_0\right)j'\left(\tau_0\right)^2}\left(z-\tau_0\right)^{-2}\\
 + \left(-\frac{E_4\left(\tau_0\right)}{\Delta\left(\tau_0\right)}\cdot \frac{j''\left(\tau_0\right)}{j'\left(\tau_0\right)^3}+ \frac{\Delta\left(\tau_0\right)E_4'\left(\tau_0\right)-E_4\left(\tau_0\right)\Delta'\left(\tau_0\right)}{\Delta\left(\tau_0\right)^2j'\left(\tau_0\right)^2}\right)\left(z-\tau_0\right)^{-1} + O(1).
\end{multline*}

We have hence shown that the Laurent series coefficients of the principal parts of $F_{\tau_0}$ around $z=\tau_0$ are given by 
\begin{equation}\label{eqn:deflambda1}
\lambda_{-2}:=\frac{E_4\left(\tau_0\right)}{\Delta\left(\tau_0\right)j'\left(\tau_0\right)^2},
\end{equation}
\begin{equation}\label{eqn:deflambda2}
\lambda_{-1}:=-\frac{E_4\left(\tau_0\right)}{\Delta\left(\tau_0\right)}\cdot \frac{j''\left(\tau_0\right)}{j'\left(\tau_0\right)^3}+ \frac{\Delta\left(\tau_0\right)E_4'\left(\tau_0\right)-E_4\left(\tau_0\right)\Delta'\left(\tau_0\right)}{\Delta\left(\tau_0\right)^2j'\left(\tau_0\right)^2}.
\end{equation}

We now compare these with the Laurent series expansions of $Y_{-8, -2}(\tau_0,z)$ and $H_{10}\left(\tau_0,z\right)$.  
By (4a.9) of \cite{Pe1}, we have (where $v_0:=\im\left(\tau_0\right)$)
\begin{multline*}
Y_{-8,-2}\left(\tau_0,z\right)= -4i v_0\left(z-\overline{\tau_0}\right)^{8}X_{\tau_0}(z)^{-2} + O(1)= -4iv_0\left(z-\overline{\tau_0}\right)^{10}\left(z-\tau_0\right)^{-2}+O(1)\\
=-2\left(2iv_0\right)^{11}\left(z-\tau_0\right)^{-2} -40\left(2iv_0\right)^{10}\left(z-\tau_0\right)^{-1}+O(1).
\end{multline*}
Furthermore \eqref{eqn:zRes} gives that
$$
H_{10}(\tau_0,z) = -2\left(z-\tau_0\right)^{-1} +O(1).
$$
We conclude that
$$
F_{\tau_0}(z) = -\frac{\lambda_{-2}}{4\left(2iv_0\right)^{11}} Y_{-8,-2}\left(\tau_0,z\right) + \left( \frac{\lambda_{-2}}{40\left(2i v_0\right)^{10}}-\frac{\lambda_{-1}}{2}\right) H_{10}\left(\tau_0,z\right).
$$
Plugging Proposition \ref{prop:Ycoeff} and Theorem \ref{genFC} (1) into the above identity then yields the statement of the theorem.
\end{proof}


\begin{thebibliography}{99}
\bibitem{BeBi} B. Berndt and P. Bialek, \begin{it}Five formulas of Ramanujan arising from Eisenstein series\end{it}, Canadian Math. Soc. Conf. Proc. \textbf{15} (1995), 67--86.

\bibitem{BeBiYe} B. Berndt, P. Bialek, and A. Yee, \begin{it}Formulas of Ramanujan for the power series coefficients of certain quotients of Eisenstein series\end{it}, Int. Math. Res. Not. \textbf{2002} (2002), 1077--1109.


\bibitem{Bi}P. Bialek, \begin{it}Ramanujan's formulas for the coefficients in the power series expansions of certain modular forms\end{it}, Ph. D. thesis, University of Illinois at Urbana--Champaign, 1995.
\bibitem{123} J. Bruinier, G. van der Geer, and D. Zagier, The 1-2-3 of modular forms, Universitext, Springer-Verlag, Berlin, 2008.
\bibitem{ChowlaSelberg} S. Chowla and A. Selberg, \begin{it}On Epsteins zeta function\end{it}, Proc. Natl. Acad. Sci. (USA) \textbf{35} (1949), 371--374.
 \bibitem{HR1} G.  Hardy and S. Ramanujan, \emph{Une formule asymptotique pour le nombre des partitions de $n$},  Collected papers of Srinivasa Ramanujan,  239--241, AMS Chelsea Publ., Providence, RI, 2000.

\bibitem{HR2} G. Hardy and S. Ramanujan, \emph{Asymptotic formulae in combinatory analysis} Proc. London Math. Soc. \textbf{16} (1917), in Collected papers of Srinivasa Ramanujan, AMS Chelsea Publ, Providence, RI, 2000, 244.

\bibitem{HR3} G. Hardy and S. Ramanujan, \emph{On the coefficients in the expansions of certain modular functions}. Proc. Royal Soc. A \textbf{95} (1918), 144--155.

\bibitem{Le} J. Lehner, \begin{it}The Fourier coefficients of automorphic forms on horocyclic groups III\end{it}, Mich. Math J. \textbf{7} (1960), 65--74.


\bibitem{Pe1} H. Petersson, \begin{it}Konstruktion der Modulformen und der zu gewissen Grenzkreisgruppen geh\"origen automorphen Formen von positiver reeller Dimension und die vollst\"andige Bestimmung ihrer Fourierkoeffzienten\end{it}, S.-B. Heidelberger Akad. Wiss. Math. Nat. Kl. (1950), 415--474.

\bibitem{Rad} H. Rademacher, \emph{On the expansion of the partition function in a series}, Ann. of Math. \textbf{44} (1943), 416--422.

\bibitem{RZ} H. Rademacher and H.  Zuckerman,
\emph{On the Fourier coefficients of certain modular forms of positive dimension},  Ann. of Math.   \textbf{39} (1938),  433--462.

\bibitem{Ra} S. Ramanujan, \begin{it}On certain arithmetical functions,\end{it}, Trans. Cambridge Philos. Soc. \textbf{22} (1916), 159--184.

\bibitem{RaLost} S. Ramanujan, \begin{it}The lost notebook and other unpublished paper\end{it}, Narosa, New Delhi, 1988.

\bibitem{Zu1} H.  Zuckerman, \emph{On the coefficients
of certain modular forms belonging to subgroups of the modular group}, Trans. Amer. Math. Soc., \textbf{45} (1939), 298--321.

\bibitem{Zu2} H.  Zuckerman, \emph{On the expansions of certain modular forms of positive dimension}, Amer. J. Math. \textbf{62} (1940),  127--152.
\end{thebibliography}
\end{document}